\newcommand{\nA}{\mathbb{A}}
\newcommand{\nc}{\mathbb{C}}
\newcommand{\np}{\mathbb{P}}
\newcommand{\nr}{\mathbb{R}}
\newcommand{\nz}{\mathbb{Z}}
\newcommand{\B}{\mathbf{B}}
\newcommand{\E}{\mathbf{D}}
\newcommand{\M}{\mathbf{M}}
\newcommand{\X}{\mathbf{X}}
\newcommand{\ad}{\mathrm{ad}}
\newcommand{\Ker}{\mathrm{Ker}}
\newcommand{\id}{\mathrm{id}}
\newcommand{\Res}{\mathrm{Res}}
\newcommand{\Supp}{\mathrm{Supp}\,}
\renewcommand{\mod}{\,\mathrm{mod}\,}
\newtheorem{thm}{Theorem}%[section]
\newtheorem{lemma}{Lemma}%[section]
\newtheorem{cor}{Corollary}%[section]
\newtheorem{ack}{Acknowledgment}
\newcommand{\fg}{\mathfrak{g}}
\newcommand{\fz}{\mathfrak{z}}
\newcommand{\fS}{\mathfrak{S}}
\newcommand{\fZ}{\mathfrak{Z}}
\newcommand{\fsl}{\mathfrak{sl}}
\renewcommand{\div}{\mathrm{div}}
\newcommand{\Hom}{{\mathrm {Hom}}}
\newcommand{\Spec}{\mathrm{Spec}}
\newcommand{\bW}{\mathbf{W}}
\newcommand{\cD}{\mathcal{D}}
\newcommand{\cG}{\mathcal{G}}
\newcommand{\cH}{\mathcal{H}}
\newcommand{\cN}{\mathcal{N}}
\newcommand{\cP}{\mathcal{P}}
\newcommand{\cS}{\mathcal{S}}
\renewcommand{\d}{\mathit{d}}
\newcommand{\D}{\dfrac{d}{dz}}
\font\rom cmr12
\font\small cmr10
\font\smit cmti10
\begin{document}

\title{A Global version of Grozman's theorem}
\author{Kenji IOHARA and Olivier MATHIEU}
\maketitle

\begin{abstract}
Let $X$ be a manifold. 
The classification of all equivariant bilinear
maps between tensor density modules over $X$ has been investigated by
Yu. Grozman \cite {G}, who has provided a full classification for those which are 
{\smit differential} operators. Here we investigate the same question
without the hypothesis that the maps are differential operators. 
In our paper, the geometric context is algebraic geometry and the
manifold $X$ is the circle $\Spec\, \nc[z,z^{-1}]$.

Our main motivation comes from the fact that such a classification
is required to complete the proof of the main result of \cite{IM}. 
Indeed it requires to also include the case of deformations
of tensor density modules. 
\end{abstract}

\setcounter{tocdepth}{1}
\tableofcontents

\setcounter{section}{-1}

\section{Introduction}

The  introduction is organized as follows.
The first section is devoted to the main definitions and the statement 
of  the Grozman Theorem. In the second section,  our result
is stated. In  the last section, the main
ideas of the proof are explained.

\subsection{Grozman's Theorem:}\label{sect_0.1}
Let $X$ be a manifold of dimension $n$, let $W_X$ be the Lie algebra of
vector fields over $X$ and let $M,\,N$ and $P$ be three
tensor density modules over $X$. The precise meaning of
tensor density module will be clarified later on and the geometric 
context 
(differential geometry, algebraic geometry, 
$\dots$) is not yet precised.

In a famous paper \cite{G},
Yu. Grozman has classified all bilinear differential
operators $\pi:M\times N\rightarrow P$ which are 
$W_X$-equivariant. Since differential operators are
local \cite{P}, it is enough to consider the case of the
formal geometry, namely
$X=\Spec\, \nc[[z_1,\dots,z_n]]$. 
The most intersting and difficult part of Grozman's theorem involves the
case where $\dim X=1$, indeed the general case follows from 
this case. 

Therefore, we will now assume that  $X=\Spec\, \nc[[z]]$. For this
manifold, the tensor density modules are the modules 
$\Omega^\delta$, where the parameter 
$\delta$ runs over $\nc$. 
As a $\nc[[z]]$-module, $\Omega^\delta$ is 
a rank one free module whose generator is denoted by
$(dz)^\delta$.
The structure
of $W_X$-module on $\Omega^\delta$ is described by the following
formula:

$$\xi.[f.(dz)^\delta]
=(\xi.f+\delta f\div(\xi)).(dz)^\delta$$

\noindent
for any $f\in \nc[[z]]$ and $\xi\in W_X$, where,
as usual, 
$\xi.f=gf'$, $\div( \xi)=g'$ whenever
$\xi=g \D$ for some $g\in \nc[[z]]$. When $\delta$
is a non-negative integer, 
$\Omega^\delta$ is the space $(\Omega^1_X)^{\otimes\delta}$, where 
$\Omega^1_X$ is the space of 
K\"{a}lher differential of $X$. 

The space $\oplus_{\delta}\,\Omega^\delta$  can be realized as the space
of symbols of twisted  pseudo-differential operators on the circle
(see e.g. \cite{IM}, {\it twisted} means that complex powers of 
$\D$ are allowed) and therefore
it carries a structure of 
Poisson algebra. The Poisson structure (a commutative product $P$ and a Lie
bracket $B$) induces   two series of $W_X$-equivariant bilinear maps, namely the maps
$P^{\delta_1,\delta_2}:\Omega^{\delta_1}\times \Omega^{\delta_2}
\rightarrow \Omega^{\delta_1+\delta_2}$ and
the map
$B^{\delta_1,\delta_2}:\Omega^{\delta_1}\times \Omega^{\delta_2}
\rightarrow \Omega^{\delta_1+\delta_2 +1}$. These
operators are explicitly defined by:

$P^{\delta_1,\delta_2}(f_1.(dz)^{\delta_1},f_2.(dz)^{\delta_2})=f_1f_2
(dz)^{\delta_1+\delta_2}$

$B^{\delta_1,\delta_2}(f_1.(dz)^{\delta_1},f_2.(dz)^{\delta_2})=(
\delta_2f_1'f_2-\delta_1f_1f_2') (dz)^{\delta_1+\delta_2+1}$

Moreover, the de Rham operator is a $W_X$-equivariant map
$\d:\Omega^0\rightarrow\Omega^1$. So we can obtained additional
$W_X$-equivariant bilinear  maps between
tensor density module by various compositions of
$B^{\delta_1,\delta_2}$ and $P^{\delta_1,\delta_2}$ with $\d$. 
An example is provided by the map  
$B^{1,\delta}\circ (\d\times id):\Omega^0\times \Omega^\delta
\rightarrow \Omega^{\delta+2}$.
Following Grozman, 
the {\it classical} $W_X$-equivariant bilinear maps are (the linear
combinations of) the maps $B^{\delta_1,\delta_2}, P^{\delta_1,\delta_2}$, and those obtained 
by various compositions with $\d$. 

Grozman discovered one additional $W_X$-equivariant bilinear map,
namely Grozman's operator
$G: \Omega^{-2/3}\times \Omega^{-2/3}
\rightarrow \Omega^{5/3}$ defined by the formula:

$G(f_1.(dz)^{-2/3},f_2.(dz)^{-2/3})=
[2(f_1'''f_2-f_2'''f_1)+3(f_1''f_2'-f_1'f_2'')](dz)^{5/3}.
$

With this, one can state Grozman's result:

\bigskip
{\bf Grozman Theorem.}
{\it Any differential $W_X$ -equivariant bilinear map 
$\pi:\Omega^{\delta_1}\times \Omega^{\delta_2}
\rightarrow \Omega^{\gamma}$ 
between tensor density modules is either classical,
or it is a scalar multiple of the Grozman operator.}

\bigskip
\subsection{The result of the present paper:}\label{sect_0.2}
In this paper, a similar question is investigated,
namely the determination of all  $W_X$-equivariant
bilinear maps $\pi:M\times N\rightarrow P$
between tensor density modules, without the hypothesis that 
$\pi$ is a differential operator. Since differential
operators are local, we will establish a 
global  (=non-local) version of Grozman Theorem.

For this purpose, we will make new hypotheses. 
From now on, the context is the algebraic geometry,
and the manifold $X$ of investigation is the {\it circle},
namely $\nc^\ast=\Spec \;\nc[z,z^{-1}]$. Set $\bW=W_X$.
Fix two parameters $\delta, s\in \nc$
and set $\rho_{\delta,s}(\xi)=
\xi+\delta\,\div\xi + i_{\xi}\alpha_s$ for any $\xi\in \bW$,
where $\alpha_s=s z^{-1} \d z$. 
By definition,  $\Omega^\delta_s$ is the $\bW$-module
whose underlying space is $\nc[z,z^{-1}]$ and the action
is given by $\rho_{\delta,s}$. To describe more naturally
the action $\rho_{\delta,s}$, it is convenient to denote
by the symbol $z^s (\d z)^\delta$ the generator of this module,
and therefore the expressions $(z^{n+s} (\d z)^\delta)_{n\in\nz}$
form a basis of  $\Omega^\delta_s$. It follows
easily that $\Omega^\delta_s$ and $\Omega^\delta_u$ are
equal if $s-u$ is an integer. Therefore,
we will consider the parameter $s$ as an element
of $\nc/\nz$.   

 We will not provide a rigorous and general  definition of the 
{\it tensor density modules} (see e.g. \cite{M2}). Just say
that the  {\it tensor density modules}
considered here are the $\bW$-modules $\Omega^\delta_{u}$, where $(\delta,s)$ runs over
$\nc\times \nc/\nz$. 

As before, there are $\bW$-equivariant bilinear maps
$P^{\delta_1,\delta_2}_{u_1,u_2}:\Omega^{\delta_1}_{u_1}\times \Omega^{\delta_2}_{u_2}
\rightarrow \Omega^{\delta_1+\delta_2}_{u_1+u_2}$ and
$B^{\delta_1,\delta_2}_{u_1,u_2}:\Omega^{\delta_1}_{u_1}\times \Omega^{\delta_2}_{u_2}
\rightarrow \Omega^{\delta_1+\delta_2 +1}_{u_1+u_2}$, as well
as the de Rham differential
$\d:\Omega^{0}_u\rightarrow\Omega^{1}_u$. There is also a
map $\rho:\Omega^{1}_u\rightarrow\Omega^{0}_u$, which is defined as
follows. For  $u\not\equiv  0\, \mod \nz$, the opeartor $d$ is invertible and
set $\rho=\d^{-1}$. For  $u \equiv 0\, \mod \nz$,
denote by $\rho:\Omega^{1}_u\rightarrow\Omega^{0}_u$
the composite of the residue map
$\Res:\Omega^{1}_0\rightarrow \nc$ and the natural map
$\nc \rightarrow\Omega^{0}_0 =\nc [z,z^{-1}]$. 
By definition, a {\it classical} bilinear map between tensor
density modules over the circle is any linear combination of the operators 
$B^{\delta_1,\delta_2}_{u_1,u_2}$,
$P^{\delta_1,\delta_2}_{u_1,u_2}$ and those obtained by composition with   
$\d$ and $\rho$. An example  of a classical operator is
$\rho\circ P: \Omega^{\delta}_{u_1}\times \Omega^{1-\delta}_{u_2}
\rightarrow \Omega^{0}_{u_1+u_2}$.

Of course, the Grozman operator
provies  a family of non-classical operators
$G_{u,v}: \Omega^{-2/3}_{u}\times \Omega^{-2/3}_{v}
\rightarrow \Omega^{5/3}_{u+v}$. A {\it trivial operator} is 
a scalar multiple of the bilinear map
$\Omega^1_0\times\Omega^1_0 \rightarrow \Omega^0_0, 
(\alpha,\beta)\mapsto\Res(\alpha)\Res(\beta)$.
There is also another
non-classical $\bW$-equivariant  operator 
$\Theta_{\infty}:\Omega^1_0\times
\Omega^1_0\rightarrow \Omega^0_0$ which satisfies:

$\d \Theta_{\infty}(\alpha,\beta)=\Res(\alpha)\beta-\Res(\beta)\alpha$

\noindent for any $\alpha,\beta\in \Omega^1_0$. Indeed $\Theta_{\infty}$ is unique modulo a
trivial operator.

Our result is the following:

\bigskip

{\bf Theorem:} {\it (restricted version) Let $X$ be the circle. Any 
$\bW$-equi\-va\-riant bilinear map between tensor density module is either
classical, or it is a scalar multiple of $G_{u,v}$  or 
of $\Theta_{\infty}$ (modulo a trivial operator).}

\bigskip
 In the paper, a more general version, which also involves deformations of tensor 
density modules, is proved. Set
$L_0=z\dfrac{\d}{\d z}$. For a $\bW$-module
$M$  and $s\in \nc$, set 
$M_s=\Ker (L_0-s)$.
Let $\cS$ be the class of
all $\bW$ modules $M$ which satisfies the following condition:
there exists  $u\in \nc/\nz$ such that

\centerline{$M=\oplus_{s\in u}\,M_s$ and
$\dim M_s=1$ for all $s\in u$.}

\noindent The $\nz$-coset $u$ is called the {\it support of $M$}, and
it is denoted by $\Supp\,M$. 

It turns out that all modules of
the class $\cS$ have been classified by Kaplansky and Santharoubane
\cite{KS})] and, except deformations of 
$\Omega^0_0$ and $\Omega^1_0$, all modules of the class $\cS$
are  tensor density modules. Our full result is the classification of
all $\bW$-equi\-va\-riant bilinear maps between 
modules of the class $\cS$.

\subsection{About the proofs:}\label{sect_0.3}

In order to describe the proof and the organization of the paper,
it is necessary to introduce the notion of germs of bilinear maps.

For any three vector spaces $M,\,N$ and $P$, denote
by $\B(M\times N,P)$  the space of bilinear maps
$\pi:M\times N\rightarrow P$. Assume now that
$M,\,N$ and $P$ are $\bW$-modules of the class $\cS$.
For $x\in{\bf R}$, set
$M_{\geq x}=\oplus_{\Re s\geq x}\, M_s$ and
$N_{\geq x}=\oplus_{\Re s\geq x}\, N_s$. By definition,
a {\it germ} of bilinear map from $M\times N$ to $P$
is an element of the space 
$\cG(M\times N,P):=\lim\limits_{x\rightarrow +\infty}
\,\B(M_{\geq x}\times N_{\geq x},P)$. 

It turns out that  $\cG(M\times N,P)$ is a $\bW$-module.
Denote by $\B_{\bW}(M\times N,P)$  the space of 
$\bW$-equivariant bilinear maps
$\pi:M\times N\rightarrow P$, by 
 $\B^0_{\bW}(M\times N,P)$  the subspace of all
$\pi\in \B_{\bW}(M\times N,P)$ whose germ is zero
and by  $\cG_{\bW}(M\times N,P)$ the space of $\bW$-equivariant 
germs of bilinear maps from $M\times N$ to $P$.

There is a short exact sequence:

\centerline{$0\rightarrow \B^0_{\bW}(M\times N,P)
\rightarrow \B_{\bW}(M\times N,P)
\rightarrow \cG_{\bW}(M\times N,P)$.}

\noindent The paper contains three parts

Part 1 which determines $\B^0_{\bW}(M\times N,P)$, see Theorem 1,

Part 2 which determines $\cG_{\bW}(M\times N,P)$, see Theorem 2,

Part 3 which determines \\
\centerline{$\B_{\bW}(M\times N,P)
\rightarrow \cG_{\bW}(M\times N,P)$,}

\noindent see Theorem 3.

Part 1
is discussed in Section \ref{sect_Thm1}. The  map $\Theta_{\infty}$ is an example of a degenerate map.

Part 2 is the main difficulty of the paper. One checks that
$\dim \cG_{\bW}(M\times N,P)\leq 2$.  So 
it is enough to determine when $\cG_{\bW}(M\times N,P)$ is non-zero
and when   $\dim \cG_{\bW}(M\times N,P)= 2$. 
We will now explain our approach.

The {\it degree}  of the  modules of the class $\cS$ is a 
multivalued function defined as follows. If
$M=\Omega^{\delta}_s$ for some $\delta\neq 0$ or $1$,  set
$\deg M=\delta$. Otherwise, set $\deg M=\{0,1\}$. 
Next, let
 $M, \, N$ and $P\in\cS$ with
$\delta_1\in\deg M$, $\delta_2\in \deg N$ and 
$\gamma\in \deg P$. We can assume that 
$\Supp\,P=\Supp\,M+\Supp\,N$, since otherwise 
$\cG_{\bW}(M\times N,P)$ would be obviously zero.

We introduce a $6$ by $6$ matrix 
$\M=(m_{i,j}(\delta_1,\delta_2,\gamma,x,y))_{1\leq i,j\leq 6}$ whose entries
are quadratic polynomials in the five variables $\delta_1,\delta_2,\gamma,x,y$
and which satisfies the following property:

\centerline{$\det \M=0$ for all $x, y$
if $\cG_{\bW}(M\times N,P)\neq 0$.} 

Set 
$\det \M=\sum_{i,j}\,p_{i,j}(\delta_1,\delta_2,\gamma) x^iy^j$
and let $\fZ$ be the common set of zeroes of
all polynomials $p_{i,j}$. 
Half of the entries of $\M$ are zero and only $16$
of the $720$ diagonals of $\M$  give a non-zero contribution
for $\det\M$. However  a human  computation
looks too complicated, because each non-zero entry of $\M$ is a linear combination
of  $9$ or $10$ distinct monomials. The computation of  the polynomials $p_{i,j}$
has been done with MAPLE.

As expected, $p_{1,3}$ and $p_{3,1}$ are degree eight polynomials.
It turns out that each of them is a product of $6$ degree one factors and one quadratic factor.
Indeed 4 degree one factors are obvious and the rest of
the factorizations look miraculous.
Moreover the two (suitably normalized) quadratic factors differ by 
a linear term. It follows that the common zero set of $p_{1,3}$ and $p_{3,1}$
is a union of affine planes, affine lines and some planar quadrics. 
This allows to explicitely solve the equations $p_{i,j}=0$. 
Since only the polynomials $p_{1,3}, p_{3,1}$ and $p_{2,2}$ are needed,
the other polynomials $p_{i,j}$ are listed in Appendix A.
It turns out that $\fZ$ decomposes into four planes,  eight lines and four points. 

Using an additional trick,
we determine when  $\cG_{\bW}(M\times N,P)\neq 0$, and when
$\dim \cG_{\bW}(M\times N,P)=2$. 
Although its proof is the main difficulty 
of the paper, the statement of Theorem 2 is very simple.
Indeed $\cG_{\bW}(M\times N,P)$ is non-zero exactly when  
$(\delta_1,\delta_2,\gamma)$ belongs to an explicit algebraic subset $\fz$ of $\fZ$ consisting
of two planes, six lines and five points. Moreover, it   has dimension two
iff  $\{\delta_1,\delta_2,\gamma\} \subset\{0,1\}$).

Theorem 3 determines which  germs in $\cG_{\bW}(M\times N,P)$ can be lifted to a
$\bW$-equivariant bilinear map. Each particular case is easy,  but the list  is
very long. Therefore Theorem 3 has been split into Theorem 3.1 and Theorem 3.2, corresponding to the case  where $\cG_{\bW}(M\times N,P)$ has dimension one or two.

It should be noted that all $\bW$-modules of the class $\cS$ are indecomposable except one,
namely $\overline A\oplus \nc$, where $\overline A=\nc[z,z^{-1}]/\nc$. In most statements
about bilinear maps $\pi:M\times N\rightarrow P$, we assume that $M$, $N$ and $P$ are
indecomposable. Indeed the case where some modules are decomposable follows easily. The indecomposability hypothesis removes many less interesting cases. This is
helpful since some statements already contain many particular cases, e.g. Theorem 3.1 
contains 16 of them.

\begin{ack}
We would like to thank our colleague J\'er\^ome Germoni for his computation
of the determinant $\det M$ with the aid of MAPLE.
\end{ack}

\section{The Kaplansky-Santharoubane Theorem}\label{KS_Theorem}
The {\it Witt algebra} $\bW$ is the Lie algebra of derivations of the Laurent 
polynomial ring $A =\nc[z,z^{-1}]$. 
Clearly the elements $L_n=z^{n+1}\dfrac{d}{dz}$, where $n$ runs over $\nz$,
form a basis of $\bW$ and we have
 
\noindent\centerline{$ [L_m, L_n]=(n-m)L_{m+n}$.}

\noindent Throughout this paper,  $\fsl(2)$ refers to its
subalgebra

\noindent\centerline{$\nc L_{-1}\oplus\nc L_0\oplus\nc L_1$.}

\subsection{Statement of the theorem}\label{sect_KS}
For a $\bW$-module $M$, set $M_z=\{m \in M \vert L_0.m=zm\}$ for any $z \in
\nc$ and define its {\it support} as the set $\Supp M=\{z \in \nc
\vert M_z\neq 0\}$.

Let $\cS$ be the class of all $\bW$-modules $M$
such that 

(i) $M=\oplus_{z \in \nc} M_z$,

(ii) $\Supp M$ is exactly one $\nz$-coset, and

(ii) $\dim M_z=1$ for all $z \in \Supp (M)$.

\noindent Here are three families  of modules of the class $\cS$:

\begin{enumerate}
\item The family of  {\it tensor density modules} $\Omega_{u}^\delta$,
where $(\delta,u)$ runs over $\nc\times \nc/\nz$. Here
$\Omega_{u}^\delta$ is the $\bW$-module with basis 
$(e_z^\delta)_{z\in u}$ and  action given by the formula:

\hskip3cm $ L_m.e_z^\delta=(m\delta+z)e_{z+m}^\delta.$

\item 
The {\it $A$-family} $(A_{a,b})_{(a,b)\in\nc^2}$. Here 
$A_{a,b}$ is  the $\bW$-module with basis 
$(e_n^A)_{n\in\nz}$ and action given by the formula:

\hskip3cm$ L_m.e_n^A=\begin{cases} (m+n)e_{m+n}^A \qquad & n \neq 0, \\
 (am^2+bm)e_m^A \qquad &n=0. \end{cases} $

\item 
The {\it $B$-family} $(B_{a,b})_{(a,b)\in\nc^2}$. Here $B_{a,b}$ is the
$\bW$-module with basis $(e_n^B)_{n\in\nz}$ and action given by the
formula:

\hskip3cm $L_m.e_n^B=\begin{cases} ne_{m+n}^B \qquad &n+m \neq 0, \\
(am^2+bm)e_0^B \qquad & n+m=0. \end{cases}$
\end{enumerate}

Set $\overline{A}:=A/\nc$. There are two exact sequences:

\noindent\centerline{
$0 \longrightarrow  \overline{A} \longrightarrow A_{a,b} 
{\longrightarrow} \nc \longrightarrow 0$,  and}

\noindent\centerline{
$0 \longrightarrow   \nc {  \longrightarrow} B_{a,b} \longrightarrow
\overline{A}
\longrightarrow 0$,}

\noindent and we denote by $\Res: A_{a,b}\rightarrow\nc$  the  map defined by
$\Res\, e_0^A=1$ and  $\Res\, e_n^A=0$ if $n\neq 0$.
These exact sequences
do not split, except for
$(a,b)=(0,0)$. Therefore the $A$-family is a deformation of
$\Omega^1_0\simeq A_{0,1}$ and the $B$-family is a deformation of of 
$\Omega^0_0\simeq B_{0,1}$.
Except the previous two isomorphisms and the obvious
$A_{0,0}\cong B_{0,0} \cong \overline{A} \oplus \nc$, there are some repetitions in  the 
previous
list due to the following isomorphisms:

\begin{enumerate}

\item the de Rham differential $d:\Omega_{u}^0 \rightarrow \Omega_{u}^1$, 
if $u\not\equiv 0$ mod $\nz$,

\item $A_{\lambda a, \lambda b} \cong A_{a,b}$ and 
$B_{\lambda a, \lambda b} \cong B_{a,b}$ for $\lambda \in \nc^\ast$, 

\end{enumerate}

\noindent 
There are no other isomorphism in the class $\cS$ beside those previously indicated. 
 From now on, we will consider the couples $(a,b)\neq (0,0)$
as a projective coordinate, and  the 
indecomposable modules in the $AB$-famillies are now parametrized by
$\np^1$. Set  $\infty=(0,1)$ and
$\nA^1=\np^1\setminus \infty$. Therefore the indecomposable $\bW$-modules in 
the previous list, which are not tensor density modules, are the two $\nA^1$-parametrized
families  $(A_\xi)_{\xi\in\nA^1}$ and $(B_\xi)_{\xi\in\nA^1}$,
as in \cite{MP}'s paper.

 The classification of the $\bW$-modules of the class $\cS$ was 
given by I. Kaplansky and L. J. Santharoubane \cite{KS}, \cite{K} (with
a minor correction in  \cite{MP} concerning the parametrization of the
$AB$-families):

\bigskip
{\bf Kaplansky-Santharoubane Theorem.}
{\it Let $M$ be a $\bW$-module of the class $\cS$.
\begin{enumerate}
\item If $M$ is irreducible, then there exists
$(u, \delta) \in \nc/\nz \times \nc$, with $(u,\delta)\neq (0,0)$ or $(0,1)$ such that 
$M \simeq\Omega_{u}^\delta$.

\item If $M$ is reducible and indecomposable, then $M$ is isomorphic to either 
$A_{\xi}$ or $B_{\xi}$ for some $\xi \in \np^1$. 

\item Otherwise, $M$ is isomorphic to $\overline{A}\oplus \nc$.
\end{enumerate}
}

\subsection{Degree of the modules in the class $\cS$}
It follows from the previous remark that one can define 
{\it the degree} $\deg M$ for any $M \in \cS$ as follows:

\begin{enumerate}
\item $\deg M=\delta$ if $M \cong \Omega_s^\delta$ for some 
$\delta \in \nc \setminus \{0,1\}$, and

\item $\deg M=\{0,1\}$ otherwise.
\end{enumerate}

By definition, the degree is a multivalued function. We also define
{\it a degree} of $M$ as a value $\delta \in \deg M$. Let $\cS^\ast$
be the class of all pairs $(M,\delta)$, where $M\in \cS$ and $\delta \in \deg
M$. A pair $(M,\delta) \in \cS^\ast$  will be often simplified as $M$ and
set $\deg M:=\delta$. So, the degree function is a single valued function
on $\cS^\ast$. Usually,  we consider 
$\Omega_s^\delta$ as the element $(\Omega_s^\delta,\delta)$ of $\cS^\ast$
for any $\delta$.

For $M\in\cS$,  let $M^\ast$ be its restricted dual, namely
$M^\ast=\oplus_{x \in \nc} M_x^\ast$. 
By definition, the class $\cS$ is stable by the restricted duality and  
we have:

\begin{lemma}\label{lemma_dual-density} 
$(\Omega_u^\delta)^\ast \cong \Omega_{-u}^{1-\delta}$ and $(A_{\xi})^\ast \cong
B_{\xi}$.
\end{lemma}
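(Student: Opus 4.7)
The plan is to compute the dual action explicitly on dual bases, reindex by weight, and match the resulting formulas with the definitions of $\Omega^\delta_u$ and $B_{a,b}$ given in Section \ref{sect_KS}. Throughout, the restricted dual carries the standard action $(L_m.\phi)(v)=-\phi(L_m.v)$, so that $\phi\in M^\ast_x$ precisely when $\phi$ vanishes on $M_y$ for $y\ne -x$; in particular $\Supp\,M^\ast=-\Supp\,M$.

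For the first isomorphism, let $(e^\delta_z)_{z\in u}$ be the basis of $\Omega^\delta_u$ and let $(f_z)_{z\in u}$ be the dual basis. From $L_m.e^\delta_z=(m\delta+z)e^\delta_{z+m}$ one computes
\[
(L_m.f_z)(e^\delta_w)=-(m\delta+w)\,\delta_{z,w+m},
\]
so $L_m.f_z=-(m\delta+z-m)\,f_{z-m}=\bigl(m(1-\delta)-z\bigr)f_{z-m}$. Setting $g_y:=f_{-y}$ for $y\in -u$ (so that $g_y$ has weight $y$), this becomes $L_m.g_y=\bigl(m(1-\delta)+y\bigr)g_{y+m}$, which is exactly the defining formula for $\Omega^{1-\delta}_{-u}$. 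Hence the linear map sending $e^{1-\delta}_y$ to $g_y$ is the desired $\bW$-module isomorphism $(\Omega^\delta_u)^\ast\cong \Omega^{1-\delta}_{-u}$.

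For the second isomorphism, write $(e^A_n)_{n\in\nz}$ for the basis of $A_{a,b}$ and $(f_n)$ for the dual basis. A direct case analysis gives
\[
(L_m.f_n)(e^A_k)=\begin{cases} -n\,\delta_{k,n-m} & \text{if } m\ne n,\\
-(an^2+bn)\,\delta_{k,0} & \text{if } m=n,
\end{cases}
\]
so $L_m.f_n=-n\,f_{n-m}$ when $m\ne n$ and $L_n.f_n=-(an^2+bn)\,f_0$. Reindexing by weight via $g_k:=f_{-k}$, this rewrites as $L_m.g_k=k\,g_{m+k}$ when $m+k\ne 0$, and $L_m.g_{-m}=-(am^2+bm)\,g_0$ when $m+k=0$. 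Comparing with the definition of the $B$-family, we obtain $(A_{a,b})^\ast\cong B_{-a,-b}$. Finally, using the isomorphism $B_{\lambda a,\lambda b}\cong B_{a,b}$ (with $\lambda=-1$) recalled in Section \ref{sect_KS}, we conclude $(A_\xi)^\ast\cong B_\xi$.

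The content of the lemma is entirely computational; the only subtle point is the sign $(a,b)\mapsto(-a,-b)$ produced by dualization in the $A$-case, which is then absorbed by the projective normalization of the parameter $\xi\in\np^1$. No significant obstacle is expected.
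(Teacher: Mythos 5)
Your proof is correct: the weight bookkeeping, the computation $L_m.f_z=(m(1-\delta)-z)f_{z-m}$, and the identification $(A_{a,b})^\ast\cong B_{-a,-b}\cong B_{a,b}$ via the scaling isomorphism with $\lambda=-1$ all check out against the definitions in Section \ref{sect_KS}. The paper states this lemma without proof, and your direct computation on dual bases — including correctly spotting and absorbing the sign flip $(a,b)\mapsto(-a,-b)$ into the projective parametrization of $\xi$ — is precisely the intended verification.
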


In particular, it follows that $\deg M^\ast=1-\deg M$ for any 
$M\in \cS$.

\section{Germs and bilinear maps}

\subsection{On the terminology `$\fg$-equivariant'}

Throughout the whole paper, we will use the following convention.
Let $\fg$ be a Lie algebra and let $E$ be a $\fg$-module.
When $E$ is a space of maps, a $\fg$-invariant element of
$E$ will be called {\it $\fg$-equivariant}. We will use the same convention
for spaces of germs of maps (see the definition below).

When $\fg$ is the one-dimensional Lie algebra $\nc.L$, we will use the
terminology {\it $L$-invariant} and {\it $L$-equivariant} instead of
$\fg$-invariant and $\fg$-equivariant.

\subsection{Weight modules and the $\fS_3$-symmetry}

 A {\it $\nc$-graded vector space} is a vector space $M$ endowed with a decomposition
$M=\oplus_{z\in\nc} M_z$ such that $\dim M_z<\infty$ for all
$z\in \nc$. Denote by $\cH$  the category of all
$\nc$-graded vector spaces. It is convenient
to denote by $L_0$ the degree operator, which acts as $z$ on
$M_z$. Given $M,N\in{\cal H}$, we denote by
$\Hom_{L_0}(M,N)$ the space of $L_0$-equivariant linear maps from $\phi:M\rightarrow N$.
Equivalently $\Hom_{L_0}(M,N)$ is the space of maps in the category ${\cal H}$.

By definition, a {\it Lie $L_0$-algebra} is a pair $(\fg,L_0)$, where  
$\fg=\oplus_{z\in\nc}\,\fg_z$ is a $\nc$-graded Lie algebra,  
$L_0$ is an element of $\fg_0$ such that $\ad(L_0)$ acts as the degree operator.  
 A {\it weight} $\fg$-module is a 
$\nc$-graded vector space $M$ endowed with a 
structure of $\fg$-module.
Of course it is required that $L_0$
acts as the degree operator on $M$ and therefore we have
$\fg_y.M_z\subset M_{y+z}$ for all $y,\,z\in\nc$. 
Let $\cH_{\fg}$ be the category of weight $\fg$-modules.
For $M$ and $N$ in $\cH_{\fg}$, denote by $\Hom_{\fg}(M,N)$
the space of $\fg$-equivariant linear maps from $M$ to $N$.

Given $M,\,N$ and $P$ in $\cH$, denote by $\B(M\times N,P)$ the space 
of all bilinear maps $\pi:M\times N\rightarrow P$ and by 
$\B_{L_0}(M\times N,P)$ the subspace of $L_0$-equivariant bilinear maps.
Similarly if $M,\,N$ and $P$ are in $\cH_{\fg}$, 
denote by $\B_{\fg}(M\times N,P)$
the space of $\fg$-equivariant bilinear maps.

For $P\in\cH$, denote by $P^*$  its restricted dual. By definition, we have
$P^*=\oplus_{z\in \nc}\, P^*_z$, where $P^*_z=(P_{-z})^*$.

\begin{lemma}\label{lemma_symmetry-bilinear}
Let $M,\,N$ and $P$ in ${\cH}_{\fg}$. We have:

\noindent
\centerline{$\B_{\fg}(M \times N, P^\ast)  \simeq \B_{\fg}(M \times P, N^\ast)$.} 
\end{lemma}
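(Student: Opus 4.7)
The plan is to construct the isomorphism explicitly by passing to an auxiliary trilinear pairing. Given $\pi\in\B_{\fg}(M\times N,P^*)$, set
\[
T_{\pi}(m,n,p):=\langle\pi(m,n),p\rangle,
\]
and define $\Phi(\pi)\colon M\times P\to N^*$ by $\Phi(\pi)(m,p)(n):=T_{\pi}(m,n,p)$. The goal is to show that $\Phi$ is a well-defined bijection $\B_{\fg}(M\times N,P^*)\to\B_{\fg}(M\times P,N^*)$. The whole strategy rests on the observation that $T_{\pi}$ is symmetric in the three arguments under interchange, so swapping the roles of $N$ and $P$ is immediate on the level of $T$; the only things to check are that this operation lands in the correct spaces and preserves $\fg$-equivariance.

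The first point, and the only delicate one, is to verify that $\Phi(\pi)(m,p)$ lies in the restricted dual $N^*=\oplus_{z}(N_{-z})^*$ rather than in the full linear dual of $N$. This is forced by $L_0$-equivariance, which follows from $\fg$-equivariance since $L_0\in\fg$. Concretely, for $m\in M_a$ and $n\in N_b$ one has $\pi(m,n)\in(P^*)_{a+b}=(P_{-a-b})^*$; hence, for fixed $m\in M_a$ and $p\in P_c$, we have $T_{\pi}(m,n,p)=0$ whenever $n\in N_b$ with $b\neq -a-c$. Thus $\Phi(\pi)(m,p)$ is concentrated in the single finite-dimensional weight space $(N_{-a-c})^*=(N^*)_{a+c}$ and indeed belongs to $N^*$. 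Bilinearity and $L_0$-equivariance of $\Phi(\pi)$ are then immediate.

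Next, $\fg$-equivariance of $\Phi(\pi)$ translates, via the contragredient action $(x.\phi)(v)=-\phi(x.v)$, into the identity
\[
T_{\pi}(x.m,n,p)+T_{\pi}(m,x.n,p)+T_{\pi}(m,n,x.p)=0,\qquad x\in\fg,
\]
which is manifestly symmetric in the three arguments, and is itself equivalent to the $\fg$-equivariance of $\pi$. The same symmetry makes the construction involutive: the analogous map $\Psi\colon\B_{\fg}(M\times P,N^*)\to\B_{\fg}(M\times N,P^*)$, together with the natural identifications $N\simeq N^{**}$ and $P\simeq P^{**}$ (valid for restricted duals because all weight spaces are finite dimensional), provides an inverse to $\Phi$. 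The potential obstacle was the restricted-dual check in the middle paragraph, but as seen it is automatic from the weight-module hypothesis.
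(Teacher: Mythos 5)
Your proof is correct and is essentially the paper's argument written out in detail: the trilinear form $T_{\pi}$ is exactly the element of $\prod_{u+v+w=0}M^*_u\otimes N^*_v\otimes P^*_w$ that the paper's one-line proof identifies with $\pi$, and the manifest $\fS_3$-symmetry of that description (together with the symmetric form of the $\fg$-invariance condition) is the whole content. Your explicit check that $L_0$-equivariance forces $\Phi(\pi)(m,p)$ into the restricted dual is the one point the paper leaves implicit, and you handle it correctly.
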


\begin{proof} The lemma follows easily from the fact that

\centerline{$\B_{L_0}(M\times N,P^*)=\prod_{u+v+w=0}\,M^*_u
\otimes N^*_v\otimes P^*_w$.}
\end{proof}

It follows that $\B_{\fg}(M\times N,P^\ast)$ is fully symmetric in $M,N$ and
$P$. This fact will be referred to as the {\it $\fS_3$-symmetry}. The obvious
symmetry $\B_{\fg}(M\times N,P)\simeq \B_{\fg}(N\times M,P)$
will be called the  {\it $\fS_2$-symmetry}

\subsection{Definition of germs}

For  $M\in\cH$ 
and $x\in \nr$, set 
$M_{\geq x}=\oplus_{\Re z\geq x}\,M_z$ and
$M_{\leq x}=\oplus_{\Re z\leq x}\,M_z$,
where $\Re z$ denotes the real part of $z$. 
Given another object 
$N\in\cH$, let $\Hom^0(M,N)$ be the space of all linear maps
$\phi:M\rightarrow N$ such that
$\phi(M_{\geq x})=0$ for some $x\in{\bf R}$. Set
$\cG(M,N)=\Hom(M,N)/\Hom^0(M,N)$. 
The image  in $\cG(M,N)$ 
of some $\phi\in \Hom(M,N)$, which is denoted by $\cG(\phi)$,
is called its {\it germ}. The space $\cG(M,N)$ is called the 
{\it space of germs of maps} from $M$ to $N$.

Let $\fg$ be a Lie $L_0$-algebra and
let $M,\,N\in{\cH}_{\fg}$. It is clear that $\Hom^0(M,N)$ is a 
$\fg$-submodule of $\Hom(M,N)$ and thus $\fg$ acts on
$\cG(M,N)$. Denote by
$\cG_{\fg}(M,N)$ the space of $\fg$-equivariant germs. 
 We will often use the following
obvious fact: any $\psi\in\cG_{L_0}(M,N)$ is the germ of
a $L_0$-equivariant map $\phi:M\rightarrow N$, but
in general a $\fg$-equivariant germ $\psi$ is not the germ of a 
$\fg$-equivariant map $\phi$.

 Let $M,\,N\in\cH$.
A linear map  $\phi:M\rightarrow N$ is called {\it continous}
if for any $x\in\nr$ there exist $y\in\nr$ such that
$\phi(M_{\geq y})\subset N_{\geq x}$. The germ of 
a continous map $\phi$ is called a {\it continous germ} of a map.

 It is not possible to compose arbitrary germs of maps.
However let  $\phi,\psi$ two morphisms of $\cH$ 
such that the composition $\psi\circ\phi$ is defined.
It is easy to show that $\cG(\psi\circ\phi)$ only depends  
of  $\cG(\phi)$ and $\cG(\psi)$ whenever $\phi$ is continous. Thus,
it is possible to compose the continous germs.

Since the $L_0$-equivariant germs of maps are continous, 
the $\fg$-equivariant germs can be composed. 
Therefore we can define the category $\cG(\cH_\fg)$ of {\it germs of
weight $\fg$-modules} as follows. Its objects are weight $\fg$-modules, and  
for $M,\,N\in\cH_\fg$, the space of 
$\cG(\cH_\fg)$-morphisms from $M$ to $N$ is 
$\cG_{\fg}(M,N)$. Viewed as an object of the
category $\cG(\cH_\fg)$, an object   $M\in\cH_\fg$ is called a
{\it germ of a weight $\fg$-module} and it is denoted
by $\cG_{\fg}(M)$.

When $\fg_{\geq 0}$ and $\fg_{\leq 0}$ are finitely generated as Lie algebras,
there is a concrete  characterization of the $\fg$-equivariant germs of maps.
Indeed let $M,N\in\cH_{\fg}$ and let $\phi:M\rightarrow N$ be a $L_0$-equivariant
map. Then $\cG(\phi)$ is $\fg$-equivariant iff: 

(i) the restriction $\phi:M_{\geq x}\rightarrow N_{\geq x}$ is
$\fg_{\geq 0}$-equivariant, and

(ii) the induced map $\phi:M/M_{\leq x}\rightarrow N/N_{\leq x}$ is
$\fg_{\leq 0}$-equivariant,

\noindent for any $x>>0$.

\subsection{Germs of modules of the class $\cS$}

It is easy to compute the germs of the $\bW$-modules of the class 
$\cS$. 

\begin{lemma}\label{W-germs}
For any $\xi_1,\xi_2\in \np^1$, we have
$\cG_{\bW}(A_{\xi_1})=\cG_{\bW}(B_{\xi_2})$.
Thus for any $M\in\cS$, we have
${\cG}_{\bW}(M)\simeq{\cG}_{\bW}(\Omega^\delta_{u})$ for some
$\delta\in\nc$ and $u\in\nc/\nz$.
\end{lemma}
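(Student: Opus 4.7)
The plan is to exploit the fact that, by definition, $\cG_{\bW}(M)$ only sees the asymptotic ($\Re z\gg 0$) behaviour of $M$. Combined with the Kaplansky-Santharoubane classification, this reduces the claim to showing that each $M\in\cS$ with support in $\nz$ is, above some finite level, interchangeable with a fixed tensor density module. The main tool is the concrete characterization of $\bW$-equivariant germs given at the end of the previous subsection, i.e.\ conditions (i) and (ii) on $M_{\geq x}$ and $M/M_{\leq x}$ for $x\gg 0$.

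First I would dispose of the easy case: if $\Supp M\not\equiv 0\mod\nz$, then $M$ is irreducible by Kaplansky-Santharoubane, hence already of the form $\Omega^\delta_u$, and the lemma is trivial. So from now on I assume $\Supp M\subseteq\nz$, and by Kaplansky-Santharoubane again, $M$ is one of $\Omega^\delta_0$, $A_\xi$, $B_\xi$ or $\overline{A}\oplus\nc$.

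For $M=A_\xi$ with $\xi=(a,b)$ I would consider the $L_0$-equivariant map $\phi\colon A_\xi\to\Omega^1_0$ defined by $\phi(e_n^A)=e_n^1$ for $n\neq 0$ and $\phi(e_0^A)=0$. Since the generic formula $L_m.e_n^A=(m+n)e_{m+n}^A$ coincides with the corresponding formula in $\Omega^1_0$, checking (i) and (ii) at any $x\geq 1$ is immediate: on $A_{\xi,\geq x}$ all indices are already strictly positive and $m\geq 0$ keeps us there, while in the quotient $A_\xi/A_{\xi,\leq x}$ the only anomalous output $L_m.e_0^A=(am^2+bm)e_m^A$ is harmless because $e_0^A$ has already been killed. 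An obvious reverse map provides a two-sided germ-inverse, yielding $\cG_{\bW}(A_\xi)\simeq\cG_{\bW}(\Omega^1_0)$. The same scheme gives $\cG_{\bW}(B_\xi)\simeq\cG_{\bW}(\Omega^0_0)$ (this time the exceptional value $L_m.e_{-m}^B=(am^2+bm)e_0^B$ lands at weight $0$, which is invisible in the quotient), and $\cG_{\bW}(\overline{A}\oplus\nc)\simeq\cG_{\bW}(\Omega^0_0)$ since the $\nc$-summand is concentrated at weight $0$ and drops out of the germ.

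To finish I would use the de Rham differential $d\colon\Omega^0_0\to\Omega^1_0$, $e_z^0\mapsto z\,e_z^1$. For any $x>0$ this restricts to a bijection on weights $\geq x$, hence has a $\bW$-equivariant and invertible germ; an inverse germ is furnished by the operator $\rho$ of Section~\ref{sect_0.2}. This yields $\cG_{\bW}(\Omega^0_0)\simeq\cG_{\bW}(\Omega^1_0)$ and completes both assertions of the lemma. I do not anticipate any serious obstacle; the one step needing care is the bookkeeping of (i) and (ii) for $A_\xi$ and $B_\xi$, which becomes routine once the exceptional indices are isolated in the truncations.
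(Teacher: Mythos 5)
Your argument is correct and is exactly the direct verification from the definitions that the paper leaves to the reader ("follows easily from the definition"): truncate away the finitely many anomalous weights, check conditions (i) and (ii) for the evident $L_0$-equivariant comparison maps $A_\xi\to\Omega^1_0$ and $B_\xi\to\Omega^0_0$, and link $\Omega^0_0$ to $\Omega^1_0$ via the germ of $d$. One small inaccuracy: for $u\equiv 0\bmod\nz$ the operator $\rho$ of Section \ref{sect_0.2} is the residue followed by the inclusion of constants, so its germ is zero and it is \emph{not} an inverse germ of $d$; this is harmless, since your preceding observation that $d$ is bijective on weights $\geq x>0$ already yields $\cG(d)^{-1}$ (whose $\bW$-equivariance is automatic for a two-sided inverse of an equivariant germ), consistently with the paper's remark after Table \ref{table1}.
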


The proof of the lemma  follows easily from the definition. 
Recall that $\fsl(2)$ is the Lie subalgebra
$\nc L_{-1}\oplus \nc L_{0}\oplus \nc L_{1}$ of
$\bW$.  In what follows, it is useful to compare  $\fsl(2)$-germs
and $\bW$-germs.

\begin{lemma} \label{sl(2)-germs}
Let $\delta\in\nc$ and $u\in\nc/\nz$. 

(i) We have 
$\cG_{\fsl(2)}(\Omega^{\delta}_u)\simeq
\cG_{\fsl(2)}(\Omega^{1-\delta}_u)$.

(ii) If  $\cG_{\bW_{\geq -1}}(\Omega^{\delta}_u)\simeq
\cG_{\bW_{\geq -1}}(\Omega^{\gamma}_u)$  for some 
$\delta\neq \gamma$, then $\{\delta,\gamma\}=\{0,1\}$.
\end{lemma}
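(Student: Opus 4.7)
The plan is to reduce both parts of the lemma to elementary computations with the explicit action of the low-degree generators. Since each weight space of $\Omega^\delta_u$ is one-dimensional and a germ isomorphism must be $L_0$-equivariant, any candidate isomorphism is necessarily diagonal in the weight basis, hence encoded by a family of non-zero scalars $(\lambda_z)_{z\in u,\ \Re z\gg 0}$. The equivariance condition under each $L_m$ then becomes a multiplicative recursion on these scalars.

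For part (i), I would set $\phi(e_z^\delta) = \lambda_z\, e_z^{1-\delta}$ and work out what $L_{\pm 1}$-equivariance demands. A short calculation shows that both $L_1$- and $L_{-1}$-equivariance collapse to the single recursion $\lambda_{z+1}/\lambda_z = (z+1-\delta)/(z+\delta)$, while $L_0$-equivariance is automatic. Choosing $x_0$ large enough that neither $z+\delta$ nor $z+1-\delta$ vanishes for any $z\in u$ with $\Re z\geq x_0$, I can pick any non-zero starting value and propagate to obtain non-zero $\lambda_z$ for all such $z$. The resulting $\phi$ is a bijective $\fsl(2)$-equivariant map from $(\Omega^\delta_u)_{\geq x_0}$ to $(\Omega^{1-\delta}_u)_{\geq x_0}$, and its germ is the desired isomorphism.

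For part (ii), assume $\phi(e_z^\delta)=\mu_z\, e_z^\gamma$ represents a $\bW_{\geq -1}$-equivariant germ isomorphism, with $\mu_z\neq 0$ for $\Re z\gg 0$ (forced by invertibility). The equivariance constraint from $L_m$ reads $(m\delta+z)\mu_{z+m}=\mu_z(m\gamma+z)$. Specializing to $m=1$ and $m=2$ yields $\mu_{z+1}/\mu_z=(z+\gamma)/(z+\delta)$ and $\mu_{z+2}/\mu_z=(z+2\gamma)/(z+2\delta)$. Iterating the first expression must agree with the second, so the identity
\[
(z+\gamma)(z+1+\gamma)(z+2\delta) = (z+\delta)(z+1+\delta)(z+2\gamma)
\]
holds for infinitely many $z\in u$, hence identically as polynomials in $z$. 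Expanding both sides, the difference collapses to $(\gamma-\delta)\bigl[(\gamma+\delta-1)z + 2\gamma\delta\bigr]$; under $\gamma\neq\delta$, the vanishing of both coefficients forces $\gamma+\delta=1$ and $\gamma\delta=0$, i.e., $\{\delta,\gamma\}=\{0,1\}$.

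The main point requiring care is the reduction to a diagonal map, which follows from one-dimensionality of the weight spaces together with $L_0$-equivariance; after that, both parts amount to short polynomial manipulations. It is worth noting that the use of $L_2$, which lies in $\bW_{\geq -1}$ but not in $\fsl(2)$, is essential in (ii), and this is consistent with (i) providing a non-trivial isomorphism at the level of $\fsl(2)$-germs for every $\delta$.
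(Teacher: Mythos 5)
Your proof is correct and follows essentially the same route as the paper: in (i) you solve the recursion $\lambda_{z+1}/\lambda_z=(z+1-\delta)/(z+\delta)$ by propagation where the paper writes down an explicit Gamma-like solution $f(z-\delta)/f(z+\delta-1)$, and in (ii) you compare the $L_1$-recursion iterated twice with the $L_2$-recursion, which is exactly the paper's comparison of $L_1^2$ and $L_2$, leading to the same cubic identity $(z+\gamma)(z+1+\gamma)(z+2\delta)=(z+\delta)(z+1+\delta)(z+2\gamma)$. Your explicit factorization of the difference as $(\gamma-\delta)[(\gamma+\delta-1)z+2\gamma\delta]$ correctly fills in the step the paper leaves as ``it follows easily.''
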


\begin{proof} 
{\it Proof of the first assertion:}

Choose a function $f:\nc\rightarrow\nc^*$ such that
$f(z)=zf(z-1)$ whenever $\Re z>1$. Define
$\psi:\Omega^{\delta}_u\rightarrow\Omega^{1-\delta}_u$ by the formula:

\noindent\centerline{$\psi(e^\delta_z)=f(z-\delta)/f(z+\delta-1)\,
e^{1-\delta}_z$,}

\noindent for any $z\in u$. 
It is easy to check that
$L_{\pm1}.\psi(e^\delta_z)=\psi(L_{\pm1}.e^\delta_z)$ whenever
$z\in u$ and
$\Re (z\pm\delta)>1$. Therefore  the germs of $L_{\pm1}.\psi$ are zero,
which means that $\cG(\psi)$ is a $\fsl(2)$-equivariant 
isomorphism.

{\it Proof of the second assertion:}
Assume that the $\bW_{\geq -1}$-germs 
of $\Omega^{\delta}_u$ and $\Omega^{\gamma}_u$ are
isomorphic. Then there exist a $L_0$-equivariant isomorphism
$\psi:\Omega^{\delta}_u\rightarrow\Omega^{\gamma}_u$
whose germ is $\bW_{\geq -1}$-equivariant. It follows that

$\psi(L_1^2. e^\delta_z)=L_1^2.\psi(e^\delta_z)$ and
$\psi(L_2. e^\delta_z)=L_2.\psi(e^\delta_z)$,

\noindent 
for any $z\in u$ with $\Re z>>0$. 
Set $\psi(e^\delta_{z})=ae^{\gamma}_{z}$
and $\psi(e^\delta_{z+2})=be^{\gamma}_{z}$.
It follows that:

$(z+\delta)(z+1+\delta)b =(z+\gamma)(z+1+\gamma)a$, and

$(z+2\delta) b=(z+2\gamma)a$.

Therefore we get:

$(z+\delta)(z+1+\delta)(z+2\gamma)=
(z+\gamma)(z+1+\gamma)(z+2\delta)$. 

\noindent Since this identity holds for any $z\in u$ with $\Re z>>0$,
it is valid for any $z$. Since $\delta\neq\gamma$, it follows easily
that  $\{\delta,\gamma\}=\{0,1\}$.
\end{proof}

 It follows from Lemma \ref{sl(2)-germs}(ii) that 
the degree  of modules of the class $\cS$ is  indeed an invariant of 
their $\bW$-germs.

\subsection{Germs of bilinear maps}
For  $M,N$ and $P\in\cH$, denote by $\B(M\times N,P)$ the
space of bilinear maps from $M\times N$ to $P$. Also denote by 
$\B^0(M\times N,P)$  the space of all $\pi\in \B(M\times N,P)$ such that 
$\pi(M_{\geq x}\times N_{\geq x})=0$ for any $x>>0$. Set 

\centerline{$\cG(M\times N,P)=\B(M\times N,P)/\B^0(M\times N,P)$,}

The image of a bilinear map $\pi\in \B(M\times N,P)$ in
$\cG(M\times N,P)$ is called its {\it germ} and it is denoted by
${\cG}(\pi)$. The set  $\cG(M\times N,P)$ is called the {\it space of germs of bilinear
maps} from $M\times N$ to $P$.

Let $\fg$ be a Lie $L_0$-algebra and let 
$M,\,N$ and $P$ be weight $\fg$-modules. As before,
$\cG(M\times N,P)$ is a $\fg$-module in a natural way
and we denote by $\cG_{\fg}(M\times N,P)$  the
space of $\fg$-equivariant germs of bilinear maps.
As before,the composition of $\fg$-equivariant germs of 
bilinear maps with  $\fg$-equivariant germs of 
linear maps is well defined. Thus we obtain:

\begin{lemma}\label{lemma_germ-gen}
The space  $\cG_{\fg}(M\times N,P)$  depends functorially  on the germs of the
weight $\fg$-modules $M$, $N$ and $P$.
\end{lemma}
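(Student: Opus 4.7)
The plan is to realize $\cG_{\fg}(-\times -, -)$ as a functor on the germ category $\cG(\cH_\fg)$: contravariant in the first two slots, covariant in the third. Given $\fg$-equivariant germs $\phi\in \cG_\fg(M',M)$, $\psi\in \cG_\fg(N',N)$ and $\chi\in \cG_\fg(P,P')$, the aim is to define a pullback--pushforward
\[
\cG_\fg(M\times N,P)\longrightarrow \cG_\fg(M'\times N',P'),\qquad \cG(\pi)\longmapsto \cG\bigl(\tilde\chi\circ\pi\circ(\tilde\phi\times\tilde\psi)\bigr),
\]
where $\tilde\phi,\tilde\psi,\tilde\chi$ are $L_0$-equivariant lifts of $\phi,\psi,\chi$. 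Such lifts exist by the remark made earlier that every $L_0$-equivariant germ is the germ of an honest $L_0$-equivariant map, and then to check the three functor axioms.

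First I would verify that the displayed map is independent of the choice of bilinear representative of $\cG(\pi)$. The crucial point is that an $L_0$-equivariant map $\tilde\phi\colon M'\to M$ is automatically continuous in the sense $\tilde\phi(M'_{\geq x})\subset M_{\geq x}$ for every $x$, and the same for $\tilde\psi$. Consequently, if $\pi$ annihilates $M_{\geq x}\times N_{\geq x}$ for $x\gg 0$, then $\tilde\chi\circ\pi\circ(\tilde\phi\times\tilde\psi)$ annihilates $M'_{\geq x}\times N'_{\geq x}$, so it lies in $\B^0(M'\times N',P')$. Independence from the chosen lifts is analogous: replacing $\tilde\phi$ (resp.\ $\tilde\psi$, $\tilde\chi$) by another lift of the same germ alters it by a map with zero germ, and composing this alteration with the remaining $L_0$-equivariant factors again produces a bilinear map in $\B^0(M'\times N',P')$.

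Next I would check that the resulting germ is $\fg$-equivariant. Each of $\tilde\phi,\tilde\psi,\tilde\chi$ intertwines the $\fg$-action up to a map with zero germ, and $\pi$ intertwines it up to an element of $\B^0$; combining these intertwiners via the same continuity argument shows the composition is $\fg$-equivariant modulo $\B^0$. The identity and associativity axioms of a functor then transfer straight from the corresponding facts for honest $L_0$-equivariant maps, already used to build the category $\cG(\cH_\fg)$.

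The only real obstacle is the well-definedness step, and it collapses entirely onto the continuity property of $L_0$-equivariant maps; the lemma is simply a bilinear avatar of the composability of continuous germs established earlier. I therefore expect the proof to be short and essentially bookkeeping once the pullback--pushforward formula above is written down.
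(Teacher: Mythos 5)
Your proposal is correct and follows the same route the paper takes: the paper derives the lemma immediately from the composability of continuous (in particular $L_0$-equivariant, hence $\fg$-equivariant) germs established just before, and your pullback--pushforward construction together with the continuity argument $\tilde\phi(M'_{\geq x})\subset M_{\geq x}$ is exactly the bookkeeping the paper leaves implicit.
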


Let $M,N,P$, and $Q$ in $\cS$ and let $\phi\in \Hom_{L_0}(P,Q)$.
Assume that $\cG(\phi)$ is a $\fsl(2)$-equivariant isomorphism. The composition 
with $\phi$ induces a map

\noindent\centerline{$\cG(\phi)_*:\cG_{_{\fsl(2)}}(M\times N,P)\rightarrow
\cG_{_{\fsl(2)}}(M\times N,Q)$.}

\begin{lemma}\label{zero_intersection}
Assume that $\cG(\phi)$ is not $\bW_{\geq -1}$-equivariant. Then  the two subspaces 
$\cG_{_\bW}(M\times N,Q)$ and 
$\cG(\phi)_*\cG_{\bW}(M\times N,P)$ of 
$\cG_{_{\fsl(2)}}(M\times N,Q)$ have a zero intersection.
\end{lemma}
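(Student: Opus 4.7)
\medskip

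\noindent\textbf{Proof plan.}
I argue by contradiction. Let $\pi\in\cG_{\bW}(M\times N,P)$ be such that $\pi':=\cG(\phi)_*(\pi)$ also lies in $\cG_{\bW}(M\times N,Q)$; the goal is to show $\pi'=0$, which I will do by assuming $\pi\neq 0$ as a germ and deducing that $\cG(\phi)$ is itself $\bW$-equivariant, contradicting the hypothesis.

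The first step is to extract an intertwining identity. Since $\phi$ is $L_0$-equivariant it is continuous, so $\cG(\phi)_*$ is well defined on germs of bilinear maps. Fix $X\in\bW$, pick representatives of $\pi$ and $\phi$, and take $(m,n)$ of sufficiently large weight (depending on $X$). The $\bW$-equivariance of $\pi$ as a germ gives $\pi(Xm,n)+\pi(m,Xn)=X\cdot\pi(m,n)$; applying $\phi$ and comparing with the $\bW$-equivariance of $\pi'=\phi\circ\pi$ yields the germ identity $\phi(X\cdot\pi(m,n))=X\cdot\phi(\pi(m,n))$. Thus $\phi$ intertwines every $X\in\bW$ on the image of $\pi$ at large weights.

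The heart of the argument is then to show that this image exhausts $P$ at germ level. Set $I_s:=\Image(\pi)\cap P_s$. Since $P\in\cS$ has one-dimensional weight spaces, $I_s\in\{0,P_s\}$, and the $\bW$-equivariance of $\pi$ forces $L_k\cdot I_s\subseteq I_{s+k}$ modulo germ zero. If $\pi$ is nonzero as a germ, then $I_{s_0}=P_{s_0}$ for some $s_0$ with $\Re s_0$ arbitrarily large. By Lemma \ref{W-germs} I may assume $P\cong\Omega^\delta_u$; the explicit formula $L_k\cdot e^\delta_s=(k\delta+s)e^\delta_{s+k}$ is then nonzero whenever $s\neq -k\delta$, so iterating $L_{\pm 1}$ from $s_0$ gives $I_s=P_s$ for every $s$ with $\Re s$ sufficiently large. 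Hence $I$ coincides with $P$ as a germ.

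Combining the two steps, $\phi$ commutes with every $X\in\bW$ on the whole germ of $P$, i.e., $\cG(\phi)$ is $\bW$-equivariant, and in particular $\bW_{\geq -1}$-equivariant; this contradicts the hypothesis, so $\pi=0$ and $\pi'=0$. The main delicate point is bookkeeping at the germ level: one must verify that the intertwining identity is a genuine germ identity (shifts by negative-degree $X$ force enlarging the weight threshold accordingly) and that $\Image(\pi)$ is well defined modulo germ zero. Once this is in place, the essentially irreducible germ structure of class-$\cS$ modules does the rest.
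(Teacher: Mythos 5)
Your proof is correct and is essentially the paper's argument in contrapositive form: both rest on the two observations that $\bW$-equivariance of $\cG(\pi)$ and of $\cG(\phi\circ\pi)$ forces $\phi$ to intertwine the action of each $L_k$ on the image of $\pi$, and that this image fills the one-dimensional weight spaces of $P$ at all sufficiently large weights. The only (cosmetic) difference is that the paper tracks just $L_2$ — using that $\bW_{\geq -1}$ is generated by $\fsl(2)$ and $L_2$, so non-equivariance gives $\cG(L_2.\phi)\neq 0$ — and exhibits a single weight where $L_2.(\phi\circ\pi)$ has nonzero germ, whereas you run the same mechanism for all $L_k$ to conclude that $\cG(\phi)$ would be fully $\bW$-equivariant.
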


\begin{proof}
 The lemma is equivalent to the following statement:
for any  $L_0$-equivariant bilinear map 
$\pi:M\times N\rightarrow P$ whose germ is
$\bW$-equivariant and non-zero,  $\cG(\phi\circ\pi)$ is not
$\bW$-equivariant. So, we prove this statement. 

Set $\mu=L_2.(\phi\circ\pi)$.
We claim that $\cG(\mu)\neq 0$, i.e. for any $r\in\nr$ there are scalars 
$x,y$ with $\Re x>r$ and $\Re y>r$ such that $\mu(M_x\times N_y)\neq 0$
Indeed, if  $r$ is big enough  we have

(i) the restriction 
$\pi_{\geq r}: M_{\geq r}\times N_{\geq r}\rightarrow P_{\geq 2r}$ of
$\pi$ is ${\bW}_{\geq 0}$-equivariant, and

(ii)$L_1.P_z=P_{z+1}$ for any $z$ with $\Re z>2r$.

\noindent Since  $\cG(\pi)\neq 0$, there exists $(x_0,y_0)\in\Supp M\times\Supp N$
with $\Re x_0>r,\, \Re y_0>r$ and $\pi(M_{x_0}\times N_{y_0})=P_{z_0}$, where 
$z_0=x_0+y_0$. 

By hypothesis $\cG(\phi)$ is $\fsl(2)$-equivariant but not
$\bW_{\geq -1}$-equivariant. Since $\bW_{\geq -1}$ is generated by
$\fsl(2)$ and $L_2$, we have $\cG(L_2.\phi)\neq 0$. Hence there exists
$k\in{\bf Z}_{\geq 0}$ such that $(L_2.\phi)(P_{k+z_0})\neq 0$. 

By assumptions, the linear span of $\cup_{m.n\geq 0}\,\pi(M_{x_0+m}\times N_{y_0+n})$ is
a $\bW_{\geq 0}$-module and the $\bW_{\geq 0}$-module $P_{\geq \Re z_0}$ is generated
by $P_{z_0}=\pi(M_{x_0}\times N_{y_0})$. Thus there are $m,n\in\nz_{\geq 0}$
with $m+n=k$ such that $\pi(M_{x_0+m}\times N_{y_0+n})=P_{z_0+k}$.

Since $L_2.(\phi\circ\pi_{\geq r})
=(L_2.\phi)\circ\pi_{\geq r}$, it follows that
$\mu(M_{x_0+m}\times N_{y_0+n})\neq 0$, which proves the claim.
 \end{proof}

\section{Degenerate and non-degenerate \\
 bilinear maps}\label{sect_(non)deg-map} 

In this section, we define the notions of {\it degenerate} and
{\it non-degenerate} bilinear maps and similar notions for
germs of bilinear maps. We show that a
$\bW$-equivariant bilinear map $\pi$ between modules of the class $\cS$ is degenerate if and
only if $\cG(\pi)=0$. Moreover,  $\cG(\pi)$ is non-degenerate if $\cG(\pi)\neq 0$.

Let $M,N$ and  $P$ in $\cH$. For $\pi\in \B(M\times N,P)$, the set
$\Supp\,\pi=\{(x,y)\vert\,\pi(M_x\times N_y)\neq 0\}$ 
is called the {\it   support} of $\pi$.
The bilinear map $\pi$ is called {\it   non-degenerate} if
$\Supp\,\pi$ is Zarisky dense in $\nc^2$. Otherwise, it is called
{\it degenerate}.

Any germ $\tau \in \cG(M\times N,P)$ is represented by a
bilinear map $\pi \in \B(M\times N,P)$, and let $\pi_{\geq x}$ be its restriction
to $M_{\geq x} \times N_{\geq x}$.
The germ $\tau$ is called 
{\it non-degenerate} if $\pi_{\geq x}$ is non-degenerate for any $x>>0$.

From now on, assume that $M$, $N$ and $P$ are $\bW$-modules of
the class $\cS$.  For $\pi\in \B_{\bW}(M\times N,P)$, set
$M_\pi=\{m\in M\vert\,\pi(m\times N_{\geq x})=0\,
\text{for}\; x>>0\}$ and
$N_\pi=\{n\in N\vert\,\pi(M_{\geq x}\times n)=0\,
\text{for}\; x>>0\}$.
It is clear that $M_\pi$ and $N_\pi$ are $\bW$-submodules.

\begin{lemma}\label{lemma5}
Let $\pi\in \B_{\bW}^0(M\times N, P)$. Then we have:

\begin{enumerate}
\item[(i)] $\pi(M_\pi\times N_\pi)\subset P^{\bW}$ and therefore
$\pi(M_\pi\times N_\pi)$ is isomorphic to 0 or $\nc$.

\item[(ii)] $M/M_\pi$ is isomorphic to 0 or $\nc$.

\item[(iii)] $N/N_\pi$ is isomorphic to 0 or $\nc$.
\end{enumerate}
\end{lemma}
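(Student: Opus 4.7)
The plan is to dispatch (ii) and (iii) as direct consequences of the Kaplansky--Santharoubane classification, and to prove (i) by sandwiching $\pi(M_\pi\times N_\pi)$ between two natural subspaces of $P$.

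For (ii) and (iii): Since $\pi\in \B^0_\bW(M\times N,P)$, there exists $x_*$ with $\pi(M_{\geq x_*}\times N_{\geq x_*})=0$. Directly from the definition of $M_\pi$ this gives $M_{\geq x_*}\subseteq M_\pi$, so $M_\pi$ is a $\bW$-submodule of $M$ whose support is cofinite in $\Supp(M)$. Running through the Kaplansky--Santharoubane list, the only $\bW$-submodules of an $M\in \cS$ with cofinite support are $M$ itself and, when $M\cong A_\xi$ for some $\xi\in\np^1$ (in particular when $M\cong \Omega^1_0$) or when $M\cong \overline{A}\oplus \nc$, the submodule $\overline{A}$. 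In every case $M/M_\pi$ is therefore either $0$ or the trivial module $\nc$. The argument for $N/N_\pi$ is identical by the $\fS_2$-symmetry.

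For (i), introduce
\[
W=\{v\in P : L_k v = 0 \text{ for all sufficiently large } k\}.
\]
The first step is to check that $\pi(M_\pi\times N_\pi)\subseteq W$. Taking homogeneous $m\in M_\pi$ of weight $a$ and $n\in N_\pi$ of weight $b$, the $\bW$-equivariance of $\pi$ gives $L_k\pi(m,n)=\pi(L_km,n)+\pi(m,L_kn)$. Once $k$ is large enough that $L_k m$ lies in the tail of $M$ annihilating $n$ and $L_k n$ lies in the tail of $N$ annihilating $m$, both terms vanish. By linearity this extends to all of $M_\pi\times N_\pi$.

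The second, and main, step is to show $W\subseteq P^\bW$. A short calculation with $[L_k,L_n]=(n-k)L_{k+n}$ shows $W$ is a $\bW$-submodule, hence graded. I expect this step to be the main obstacle, because identifying $W$ requires a case-by-case verification: one must show that a nonzero homogeneous vector $v\in P_z$ with $z\neq 0$ is never in $W$. In each of the five families of $\cS$-modules the scalar $\lambda_k$ defined by $L_k v = \lambda_k\, v_{z+k}$ is explicit and elementary (linear in $k$ for $\Omega^\delta_u$; at most quadratic in $k$, or the constant $n$ independent of $k$, for $A_{a,b}$ and $B_{a,b}$), and in each such case it is nonzero for all but finitely many $k$. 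Once this is established, $W\subseteq P_0$; and for $v\in W$, any $L_k v$ still lies in $W$ but has weight $k$, forcing $L_k v=0$ for $k\neq 0$, while $L_0 v = 0\cdot v = 0$ automatically. Hence $W\subseteq P^\bW$. Since $P\in\cS$ and $P^\bW$ has dimension at most one, $\pi(M_\pi\times N_\pi)\subseteq P^\bW$ is isomorphic to $0$ or to $\nc$, proving (i).
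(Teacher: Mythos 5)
Your proof is correct and follows essentially the same route as the paper: parts (ii) and (iii) come from the observation that $M_\pi$ contains a tail $M_{\geq x}$ and hence, by the Kaplansky--Santharoubane description, has quotient $0$ or $\nc$, while part (i) rests on the fact that a vector of a module in $\cS$ killed by $L_k$ for all large $k$ must be $\bW$-invariant. The only difference is that you supply the explicit case-by-case verification of these two facts about modules in $\cS$, which the paper leaves to the reader.
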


\begin{proof}
It follows from the explicit description of all modules $X\in\cS$ 
(see Section \ref{KS_Theorem}) that

\begin{enumerate}
\item if $Y$ is a $\bW$-submodule with $L_0.Y\neq 0$, then $X/Y$ is isomorphic to $\nc$ or
$0$, and

\item if $x \in X$ satisfies $L_k.x=0$ for $k>>0$, then $x$ is 
$\bW$-invariant.
\end{enumerate}
Since $\cG(\pi)$ is zero, 
 $M_\pi$ contains $M_{\geq x}$ for any $x>>0$. 
Therefore, $L_0.M_\pi\neq 0$ and Assertions (ii) and (iii) follows.

Moreover for any $(m,n)\in M_\pi\times N_\pi$, we have
$L_k.\pi(m,n)=\pi(L_k.m,n)+\pi(m,L_k.n)=0$ for $k>>0$. Thus $\pi(m,n)$ is $\bW$-invariant which proves the first
assertion.
\end{proof}

Let  $\pi\in \B_{\bW}(M\times N, P)$ with
$\Supp\,\pi\subset \{(0,0)\}$. Obviously there are
$\bW$-equivariant maps $a:M\rightarrow \nc$,
$b:N\rightarrow \nc$ and $c:\nc\rightarrow P$ such that
$\pi(l,m)=c(a(l)b(m))$. Since it comes from a bilinear
map between trivial modules, such a bilinear map $\pi$ will be
called {\it   trivial}. Note that non-zero trivial maps only occur when
$M$ and $N$ are in the $A$-family and $P$ is in the $B$-family.

For a subset $Z$ of $\nc^2$, denotes
by $\overline{Z}$ its Zariski closure. Also
define the three lines $H$, $V$ and $D$ of $\nc^2$ by:

\noindent\centerline{$H=\nc \times \{0\}$, 
$V=\{0\} \times \nc$ and
$D=\{(z,-z)\vert\, z\in \nc \}$.}

\begin{lemma}\label{lemma_support}
Let $\pi\in \B_{\bW}^0(M\times N, P)$
and  set $S=\overline{\Supp \,\pi}$. Assume that $\pi$ is not trivial. Then
we have:
\begin{enumerate}
\item[(i)] $S$ is a union of lines and $\Supp\,\pi\subset H\cup D\cup V$.
\item[(ii)] $\pi(M_\pi\times N_\pi)\neq 0$
iff $D\subset S$.
\item[(iii)] $M/M_\pi\neq 0$ iff $V\subset S$.
\item[(iv)] $N/N_\pi\neq 0$ iff $H\subset S$.
\end{enumerate}
\end{lemma}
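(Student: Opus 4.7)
The plan is to exploit Lemma~\ref{lemma5} together with a weight analysis. The crucial observation is: since $M/M_\pi$ is either $0$ or isomorphic to $\nc$ (the trivial module, concentrated at weight $0$), every element of $M$ of nonzero weight automatically lies in $M_\pi$; the analogous statement holds for $N_\pi$. This immediately yields assertion (i). Given $(x,y) \in \Supp \pi$, if both $x \neq 0$ and $y \neq 0$, then $m_x \in M_\pi$ and $n_y \in N_\pi$, so $\pi(m_x, n_y) \in \pi(M_\pi \times N_\pi) \subset P^{\bW}$; being of weight $x+y$, this forces $x+y = 0$. Otherwise $x = 0$ or $y = 0$. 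Hence $\Supp \pi \subset H \cup V \cup D$.

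Assertions (iii) and (iv) I would argue symmetrically. If $M/M_\pi \neq 0$, pick $m_0 \in M_0 \setminus M_\pi$; by definition of $M_\pi$, $\pi(m_0, n) \neq 0$ for some $n$ of weight $y$ with $\Re y$ arbitrarily large, producing points $(0, y_i) \in \Supp \pi$ that are Zariski dense in $V$, so $V \subset S$. Conversely, if $V \subset S$ there are points $(0, y) \in \Supp \pi$ for a Zariski dense set of $y$; since $\dim M_0 \le 1$, the same basis vector $m_0$ pairs non-trivially with $N_{\geq s}$ for every $s$, showing $m_0 \notin M_\pi$ and hence $M/M_\pi \neq 0$.

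For (ii), the ``$\Leftarrow$'' direction follows from the same weight observation: if $D \subset S$, then $\Supp \pi$ contains infinitely many $(z,-z)$ with $z \neq 0$, so $m_z \in M_\pi$ and $n_{-z} \in N_\pi$ give a nonzero element of $\pi(M_\pi \times N_\pi)$. The ``$\Rightarrow$'' direction is the main obstacle: the restriction $\beta := \pi|_{M_\pi \times N_\pi} \colon M_\pi \times N_\pi \to P^{\bW} \cong \nc$ corresponds to a nonzero $\bW$-equivariant map $\phi \colon M_\pi \to N_\pi^*$, and one must show its image is a $\bW$-submodule of $N_\pi^*$ with infinite support, since then $\phi$ is non-zero on $(M_\pi)_x$ for arbitrarily large $\Re x$, producing the desired Zariski-dense subset of $D$ in $\Supp \pi$. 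Using the Kaplansky--Santharoubane classification of submodules of $\cS$-modules together with Lemma~\ref{lemma_dual-density} for their duals, the only alternative is that $\phi$ factors as $M_\pi \twoheadrightarrow \nc \hookrightarrow N_\pi^*$; this requires $M_\pi$ to admit a trivial quotient and $N_\pi^*$ to contain a trivial submodule, which forces $M_\pi = M$ and $N_\pi = N$ with both $M$ and $N$ in the $A$-family, and then $\pi$ takes the form $(m,n) \mapsto \alpha\,\Res(m)\,\Res(n)\,p_0$ for some scalar $\alpha$ and fixed $p_0 \in P^{\bW}$, hence is trivial---contradicting the hypothesis. Finally, since $\pi$ is not trivial, $\Supp \pi$ contains some $(x,y) \neq (0,0)$ lying on one of $H, V, D$; by (iii), (iv), or (ii) the entire corresponding line is in $S$, so $S$ is a nonempty union of lines, completing (i).
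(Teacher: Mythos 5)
Your proof is correct in substance but follows a genuinely different route from the paper's. The paper factors $\pi$ through the two auxiliary maps $\eta: M_\pi\times N_\pi\rightarrow\pi(M_\pi\times N_\pi)$ and $\theta: M\times N\rightarrow P/\pi(M_\pi\times N_\pi)$, shows that $\eta$ is zero or of infinite rank, and identifies $\overline{\Supp\,\theta}$ via the short exact sequence relating $M\otimes N/M_\pi\otimes N_\pi$ to $M\otimes(N/N_\pi)\oplus(M/M_\pi)\otimes N$. You instead exploit a single weight observation drawn from Lemma \ref{lemma5}: since $M/M_\pi$ and $N/N_\pi$ are trivial modules concentrated in weight $0$, every vector of nonzero weight lies in $M_\pi$ (resp.\ $N_\pi$), while $\pi(M_\pi\times N_\pi)\subset P^{\bW}\subset P_0$. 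This gives the inclusion $\Supp\,\pi\subset H\cup V\cup D$ and the easy directions of (ii)--(iv) almost immediately, and your treatment of the hard direction of (ii) (the nonzero pairing $M_\pi\times N_\pi\rightarrow\nc$ induces a $\bW$-map $\phi: M_\pi\rightarrow N_\pi^*$ whose image either has infinite support, yielding $D\subset S$, or is a trivial submodule, forcing $\pi$ to be trivial) is a legitimate and arguably more transparent replacement for the paper's Step 1. What the paper's approach buys is a uniform description of $\overline{\Supp\,\pi}$ in one stroke; what yours buys is that each assertion is reduced to an elementary weight computation.

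One step needs repair. In the converse of (iii) you infer from ``$(0,y)\in\Supp\,\pi$ for a Zariski dense set of $y$'' that $m_0$ pairs non-trivially with $N_{\geq s}$ for every $s$. A Zariski dense subset of $V$ is merely an infinite subset of a $\nz$-coset and could a priori be bounded above in real part, in which case $m_0$ would still belong to $M_\pi$ by its very definition. The conclusion is nevertheless correct, and the fix is one line using your own key observation: pick any $(0,y)\in\Supp\,\pi$ with $y\neq 0$; then $n_y\in N_\pi$, and if $m_0$ were in $M_\pi$ the nonzero vector $\pi(m_0,n_y)$ would lie in $P^{\bW}\cap P_y=0$, a contradiction; hence $m_0\notin M_\pi$. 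The same remark applies to (iv). With that adjustment the argument is complete.
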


\begin{proof} By Lemma \ref{lemma5}, we have 
$\pi(M_\pi\times N_\pi)=0$ or $\nc$.
Note that $\pi$ induces the two  bilinear maps 
$\eta: M_\pi\times N_\pi\rightarrow \pi(M_\pi\times N_\pi)$ and 
$\theta: M\times N\rightarrow P/\pi(M_\pi\times
N_\pi)$.

{\it Step 1:} We claim that $\eta=0$ or the bilinear map $\eta$ has
infinite rank. Assume otherwise. By Lemma \ref{lemma5}, the image of $\eta$ is $\nc$. 
Since $\eta$ factors through finite
dimensional modules, it follows that $M_\pi$ has a finite 
dimensional quotient. By Lemma \ref{lemma5} (ii), $M_\pi$ is infinite dimensional. 
Hence  $M_\pi$ is reducible, which implies that  $M=M_\pi$.
Similarly, $N=N_\pi$. It follows easily that $\pi$ is a trivial bilinear map which contradicts
the hypothesis. 

It follows that $\eta=0$ or
$\overline{\Supp\, \eta}=D$. 

{\it Step 2:}
We claim that $\overline{\Supp\,\pi}=
\overline{\Supp\, \theta}\cup \overline{\Supp\, \eta}$.

Since $\pi(M_\pi\times N_\pi)\simeq \nc$ or $0$, we have:
$\overline{\Supp\, \theta}\cup \overline{\Supp\, \eta}
\subset \overline{\Supp\,\pi}\subset
\overline{\Supp\, \theta}\cup D$.
Therefore the claim follows from the previous step.

{\it Step 3:} Using the short exact sequence
\begin{align*}
0 \longrightarrow & M\otimes N/M_\pi\otimes N_\pi
\longrightarrow M\otimes (N/N_\pi)\oplus (M/M_\pi)\otimes N \longrightarrow \\
&\longrightarrow  M/M_\pi \otimes N/N_\pi \longrightarrow 0,
\end{align*}
it follows that, up to the point $(0,0)$, the sets
$\Supp\, \theta$ and
$\Supp\, M\times \Supp\,(N/N_\pi)\\ \cup \Supp\,(M/M_\pi)\times\Supp\, N$
coincide, which proves the lemma.
\end{proof}

\begin{lemma}\label{germ-deg}
A bilinear map $\pi\in \B_{\bW}(M\times N, P)$ is degenerate iff 
its germ is zero. 
Moreover, if $\cG(\pi)\neq 0$, the germ $\cG(\pi)$ is non-degenerate. 
\end{lemma}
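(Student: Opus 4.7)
The plan is to establish the \emph{iff} together with the stronger ``moreover'' clause in two steps. The forward direction of the \emph{iff} (germ zero implies $\pi$ degenerate) is immediate from Lemma \ref{lemma_support}: if $\pi$ is non-trivial then $\Supp\pi\subset H\cup V\cup D$, and if $\pi$ is trivial then $\Supp\pi\subset\{(0,0)\}$; in either case $\Supp\pi$ lies in a finite union of lines (or points) in $\nc^2$ and is therefore not Zariski dense.

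For the remaining content, I would prove the stronger statement: if $\cG(\pi)\neq 0$ then $\cG(\pi)$ is non-degenerate. This gives the ``moreover'' clause directly and, by contrapositive together with the obvious fact that the restriction of a degenerate map remains degenerate, it also yields the implication $\pi$ degenerate $\Rightarrow\cG(\pi)=0$, closing the \emph{iff}.

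Suppose for contradiction that $\cG(\pi)\neq 0$ but some $\pi_{\geq x_0}$ is degenerate, so that $S:=\Supp\pi\cap R\subset V(f)$ for a nonzero polynomial $f\in\nc[x,y]$, where $R=\{(a,b):\Re a,\Re b\geq x_0\}$. The engine is an expansion principle coming from $\bW$-equivariance: for any $(a,b)\in S$ and any $0\neq p=\pi(m,n)$ at that bidegree, the identity $L_r p=\pi(L_r m,n)+\pi(m,L_r n)$, combined with the fact that $L_r:P_{a+b}\to P_{a+b+r}$ is non-zero for all but finitely many $r\geq 1$ (a short check against the three families of the Kaplansky--Santharoubane list), forces at least one of the points $(a+r,b),(a,b+r)$ to lie in $S$ for cofinitely many $r$. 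Applied at any fixed $(a_0,b_0)\in S$, this makes the polynomial $r\mapsto f(a_0+r,b_0)\cdot f(a_0,b_0+r)$ vanish at cofinitely many integers, hence identically; since $\nc[r]$ is a domain, $f$ admits a linear factor of the form $(y-b_0)$ or $(x-a_0)$.

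Now iterate. After finitely many linear factors have been extracted, the associated finite union $L$ of horizontal and vertical lines cannot contain all of $S$, for otherwise, choosing $x$ larger than the real parts of every line in $L$ would give $\Supp\pi\cap\{(a,b):\Re a,\Re b\geq x\}=\emptyset$, contradicting $\cG(\pi)\neq 0$. Pick a fresh $(a_i,b_i)\in S\setminus L$ and rerun the expansion principle to extract another linear factor of $f$. Each step strictly lowers $\deg f$, so after at most $\deg f$ iterations we reach a contradiction. The main obstacle is the uniformity of the expansion principle across $\cS$; the degree-lowering argument itself and the bookkeeping of extracted factors are routine.
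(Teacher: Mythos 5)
Your proof is correct and follows essentially the same route as the paper: the ``expansion principle'' via $L_r$-equivariance for $r\gg 0$ is exactly the paper's first step, and your extraction of linear factors from a hypothetical vanishing polynomial $f$ is just a rephrasing of the paper's observation that the Zariski closure of $\Supp\,\pi_{\geq x}$ would have to contain infinitely many distinct horizontal/vertical lines. Your explicit appeal to Lemma \ref{lemma_support} for the direction ``germ zero $\Rightarrow$ degenerate'' is the same (implicit) reduction the paper makes when it says it suffices to prove the second assertion.
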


\begin{proof}
Let $\pi \in \B_{\bW}(M\times N, P)$. 
For $x\in\nr$, denote by $\pi_{\geq x}$
the restriction of $\pi$ to $M_{\geq x}\times N_{\geq x}$.
It is enough to prove the second assertion.

{\it First Step:}  We claim that if
$(s,t)$ belongs to $\Supp\,\pi_{\geq 1}$, then $(s,t)+H$ or $(s,t)+V$ lies in 
$\overline{\Supp\, \pi_{\geq 1}}$. By hypothesis, $\Re (s+t)>0$ and it follows from the 
explicit description of modules of the class $\cS$ that
$L_k.P_{s+t}\neq 0$ for any $k>>0$. So we get

\noindent\centerline{$\pi(L_k.M_s\times N_t)\neq 0$ or
$\pi(M_s\times L_k.N_t)\neq 0$ for $k>>0$.}

Hence $(s+k,t)$ or $(s,t+k)$ is in $\Supp\,\pi_x$ for infinitely many
$k>0$, and the claim follows.

{\it Second step:} Assume that $\cG(\pi)$ is non-zero and prove that
its germ is non-degenerate. Let $x\geq 1$ be an arbitrary real number, and let
$(s,t)\in\Supp\, M_{\geq x}\times\Supp\, N_{\geq x}$. By definition
there exists  two increasing sequences of
integers $0\leq a_1<a_2\dots$ and $0 \leq b_1<b_2\dots$  such that 
$(s+a_k, t+b_k)$ belongs to $\Supp\,\pi_{\geq x}$ for all $k$. Since all lines 
$(s+a_k, t+b_k)+V$, $(s+a_k, t+b_k)+H$ are distinct, 
$\overline{\Supp\,\pi_{\geq x}}$ contains infinitely many lines, and therefore
$\overline{\Supp\,\pi_{\geq x}}=\nc^2$
\end{proof}

\section{Examples of $\bW$-equivariant bilinear maps}\label{sect_example}

This section provides a list of $\bW$-equivariant bilinear maps between modules of the class $\cS$. 
The goal of this paper is to prove that this list generates all 
bilinear maps between modules of the class $\cS$. 
More precisely, if one allows the following operations:
the $\fS_3$-symmetry, the composition with morphisms between modules in $\cS$ and
the linear combination, then one obtains all  
bilinear maps between modules of the class  $\cS$.

\subsection {The Poisson algebra $\cP$ of symbols twisted pseudo-differential operators}

To be brief, we will not give the  definition of the algebra $\cD$  of twisted pseudo-differential
operators on the circle, see e.g \cite{IM}.  Just say that 
the term {\it twisted} refers to the fact that complex powers of $z$ and   $\D$ are allowed.
As usual, $\cD$ is an associative filtered 
algebra whose associated graded space  ${\cP}$ is a Poisson algebra.

Indeed ${\cP}$ is explicitely defined as follows. As vector space, ${\cP}$ has basis the family
$(z^s\partial^\delta)$ where $s$ and
$\delta$ runs over  $\nc$ (here $\partial$ stands for the symbol of
$\D$).   The commutative associative product on  ${\cP}$
is denoted by $.$ and the Lie bracket is denoted by $\{,\}$. These products are  explicitely 
defined on the basis elements by

\noindent\centerline{$(z^s\partial^\delta).(z^{s'}\partial^{\delta'})=
(z^{s+s'}\partial^{\delta+\delta'})$,  }

\noindent\centerline{$\{(z^s\partial^\delta),(z^{s'}\partial^{\delta'})\}
=(\delta s'-\delta' s)\,z^{s+s'-1}\partial^{\delta+\delta'-1}$.}

\noindent It is clear that $\oplus_{n\in{\nz}}\,\nc\, z^{n+1}\partial$ is a Lie subalgebra 
naturally isomorphic to $\bW$. 
As a $\bW$-module, there is a decomposition  of $\cP$ as

\noindent\centerline{
$\cP=\oplus_{(\delta,u)\in \nc\times\nc/\nz}\,\,\Omega_u^{\delta}$,
}

\noindent where $\Omega_u^{\delta}=\oplus_{s\in u}\,\nc\,z^{s-\delta}\partial^{-\delta}$.
We have

\noindent\centerline{
$\Omega_u^{\delta_1}.\Omega_v^{\delta_2}\subset \Omega_{u+v}^{\delta_1+\delta_2}$,
}

\noindent\centerline{
$\{\Omega_u^{\delta_1},\Omega_v^{\delta_2}\}\subset \Omega_{u+v}^{\delta_1+\delta_2+1}$}

\noindent for all $\delta_1,\delta_2\in\nc$ and $u,v\in\nc/\nz$. Therefore the Poisson structure
induces two families of $\bW$-equivariant bilinear maps:

\noindent\centerline{
$P^{\delta_1,\delta_2}_{u,v}:\Omega_u^{\delta_1}\times \Omega_v^{\delta_2}\rightarrow
\Omega_{u+v}^{\delta_1+\delta_2}, (m,n)\mapsto m.n$,  }

\noindent\centerline{
$B^{\delta_1,\delta_2}_{u,v}:\Omega_u^{\delta_1}\times \Omega_v^{\delta_2}
\rightarrow \Omega_{u+v}^{\delta_1+\delta_2+1},
(m,n)\mapsto \{m,n\}$.}

\noindent It is clear that all these maps are non-degenerate, except
$B^{0,0}_{u,v}$. Indeed we have  $B^{0,0}_{u,v}=0$, for all $u,v\in\nc/\nz$.

\subsection{The extended Lie algebra $\cP_{\xi}$}

Recall Kac's construction of an extended Lie algebra \cite{Kac}. 
Start with a triple $(\fg,\kappa,\delta)$, where 
$\fg$ is a Lie algebra, $\delta:\fg\rightarrow\fg, x\mapsto\delta.x$ is a
derivation and $\kappa:\fg\times\fg\rightarrow\nc$ is a symmetric
$\fg$-equivariant and $\delta$-equivariant bilinear form.
Then the extended Lie algebra  is the vector space
$\fg_{e}=\fg\oplus \nc\,\delta\oplus \nc\,c$ and its Lie bracket 
$[,]_e$ is defined by the following relations:

$[x,y]_e=[x,y]+\kappa(x,\delta.y)c$,

$[\delta,x]_e=\delta.x$,

$[c,\fg_e]_e=0$,

\noindent for any $x,y\in\fg$ and where $[x,y]$ is the Lie bracket in
$\fg$.

We will apply this construction to the Lie algebra ${\cP}$.
The residue map $\Res:{\cal P}\rightarrow {\nc}$ is defined as follows:
$\Res(\Omega_u^{\delta})=0$ for $(\delta,u)\neq (1,0)$ and the restriction
of $\Res$ to $\Omega^1_0$ is the usual residue. Thus
set $\kappa(x,y)=\Res\,xy$ for any $x$, $y\in\cP$.
Let $(a,b)$ be projective coordinates of $\xi\in\np^1$. Define the derivation 
$\delta_\xi$ of $\cP$ by
$\delta_\xi x=z^{b-a}\partial^{-a}\{z^{a-b}\partial^{a},x\}$ for
any $x\in\cP$. Informally, we have 
$\delta_\xi x=\{\log( z^{a-b}\partial^{a}),x\}$.

It is easy to check that $\kappa$ is equivariant under
$\ad(\cP)$ and under $\delta_\xi$.  The two-cocycle
$x,y\in\cP\mapsto \Res(x\delta_\xi y)$ is 
the Khesin-Kravchenko cocycle \cite {KK}.
The corresponding extended Lie
algebra will be denoted by ${\cP}_\xi$. Thus we have

\noindent\centerline{
${\cP}_\xi={\cP}\oplus\nc\, \delta_\xi\oplus\nc\, c$.}

Since ${\cP}_\xi$ is not a Poisson algebra, its Lie bracket will be denoted by $[\,,]$.
Set ${\cP}^+_\xi=[{\cP}_\xi,{\cP}_\xi]$ and
${\cP}^{-}_\xi={\cP}_\xi/Z({\cP}_\xi)$, where $Z({\cP}_\xi)$ is the center
of ${\cP}_\xi$. As before $\bW$ is a Lie subalgebra of
${\cP}_\xi$, and the $\bW$-modules
${\cP}^{\pm}_\xi$ decomposes as follows

\noindent\centerline{
$\cP^{\pm}_\xi=\oplus_{(\delta,u)\in \nc\times\nc/\nz}\,\,\Omega_u^{\delta}(\xi,\pm)$,
}

\noindent where $\Omega_u^{\delta}(\xi,\pm)=\Omega_u^{\delta}$ or all 
$(\delta,u)\in \nc\times\nc/\nz$ except that
$\Omega_0^{0}(\xi,-)\simeq A_\xi$ and $\Omega_0^{1}(\xi,+)\simeq B_\xi$.
The Lie bracket of $\cP_\xi$ induces a bilinear map

\noindent\centerline{
$B(\xi):\cP^{-}_\xi\times \cP^{-}_\xi\rightarrow \cP^{+}_\xi,
(m\, \hbox{\rom modulo}\, Z(\cP_\xi),n\, \hbox{\rom modulo}\,Z(\cP_\xi)\mapsto [m,n]$.
}
 
\noindent As before,
the components of $B(\xi)$ provide the following  $\bW$-equivariant bilinear maps

\noindent\centerline{
$B^{\delta_1,\delta_2}_{u,v}(\xi):\Omega_u^{\delta_1}(\xi,-)\times 
\Omega_v^{\delta_2}(\xi,-)
\rightarrow \Omega_{u+v}^{\delta_1+\delta_2+1}(\xi,+),
(m,n)\mapsto [m,n]$.}

\noindent It should be noted that if $\delta_1\delta_2(\delta_1+\delta_2)\neq 0$,
then we have  $B^{\delta_1,\delta_2}_{u,v}(\xi)=B^{\delta_1,\delta_2}_{u,v}$.

\subsection{Other $\bW$-equivariant bilinear maps.}

{\it The Grozman operator:} Among the $\bW$-equivariant bilinear maps between modules of the class $\cS$, the most surprizing is the Grozman 
operator. It  is the bilinear map $G_{u,v}:\Omega_{u}^{-2/3}\times
\Omega_{v}^{-2/3}\rightarrow \Omega_{u+v}^{5/3}$, defined by the following formula

\noindent
\centerline{$G_{u,v}(e^{-2/3}_x,e^{-2/3}_y)=(x-y)(2x+y)(x+2y)e^{5/3}_{x+y}$.}

\smallskip
\noindent
{\it The bilinear map $\Theta_{\infty}$:} 
Let $\xi\in\np^1$ with projective
coordinates $(a,b)$.
 Define $\Theta_{\xi}: A_{a,b}\times A_{a,b}\rightarrow
B_{a,b}$ by the following requirements:

$\Theta_{\xi}(u^A_m,u^A_n)=0$ if $mn(m+n)\neq 0$ or if $m=n=0$

$\Theta_{\xi}(u^A_0,u^A_m)=-\Theta(u^A_m,u^A_0)=1/m\,u^B_m$ if $m\neq 0$,

$\Theta_{\xi}(u^A_{-m},u^A_m)=1/m\,u^B_0$ if $m\neq 0$.

It is easy to see  that $a\Theta_{\xi}$ is identical to the bracket
$-B^{0,0}_{0,0}(a,b)$. In particular, $\Theta_{\xi}$ is $\bW$-equivariant
(for $a=0$, this follows by extension of polynomial identities). So
$\Theta_{\infty}$ is the only  new bilinear map, since, for $\xi\neq \infty$, 
$\Theta_\xi$ is essentially the bracket of $\cP_\xi$.

\smallskip
\noindent
{\it The bilinear map $\eta(\xi_1,\xi_2,\xi_3)$:}
Let $\xi_1,\xi_2,\xi_3$ be  points in $\np^1$ which are not all equal, with projective
coordinates
$(a_1,b_1),\,(a_2,b_2),\,(a_3,b_3)$. Choose a non-zero triple $(x,y,z)$ such that 
$z(a_3,b_3)=x(a_1,b_1)+y(a_2,b_2)$. Recall that all $A_\xi$ have the same underlying vector
space, therefore the map

\noindent\centerline{$y\Res\times id +x id\times \Res: A_{a_1,b_1}\times A_{a_2,b_2}
\rightarrow A_{a_3,b_3}$}

\noindent is well defined.  This defines a map, up to a scalar multiple,

\noindent\centerline{
$\eta(\xi_1,\xi_2,\xi_3):A_{\xi_1}\times A_{\xi_2}\rightarrow A_{\xi_3}$,}

\noindent which is clearly $\bW$-equivariant.

\smallskip 
\noindent {\it The obvious map $P^M$:}
 Also for each $M\in\cS$ denote by 
$P^M$ the obvious map
$((a+x),m)\in (\overline A\oplus\nc)\times M\mapsto xm\in M$.

\subsection{ Primitive bilinear maps}

Let $\cN$ be the class of all non-zero $\bW$-equivariant maps $\phi:M\rightarrow N$,
where $M,N$ are non-isomorphic modules of the class $\cS$. Up to conjugacy, there are only two possibilities:

(i) $M$ is in the $A$-family,  $N$ is in the $B$-family
and $\phi$ is the morphism $M\twoheadrightarrow\nc\hookrightarrow N$, or

(ii) $M$ is in the $B$-family,   $N$ is in the $A$-family and
$\phi$ is the morphism $M\twoheadrightarrow\overline A\hookrightarrow N$.

\noindent Hence, the condition $M \not\simeq N$ means that $M$ and $N$ are not simultaneously
isomorphic to $\overline A\oplus \nc$. 

Let $M,M',N,N',P$ and $P'$ in $\cS$. Let 
$\phi:M'\rightarrow M$, $\psi:N'\rightarrow N$ and 
$\theta:P\rightarrow P'$ be $\bW$-equivariant maps,
and let $\pi\in \B_{\bW}(M\times N, P)$ be a $\bW$-equivariant bilinear map.
Set $\pi'=\theta\circ\pi\circ(\phi\times\psi)$. If at least one of the three morphisms
$\phi$, $\psi$ or $\theta$ is of the class $\cN$, then $\pi'$ is called an 
{\it imprimitive form} of $\pi$.

A $\bW$-equivariant bilinear map between modules of the class $\cS$
is called {\it primitive} if is  not a linear combinations of imprimitive forms.
The composition of any three composable morphisms of the class $\cN$ is  zero. It follows
easily that any $\bW$-equivariant bilinear maps between modules of the class $\cS$
is either primitive, or it is a linear combination of imprimitive bilinear form. 
Thus the classification of all $\bW$-equivariant bilinear maps between 
modules of the class $\cS$ reduces to the classification of primitive ones.

\begin{lemma} \label {primitivity}
Let $M,N$ and $P$ be $\bW$ modules of the class $\cS$, and let
$\pi\in\B_{\bW}(M\times N,P)$. Assume one of the following conditions holds

(i) $M$ and $N$ are irreducible and $P$ is the linear span of $\pi(M\times N)$

(ii) $N$ and $P$ are irreducible and the left kernel of
$\pi$ is zero.

Then   $\pi$ is primitive.
\end{lemma}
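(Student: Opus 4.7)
The proof proceeds by contradiction. Suppose
\[
\pi = \sum_i c_i\, \theta_i \circ \rho_i \circ (\phi_i \times \psi_i),
\]
where for each $i$, $\phi_i: M \to \tilde M_i$, $\psi_i: N \to \tilde N_i$, $\theta_i: \tilde P_i \to P$ are $\bW$-equivariant maps between modules in $\cS$ with at least one of them in $\cN$, and $\rho_i \in \B_{\bW}(\tilde M_i \times \tilde N_i, \tilde P_i)$. The key preliminary fact I need is: if $X = \Omega_u^\delta$ is an irreducible module in $\cS$, then every nonzero $\bW$-equivariant map $X \to Y$ or $Y \to X$ with $Y \in \cS$ satisfies $Y \cong X$, making the map a scalar multiple of an isomorphism and hence not in $\cN$. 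Indeed, simplicity of $X$ forces such a map to be injective or surjective, and the only composition factors of members of $\cS$ that are not irreducible tensor densities are $\nc$ and $\overline{A}$, neither of which belongs to $\cS$ nor is isomorphic to any irreducible tensor density.

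For case (i), both $M$ and $N$ are irreducible, so no $\phi_i$ or $\psi_i$ can lie in $\cN$; after renormalizing we may take $\phi_i = \id_M$ and $\psi_i = \id_N$, forcing $\theta_i \in \cN$ for every $i$. Each such $\theta_i$ factors either as $\tilde P_i \twoheadrightarrow \nc \hookrightarrow P$ or as $\tilde P_i \twoheadrightarrow \overline{A} \hookrightarrow P$, so the image of $\pi$ lies in the $\bW$-submodule $\nc + \overline{A} \subseteq P$. When $P$ is irreducible, neither $\nc$ nor $\overline{A}$ embeds as a submodule of $P$ and this sum is $0$; when $P \cong A_\xi$ with $\xi \in \np^1$, the sum reduces to $\overline{A}$; when $P \cong B_\xi$, it reduces to $\nc$. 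In each of these three cases the image lies in a proper submodule of $P$, contradicting that $\pi$ spans $P$. The remaining possibility $P \cong \overline{A} \oplus \nc$ is handled by splitting $\pi = \pi_\nc + \pi_{\overline{A}}$ according to $P = \overline{A} \oplus \nc$ and observing that the $\nc$-component cannot lift through $A_\xi \twoheadrightarrow \nc$, nor can the $\overline{A}$-component lift through $B_\xi \twoheadrightarrow \overline{A}$, since the defining extensions are non-split for $\xi \neq (0,0)$.

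For case (ii), the analogous argument with $N, P$ irreducible forces $\phi_i \in \cN$ for every $i$. If $M$ is irreducible, no such $\phi_i$ exists, the decomposition is impossible, and $\pi$ is automatically primitive. If $M \cong A_\xi$, every $\phi_i$ has kernel $\overline{A}$, whence $\pi(\overline{A}, N) = 0$ and $\overline{A}$ lies in the left kernel of $\pi$, contradicting the hypothesis; the case $M \cong B_\xi$ is symmetric with $\nc$ in place of $\overline{A}$. For $M \cong \overline{A} \oplus \nc$, the zero-left-kernel hypothesis first forces $\pi(1_\nc, -): N \to P$ (where $1_\nc$ generates the $\nc$-summand of $M$) to be a nonzero $\bW$-equivariant map, whence $N \cong P$ by irreducibility. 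Splitting the sum by kernel type ($\nc$ versus $\overline{A}$) and using that the image of $1_\nc$ in any $B_\xi$ is the $\bW$-invariant $e^B_0$, one shows that no imprimitive decomposition can reproduce the actual restriction of $\pi$ to $\nc \times N$.

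The main obstacle I anticipate is the treatment of the decomposable module $\overline{A} \oplus \nc$ when it appears as $P$ in case (i) or as $M$ in case (ii): there the clean ``image in a proper submodule'' or ``kernel contains a proper submodule'' argument breaks down because $\overline{A} \oplus \nc$ admits both $\overline{A}$ and $\nc$ as submodules (and as quotients). The resolution rests on the non-splitness of the short exact sequences $0 \to \overline{A} \to A_\xi \to \nc \to 0$ and $0 \to \nc \to B_\xi \to \overline{A} \to 0$ for $\xi \neq (0,0)$, which obstructs the lifting of the relevant $\bW$-bilinear maps through $A_\xi$ or $B_\xi$ and supplies the missing contradiction.
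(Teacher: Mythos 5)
The paper gives no argument for this lemma beyond declaring it obvious, so there is no ``official'' proof to compare against. Your main reduction is certainly the intended one, and it is correct where it matters: a morphism of the class $\cN$ can have neither an irreducible source nor an irreducible target, so irreducibility of two of the three modules forces the $\cN$-morphism of every imprimitive summand into the remaining slot; then in case (i) the image of $\pi$ lands in the proper submodule $\overline A$ of $A_\xi$ (resp.\ $\nc$ of $B_\xi$, resp.\ $0$ of an irreducible $P$), and in case (ii) the left kernel of $\pi$ contains $\overline A$ (resp.\ $\nc$). For $M$, $N$, $P$ indecomposable your proof is complete.

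The genuine gap is your treatment of the decomposable module $\overline A\oplus\nc$, and in case (i) it is not repairable, because the statement as literally written fails there. Take $M=\Omega^0_u$, $N=\Omega^0_{-u}$ with $u\not\equiv 0$, $P=\overline A\oplus\nc$, and
$\pi=\iota_{\overline A}\circ p\circ P^{0,0}_{u,-u}+\iota_{\nc}\circ\Res\circ P^{0,1}_{u,-u}\circ(\id\times \d)$,
where $p:\Omega^0_0\to\overline A$ is the quotient map and $\iota_{\overline A}$, $\iota_{\nc}$ are the inclusions of the two summands of $P$. Both summands of $\pi$ are imprimitive forms: $\iota_{\overline A}\circ p$ is the $\cN$-morphism $B_\infty\twoheadrightarrow\overline A\hookrightarrow A_{0,0}$, and $\iota_\nc\circ\Res$ is the $\cN$-morphism $A_\infty\twoheadrightarrow\nc\hookrightarrow B_{0,0}$. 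Yet $\pi(e^0_s,e^0_t)$ equals the class of $z^{s+t}$ in $\overline A$ plus $t\,\delta_{s+t,0}\in\nc$, so its image spans $\overline A\oplus\nc$: hypothesis (i) holds while $\pi$ is a sum of two imprimitive forms. Your appeal to the non-splitness of $0\to\overline A\to A_\xi\to\nc\to 0$ is a non sequitur here: non-splitness prevents $\nc$ from embedding into $A_\xi$, but it does not prevent a bilinear map with values in $A_\xi$ (here $A_\infty=\Omega^1_0$) from having nonzero composition with $\Res$ --- the residue pairing above is exactly such a map. Similarly, in case (ii) with $M\cong\overline A\oplus\nc$ the conclusion does hold, but your sketch is not a proof: what is actually needed is that every equivariant bilinear map $B_{\xi'}\times N\to P$ (resp.\ $A_{\xi'}\times N\to P$) with $N\cong P$ irreducible vanishes on $\nc e^B_0\times N$ (resp.\ on $\overline A\times N$), and that is a consequence of the explicit classification carried out in Sections 9--10, not of non-splitness. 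The correct repair is to invoke the paper's standing convention that $M$, $N$ and $P$ are indecomposable: add that hypothesis explicitly and delete the two $\overline A\oplus\nc$ paragraphs, after which your argument stands.
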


This obvious  lemma is  useful to check easily that some 
bilinear maps are primitive. Some of the bilinear forms defined in this section are not
primitive. It will be proved in Corollary \ref{cor1} of Section \ref{sect_conclusion}
that, up to $\fS_3$-symmetry,   all primitive bilinear forms 
between modules of the class $\cS$
have been defined in this section.

\subsection{Examples of $\bW$-equivariant germs}\label{sect_example-germ}

In what follows, the elements of $\nc^3$ will be written as triples
$(\delta_1,\delta_2,\gamma)$. Let $\sigma$ be the involution
defined by 
$(\delta_1,\delta_2,\gamma)^\sigma=(\delta_2,\delta_1,\gamma)$.
Let $\fz$ be the union 
of the two affine planes $H_i$, the six affine lines $D_i$ and the five
points $P_i$ defined as follows.
For $i=0,1$, the plane $H_i$ is  defined by the equation $\gamma=\delta_1+\delta_2+i$.
The six lines $D_i$ are parametrized as follows:

$D_1=\{(0,\delta,\delta+2)\vert \delta\in\nc\}$ and $D_2= D_1^\sigma$,

$D_3=\{(\delta,1,\delta)\vert \delta\in\nc\}$ and $D_4=D_3^\sigma$,

$D_5=\{(\delta,-(1+\delta),1)\vert \delta\in\nc\}$ 

$D_6=\{(\delta,1-\delta,0)\vert \delta\in\nc\}$.

\noindent The fives points are 
$P_1=(0,0,3)$, $P_2=(0,-2,1)$,
$P_3=P_2^\sigma$ and $P_4=(1,1,0)$ and
$P_5=(-2/3,-2/3,5/3)$.  Also
let $\fz^\ast$ be the set of all $(\delta_1,\delta_2,\gamma)\in\fz$ such that
$\{\delta_1,\delta_2,\gamma\}\not\subset\{0,1\}$. 

In what follows, we will consider $\Omega_u^\delta$ as a module of
the class ${\cS}^\ast$. Thus  we can define without
ambiguity the   degree of any 
$\pi\in\cG_{\bW}(\Omega_u^{\delta_1}\times\Omega_v^{\delta_2},
\Omega^\gamma_{u+v})$  as the scalar $\gamma-\delta_1-\delta_2$.
The following  table provide a list of germs $\pi$
in $\cG_{\bW}(\Omega_u^{\delta_1}\times\Omega_v^{\delta_2},
\Omega^\gamma_{u+v})$, when $(\delta_1,\delta_2,\gamma)$ runs over $\fz^\ast$.
In the table, we omit the symbol $\cG$. For example, $d^{-1}$ stands for $\cG(d)^{-1}$, which is well-defined even for $u\equiv 0 \mod \nz$.

\begin{table}[h]
\caption{\textbf{List  of $\pi\in\cG_{\bW}(\Omega_u^{\delta_1}\times\Omega_v^{\delta_2},
\Omega^\gamma_{u+v})$,
where $(\delta_1,\delta_2,\gamma)$ runs over $\fz^*$}}
\label{table1}

\begin{center}
\begin{tabular}{|c|c|c||c|} \hline

&$\deg \pi$ & $(\delta_1,\delta_2,\gamma)$&$\pi$ \\ \hline

1. & $3$ &  $(-\frac{2}{3}, -\frac{2}{3}, \frac{5}{3})$ & $G_{u,v}$\\ \hline

2.& $3$ & $(0,0,3)$ & $B^{1,1}_{u,v}\circ(d\times d)$\\ \hline

3.& $3$ & $(0,-2,1)$ & $d\circ B^{1,-2}_{u,v}\circ (d\times id)$ \\ \hline

4.& $3$ & $(-2,0,1)$ & $d\circ B^{-2,1}_{u,v}\circ (id\times d)$ \\ \hline

5.& $2$ & $(0,\delta, \delta+2)$ & $B^{1,\delta}_{u,v}\circ(d\times id)$ \\ \hline

6.& $2$ & $(\delta,0, \delta+2)$ & $B^{\delta,1}_{u,v}\circ(id\times d)$ \\ \hline

7.& $2$ & $(\delta, -\delta-1,1)$ & $d\circ B^{\delta,-\delta-1}_{u,v}$\\ \hline

8.& $1$ & $(\delta_1,\delta_2, \delta_1+\delta_2+1)$ & $B^{\delta_1,\delta_2}_{u,v}$ \\ \hline

9.& $0$ & $(\delta_1,\delta_2, \delta_1+\delta_2)$ & $P^{\delta_1,\delta_2}_{u,v}$ \\
\hline

10.& $-1$ & $(1,\delta,\delta)$ & $P^{0,\delta}_{u,v}\circ (d^{-1}\times
id)$\\ \hline

11.& $-1$ & $(\delta,1,\delta)$ &$P^{\delta,0}_{u,v}\circ(id\times d^{-1})$\\ \hline

12.& $-1$ & $(\delta,1-\delta,0)$ & $d^{-1}\circ P^{\delta,1-\delta}_{u,v}$ \\ \hline
\end{tabular}
\end{center}
{\small The condition $(\delta_1,\delta_2,\gamma)\in\fz^*$ implies that
$(\delta_1,\delta_2)\neq (0,0)$ in the line 8, $(\delta_1,\delta_2)\neq (0,0), (0,1)$
or $(1,0)$ in the line 9, $\delta\neq 0$ or $1$ in the lines 10-12.}
\end{table}

Let $M,\,N$ and $P$ be $\bW$-modules of the class $\cS$. Set $u=\Supp\,M$,
$v=\Supp\,N$ and assume that $\Supp \,P=u+v$. Let 
$\delta_1\in\deg M$,
$\delta_2\in\deg N$ and $\gamma\in\deg P$. It follows from Lemma \ref{W-germs}
that 

\noindent
\centerline{$\cG(M)=\cG(\Omega_u^{\delta_1})$,
$\cG(N)=\cG(\Omega_v^{\delta_2})$ and
$\cG(P)=\cG(\Omega_{u+v}^\gamma)$.}

\begin{lemma}\label{lower}
Assume that $(\delta_1,\delta_2,\gamma)\in\fz$. Then 
$\cG_\bW(M\times N,P)$ is not zero, and moreover
we have $\dim\,\cG_\bW(M\times N,P)\geq 2$ if 
$\{\delta_1,\delta_2,\gamma\}\subset\{0,1\}$.
More precisely we have:

(i) for $(\delta_1,\delta_2,\gamma)\in\fz^*$,  Table \ref{table1}
provides a non-zero $\pi\in\cG_\bW(M\times N,P)$,

(ii) if $\{\delta_1,\delta_2,\gamma\}\subset\{0,1\}$, then we have
$\cG(M)=\cG(\Omega_u^{0})$,
$\cG(N)=\cG(\Omega_v^{0})$ and
$\cG(P)=\cG(\Omega_{u+v}^1)$ and the  maps
$\pi_1:=P^{0,1}_{u,v}\circ(id\times d)$ and $\pi_2:=P^{1,0}_{u,v}\circ(d\times id)$
are non-proportional  elements of $\cG_\bW(M\times N,P)$.

\end{lemma}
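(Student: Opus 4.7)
The plan is to prove both parts by direct verification, exploiting the functoriality of germs under composition and the explicit formulas available for all building blocks from Section \ref{sect_example}.

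For part (i), I would proceed row by row through Table \ref{table1}. Each listed $\pi$ is, by construction, a composition of a primitive $\bW$-equivariant bilinear map from Section \ref{sect_example} (one of $P^{\delta_1,\delta_2}_{u,v}$, $B^{\delta_1,\delta_2}_{u,v}$, or the Grozman operator $G_{u,v}$) with the de Rham map $d$ or the germ $\cG(d)^{-1}$. Here $\cG(d)\colon \cG(\Omega^0_u)\to \cG(\Omega^1_u)$ is an isomorphism of $\bW$-germs: since $d(e^0_z)=z\,e^1_z$, $d$ is bijective on every weight component with $z\neq 0$, so it inverts at the germ level regardless of whether $u\equiv 0 \mod \nz$. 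Thus each entry of Table \ref{table1} is a well-defined element of $\cG_\bW(\Omega^{\delta_1}_u\times\Omega^{\delta_2}_v,\Omega^{\gamma}_{u+v})$, and by Lemma \ref{lemma_germ-gen} it transfers to an element of $\cG_\bW(M\times N,P)$.

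The non-vanishing is checked by evaluating each composition on basis elements. Using the identities $d(e^0_z)=z\,e^1_z$ and $B^{\delta_1,\delta_2}(e^{\delta_1}_x,e^{\delta_2}_y)=(\delta_2 x-\delta_1 y)\,e^{\delta_1+\delta_2+1}_{x+y}$, one obtains in every row a formula of the form $q(x,y)\,e^\gamma_{x+y}$ with $q(x,y)$ a non-zero polynomial: row 1 gives the Grozman polynomial $(x-y)(2x+y)(x+2y)$, row 5 gives $x(\delta x-y)$, row 7 gives $((-\delta-1)x-\delta y)(x+y)$, row 8 gives $\delta_2 x-\delta_1 y$ (non-zero because $(\delta_1,\delta_2)\neq(0,0)$), and so on. By Lemma \ref{germ-deg}, non-vanishing of the germ is equivalent to Zariski density of $\Supp\,\pi$, which holds whenever this polynomial is not identically zero, exactly as guaranteed by the footnote constraints defining $\fz^*$.

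For part (ii), the hypothesis $\{\delta_1,\delta_2,\gamma\}\subset\{0,1\}$ combined with $(\delta_1,\delta_2,\gamma)\in\fz$ forces the triple to lie in $H_0\cup H_1$, and the representative identifications $\cG(M)=\cG(\Omega^0_u)$, $\cG(N)=\cG(\Omega^0_v)$, $\cG(P)=\cG(\Omega^1_{u+v})$ are permitted by Lemma \ref{W-germs}. Evaluate the two proposed maps on weight vectors:
\[
\pi_1(e^0_x,e^0_y)=P^{0,1}_{u,v}(e^0_x,y\,e^1_y)=y\,e^1_{x+y},\qquad
\pi_2(e^0_x,e^0_y)=P^{1,0}_{u,v}(x\,e^1_x,e^0_y)=x\,e^1_{x+y}.
\]
The coefficients $x$ and $y$ are linearly independent as functions on $\Supp M\times\Supp N$, so $\pi_1-\lambda\pi_2$ has non-zero germ for every $\lambda\in\nc$, proving $\dim\cG_\bW(M\times N,P)\geq 2$.

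The main obstacle is nothing conceptual but rather the care needed in rows 10--12, where $\cG(d)^{-1}$ appears: one must keep track of the fact that although $d$ itself is neither injective nor surjective when $u\equiv 0\mod\nz$, its germ is an isomorphism, so the compositions $P^{0,\delta}_{u,v}\circ(d^{-1}\times\id)$ etc.\ make sense only as germs, not as honest bilinear maps. The explicit polynomial coefficients computed above confirm non-vanishing in each case.
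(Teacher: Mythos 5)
Your proof is correct and follows exactly the route the paper intends: Lemma \ref{lower} is stated without a written proof precisely because it is a direct verification that each entry of Table \ref{table1} is a composition of the equivariant maps of Section \ref{sect_example} with $d$ or $\cG(d)^{-1}$, whose coefficient polynomial $q(x,y)$ is visibly non-zero under the footnote restrictions, together with the explicit computation $\pi_1(e^0_x,e^0_y)=y\,e^1_{x+y}$, $\pi_2(e^0_x,e^0_y)=x\,e^1_{x+y}$ in case (ii). Your formulas $d(e^0_z)=z\,e^1_z$ and $B^{\delta_1,\delta_2}(e^{\delta_1}_x,e^{\delta_2}_y)=(\delta_2x-\delta_1y)e^{\delta_1+\delta_2+1}_{x+y}$ check out against the realization in $\cP$, so nothing is missing.
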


Theorem 2, proved in  Section 7,  states that the maps listed in the previous lemma
provide  a basis of $\cG_{\bW}(M\times N,P)$. 
It also states that
$\cG_\bW(M\times N,P)=0$ if $(\delta_1,\delta_2,\gamma)\not\in\fz$.

\section{Classification of $\bW$-equivariant degenerate bilinear maps}\label{sect_Thm1}
Let $M, N$ and $P$ be in the class $\cS$. The goal of the section is the
classification of
all $\bW$-equivariant degenerate bilinear maps
$\pi:M\times N\rightarrow P$. 
In order to simplify the statements, we will always assume that 
$M, N$ and $P$ are indecomposable. 

Assume that $\pi\neq 0$ and set $S=\overline{\Supp\, \pi}$.
By Lemma \ref{lemma_support}, $S\subset H\cup D\cup V$ is an union of lines. Thus there are
four cases, of increasing complexity:

\begin{enumerate}
\item[(i)] $S=0$,
\item[(ii)] $S$ consists of one line $H,\, D$ or $V$,
\item[(iii)] $S$ consists of two lines among $H,\, D$ and $V$, or
\item[(iv)] $S=H\cup D\cup V$.
\end{enumerate}

Since the three lines $H$, $V$ and $D$ are exchanged by the
$\fS_3$-symmetry, we can reduce the full classification to
the following four cases:

\begin{enumerate}
\item[(i)]$S=0$, 
\item[(ii)] $S=V$, 
\item[(iii)] $S=H\cup V$, 
\item[(iv)]$S=H\cup V\cup D$. 
\end{enumerate}

In the following lemmas, it is assumed that $\pi:M\times N\rightarrow P$ is a 
non-zero degenerate $\bW$-equivariant  bilinear map.

\begin{lemma} If $S=0$, then $\pi$ is trivial.
\end{lemma}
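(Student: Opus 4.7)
The plan is to exploit the $\bW$-equivariance of $\pi$ together with the constraint $\Supp\pi\subset\{(0,0)\}$ in order to force $M,N$ into the $A$-family and $P$ into the $B$-family, and then to read off the trivial factorisation. Since $\pi\neq 0$ we must have $\pi(M_0\times N_0)\neq 0$, and because each module lies in $\cS$, the three weight-zero spaces $M_0,N_0,P_0$ are all one-dimensional.

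Applying the equivariance identity $L_k.\pi(l,n)=\pi(L_k.l,n)+\pi(l,L_k.n)$ on $M_0\times N_0$ with $k\neq 0$, the right-hand side lies in $\pi(M_k\times N_0)+\pi(M_0\times N_k)=0$, so $L_k.\pi(l,n)=0$ for all $k\neq 0$. Combined with $L_0.\pi(l,n)=0$ (weight zero), this gives $\pi(M_0\times N_0)\subset P^{\bW}$. Applying the same identity on $M_{-k}\times N_0$ (for $k\neq 0$) yields $\pi(L_k.l,n)=0$, so $\pi$ vanishes on $(\bW.M)_0\times N_0$. Since $\pi$ is non-zero on $M_0\times N_0$ and $\dim M_0=1$, this forces $(\bW.M)_0=0$; by the $\fS_2$-symmetry, $(\bW.N)_0=0$ as well.

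A direct inspection of the Kaplansky--Santharoubane list, restricted to indecomposable $X\in\cS$ with $0\in\Supp X$, shows that the condition $(\bW.X)_0=0$ singles out the $A$-family: for the irreducible $\Omega_0^\delta$ with $\delta\neq 0,1$ one has $L_k.e_{-k}^\delta=k(\delta-1)e_0^\delta$, which is non-zero for some $k$; for the indecomposable $B_\xi$ with $\xi\neq 0$ one has $L_k.e_{-k}^B=(ak^2+bk)e_0^B$, again non-zero for some $k$. Dually, $P_0\subset P^{\bW}$ forces the indecomposable $P$ to lie in the $B$-family, since in the irreducibles $\Omega_0^\delta$ (with $\delta\neq 0,1$) and in the $A$-family the action $L_k.e_0$ is non-zero for suitable $k$. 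Hence $M$ and $N$ lie in the $A$-family and $P$ lies in the $B$-family.

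Finally, the exact sequence $0\to\overline A\to A_\xi\to\nc\to 0$ produces $\bW$-equivariant surjections $a\colon M\to\nc$ and $b\colon N\to\nc$ that restrict to isomorphisms on $M_0$ and $N_0$, while $0\to\nc\to B_\xi\to\overline A\to 0$ yields a $\bW$-equivariant inclusion $c\colon\nc\to P$ with image $P_0$. The bilinear map $(l,n)\mapsto c(a(l)b(n))$ and $\pi$ both vanish off $M_0\times N_0$ and take values in the one-dimensional space $P_0$ when restricted there, hence are proportional; after rescaling $c$ they agree, so $\pi$ is trivial. I expect the only delicate point to be the structural step identifying the $A$- and $B$-families, but this reduces to the explicit weight-zero computations recalled above.
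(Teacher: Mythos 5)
Your argument is correct and complete; the paper itself offers no proof (it declares the lemma obvious, the key assertion being already embedded in its definition of trivial maps, where it states that $\bW$-equivariant maps $a:M\rightarrow\nc$, $b:N\rightarrow\nc$, $c:\nc\rightarrow P$ with $\pi(l,n)=c(a(l)b(n))$ "obviously" exist), and your weight-zero computations supply exactly the justification being left to the reader. In particular, your characterization of the three modules via $(\bW.M)_0=(\bW.N)_0=0$ and $P_0\subset P^{\bW}$ recovers the paper's remark that non-zero trivial maps occur only when $M$ and $N$ are in the $A$-family and $P$ is in the $B$-family.
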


The lemma is obvious. 

A $\bW$-equivariant map $\psi: N\rightarrow P$ is
called an {\it   almost-isomorphism} if its kernel has dimension $\leq
1$. The only  almost-isomorphisms which are not isomorphisms between modules
of the class $\cS$ are the 
maps from $B_{\xi}$ to $A_{\eta}$ obtained as the composition of
$B_{\xi}\twoheadrightarrow\overline{A}$ and $\overline{A}\hookrightarrow A_{\eta}$.

\begin{lemma}\label{lemma_deg-V}
Assume that $S=V$. Then there is
surjective maps $\phi:M\rightarrow \nc$ and
almost-isomorphism 
$\psi: N\rightarrow P$ such that

\centerline{$\pi(x,y)=\phi(x)\psi(y)$,}

\noindent for any $(x,y)\in M\times N$.
\end{lemma}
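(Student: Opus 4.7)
The strategy is to apply Lemmas \ref{lemma5} and \ref{lemma_support} to obtain a factorisation $\pi(x,y)=\phi(x)\psi(y)$, and then to invoke the Kaplansky--Santharoubane classification to verify that $\psi$ is an almost-isomorphism. First note that $\pi$ lies in $\B_{\bW}^0(M\times N,P)$ (since $\pi$ is degenerate, $\cG(\pi)=0$ by Lemma \ref{germ-deg}) and is non-trivial (since $\Supp\,\pi$ is not contained in $\{(0,0)\}$), so Lemma \ref{lemma_support} applies.

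From $S=V$ one reads off three consequences: $V\subset S$ forces $M/M_\pi\neq 0$; $H\not\subset S$ forces $N=N_\pi$; and $D\not\subset S$ forces $\pi(M_\pi\times N_\pi)=0$. The last two combine to $\pi(M_\pi\times N)=0$, while Lemma \ref{lemma5}(ii) together with $M/M_\pi\neq 0$ yields $M/M_\pi\cong\nc$. Choosing a surjective $\bW$-equivariant map $\phi:M\to\nc$ with kernel $M_\pi$, the map $\pi$ descends through $(M/M_\pi)\otimes N$, giving a unique linear $\psi:N\to P$ with $\pi(x,y)=\phi(x)\psi(y)$. Since $\phi(\xi.x)=0$ (as $\nc$ is the trivial $\bW$-module), the $\bW$-equivariance of $\pi$ evaluated at any $x$ with $\phi(x)\neq 0$ forces $\xi.\psi(y)=\psi(\xi.y)$, so $\psi$ is automatically $\bW$-equivariant.

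The main obstacle is the final step: showing that $\dim\Ker\,\psi\leq 1$. Since $\phi$ is concentrated in weight $0$, one has $\Supp\,\pi\cap V=\{0\}\times\Supp\,\psi$, and $\overline{\Supp\,\pi}=V$ forces $\Supp\,\psi$ to be Zariski-dense in $\nc$, hence $\Image\,\psi$ is infinite-dimensional. By hypothesis $N$ is indecomposable, so by the Kaplansky--Santharoubane Theorem its proper non-zero submodules are exhausted by $\overline A\subset A_\xi$ and $\nc\subset B_\xi$. The kernel $\Ker\,\psi$ is therefore one of $0$, $\overline A$, or $\nc$; the option $\Ker\,\psi=\overline A$ is excluded, for it would force $\Image\,\psi\cong N/\overline A\cong\nc$ to be finite-dimensional. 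Hence $\Ker\,\psi$ has dimension at most $1$, i.e.\ $\psi$ is an almost-isomorphism, completing the plan.
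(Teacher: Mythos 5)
Your proof is correct and fills in exactly the details the paper elides (the paper simply declares this lemma obvious): the reduction via Lemma \ref{lemma5} and Lemma \ref{lemma_support} to a factorization through $(M/M_\pi)\otimes N$ with $M/M_\pi\cong\nc$, the automatic $\bW$-equivariance of $\psi$, and the exclusion of $\Ker\,\psi=\overline A$ by the infinite-dimensionality of $\Image\,\psi$ are all sound and are the intended route. No gaps.
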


Lemma \ref{lemma_deg-V} is obvious.

In order to investigate the case where $S$ contains two lines, 
it is necessary to state a diagram chasing lemma.
Let $\fg$ be a Lie algebra, let $X$ be a $\fg$-module. For
$\xi\in H^1(\fg,M)$,  denote by $X_\xi$ the corresponding extension:

\centerline{$0\rightarrow X\rightarrow X_\xi\rightarrow \nc
\rightarrow 0$.}

\begin{lemma} \label{chasing}
Assume that $End_{\fg}(X)=\nc$ and that $X=\fg.X$.
Let  $\xi_1,\xi_2,\xi_3\in H^1(\fg,X)$ and set
$Z=X_{\xi_1}\otimes X_{\xi_2}/X\otimes X$. We have

\centerline{$\dim
\Hom_{\fg}(Z,X_{\xi_3})=3-r$,}

\centerline{$\dim
\Hom_{\fg}(Z,X)=2-s$,}

\noindent where $r$ is the rank of $\{\xi_1,\xi_2,\xi_3\}$
and $s$ is the rank of  $\{\xi_1,\xi_2\}$ in $H^1(\fg,M)$.
\end{lemma}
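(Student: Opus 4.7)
My plan is to realize $Z$ as a $\fg$-equivariant extension
\[
0\to X\oplus X\to Z\to \nc\to 0,
\]
identify its class in $H^1(\fg, X)^{\oplus 2}$, and then deduce both formulas from the long exact sequence obtained by applying $\Hom_\fg(-, W)$ for $W=X$ and $W=X_{\xi_3}$.

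To set up the extension, I fix lifts $v_i\in X_{\xi_i}$ of $1\in\nc$ and cocycles $c_i:\fg\to X$ with $g\cdot v_i=c_i(g)$ (so $[c_i]=\xi_i$). The subspaces
$Y_1:=(X_{\xi_1}\otimes X)/(X\otimes X)$ and
$Y_2:=(X\otimes X_{\xi_2})/(X\otimes X)$
of $Z$ are $\fg$-isomorphic to $X$ via $y\mapsto\overline{v_1\otimes y}$ and $x\mapsto\overline{x\otimes v_2}$; they intersect trivially in $Z$, and the quotient $Z/(Y_1\oplus Y_2)$ is the trivial line spanned by $\overline{v_1\otimes v_2}$. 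Since
\[
g\cdot(v_1\otimes v_2)=c_1(g)\otimes v_2+v_1\otimes c_2(g),
\]
reading off the $Y_1$- and $Y_2$-components shows the extension class of $Z$ is $(\xi_2,\xi_1)\in H^1(\fg, X)^{\oplus 2}$.

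Applying $\Hom_\fg(-, W)$ produces
\[
0\to W^{\fg}\to\Hom_\fg(Z, W)\to\Hom_\fg(X\oplus X, W)\xrightarrow{\partial}H^1(\fg, W).
\]
The hypothesis $X=\fg\cdot X$ forces $\Hom_\fg(X,\nc)=0$; together with $\End_\fg(X)=\nc$ and the long exact sequence of $0\to X\to X_{\xi_3}\to\nc\to 0$, this yields $\Hom_\fg(X, X_{\xi_3})=\nc\cdot\iota$, where $\iota$ is the canonical inclusion. Hence $\Hom_\fg(X\oplus X, W)\cong\nc^2$ in both cases and the connecting map reads $\partial(a,b)=\iota_*(a\xi_2+b\xi_1)$. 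For $W=X$, $\iota_*=\id$, so $\dim\ker\partial=2-s$; assuming $X^{\fg}=0$ (valid for every $X\in\cS$ used in the paper), this gives $\dim\Hom_\fg(Z, X)=2-s$. For $W=X_{\xi_3}$, $\ker\iota_*=\nc\xi_3$ and a case check on whether $\xi_3$ is zero, inside $\operatorname{span}\{\xi_1,\xi_2\}$, or outside that span, combined with $\dim(X_{\xi_3})^{\fg}=1$ when $\xi_3=0$ and $0$ otherwise, yields the uniform total $3-r$.

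The main difficulty is the bookkeeping at the end: verifying that the three possible positions of $\xi_3$ relative to $\operatorname{span}\{\xi_1,\xi_2\}$ all produce the same dimension $3-r$ once the contribution of $(X_{\xi_3})^{\fg}$ is taken into account. A subtler point is the implicit hypothesis $X^{\fg}=0$; without it, one would have to add $\dim X^{\fg}$ to each formula, so it must be invoked explicitly in the proof.
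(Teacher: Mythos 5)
Your proof is correct, and it reaches the same linear-algebra core as the paper's, but by a different packaging. The paper argues bare-handed: it writes $Z=[\delta_1\otimes X]\oplus[X\otimes\delta_2]\oplus\nc(\delta_1\otimes\delta_2)$, uses $X=\fg.X$ and $\End_\fg(X)=\nc$ to force $\pi(\delta_1\otimes x)=\lambda x$, $\pi(x\otimes\delta_2)=\mu x$, writes $\pi(\delta_1\otimes\delta_2)=\nu\delta_3+x_0$, and observes that equivariance is exactly the condition $\lambda\xi_2+\mu\xi_1-\nu\xi_3\equiv 0$ in $H^1(\fg,X)$; the count $3-r$ then drops out uniformly because the target class $\xi_3$ enters symmetrically as a third coefficient $\nu$. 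You instead run the long exact sequence for $\Hom_\fg(-,W)$ applied to $0\to X\oplus X\to Z\to\nc\to 0$ and push forward along $\iota:X\hookrightarrow X_{\xi_3}$, which is why you need the three-way case discussion on the position of $\xi_3$ relative to $\mathrm{span}\{\xi_1,\xi_2\}$ (balanced against $\dim (X_{\xi_3})^\fg$); your cases do all come out to $3-r$, and your identification of the extension class of $Z$ as $(\xi_2,\xi_1)$ and of $\ker\iota_*=\nc\xi_3$ are both right. The homological route costs the case check but makes visible a point the paper's proof leaves implicit, and which you correctly flag: both arguments need $X^\fg=0$ (the paper's $x_0$ is only determined up to $X^\fg$), which fails for general $X$ satisfying the stated hypotheses (e.g.\ $X=B_\xi$) but holds for $X=\overline{A}$, the only case used.
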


\begin{proof}
For $i=1$ to $3$, there are elements $\delta_i\in X_{\xi_i}\setminus X$   such that
the map $x\in\fg\rightarrow x.\delta_i\in X$ is a cocycle of the class $\xi_i$.
Since $X_{\xi_i}=X\oplus\nc \delta_i$, we have
 
\noindent\centerline{$Z= [\delta_1\otimes X]\oplus [X\otimes\delta_2] \oplus
\nc(\delta_1\otimes\delta_2)$.}

Let  $\pi\in \Hom_{\fg}(Z,X_{\xi_3})$. 
Note that $\delta_1\otimes X$ is a submodule of $Z$ isomorphic with $X$.
Since $\fg.X=X$, we have $\pi(\delta_1\otimes X)\subset X$. Thus 
there exists $\lambda\in\nc$ such that 
$\pi(\delta_1\otimes x)=\lambda x$ for all $x\in X$. Similarly,
there exists $\mu\in\nc$ such that 
$\pi(x\otimes\delta_2)=\mu x$ for all $x\in X$. By definition,
we have $\pi(\delta_1\otimes\delta_2)=\nu\delta_3 +x_0$ for some
$\nu\in\nc$ and some $x_0$ in $X$. 

The $\fg$-equivariance of $\pi$ is equivalent to
the equation:

\noindent\centerline{ $\mu g.\delta_1+\lambda g.\delta_2
=\nu g.\delta_3  +g.x_0$ for all $g\in \fg$.}

\noindent Thus
$\dim\Hom_{\fg}(Z,X_{\xi_3})$ is
exactly the dimension of the space of triples $(\lambda,\mu,\nu)\in \nc^3$
such that
 
\centerline{$\lambda\delta_2 +\mu \delta_1  -\nu\delta_3\equiv 0$ in
$H^1(\fg,X)$,}

\noindent and the first assertion  follows. The second assertion is
similar.
\end{proof}

For  $\xi\in\np^1$ and $t\in\nc$,
define $\eta^{t}_{(\xi)}: A_{\xi}\times
A_{\xi}\rightarrow A_{\xi}$ by the formula 

\noindent\centerline{$\eta^{t}_{\xi}(m,n)=  \Res(m) n+ t \Res(n) m$,}

\noindent and recall that $\eta(\xi_1,\xi_2,\xi_3)$ is defined in Section $4.3$.

\begin{lemma}
Assume that $S=V\cup H$. 
Then  $\pi$ is conjugate to one  of the following:

(i) $\eta(\xi_1,\xi_2,\xi_3)$, for some $\xi_1,\xi_2,\xi_3\in\np^1$ with
$\xi_3\notin\{\xi_1,\xi_2\}$, or

(ii) $\eta^{t}_{\xi}$ for some $t\neq 0$ and $\xi\in\np^1$.
\end{lemma}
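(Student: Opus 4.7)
My plan is first to invoke Lemma \ref{lemma_support}: the hypothesis $S = V\cup H$ is equivalent to $M/M_\pi\cong\nc$, $N/N_\pi\cong\nc$, and $\pi(M_\pi\times N_\pi)=0$. Since among indecomposable modules of $\cS$ only the $A$-family admits $\nc$ as a quotient, this forces $M = A_{\xi_1}$ and $N = A_{\xi_2}$ for some $\xi_1,\xi_2\in\np^1$, with $M_\pi = N_\pi = \overline{A}$. The vanishing $\pi(M_\pi\times N_\pi)=0$ will then allow $\pi$ to descend to a $\bW$-equivariant map $\overline\pi : Z \to P$, where $Z := (A_{\xi_1}\otimes A_{\xi_2})/(\overline{A}\otimes\overline{A})$ fits into the extension $0\to\overline{A}\oplus\overline{A}\to Z\to\nc\to 0$ of class $(\xi_1,\xi_2)\in H^1(\bW,\overline{A})^2$. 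A short composition-factor analysis will then force $P = A_{\xi_3}$ for some $\xi_3\in\np^1$: the only irreducible quotient of $Z$ is $\nc$ (so irreducible tensor-density targets are ruled out), $\Hom_{\bW}(\overline{A},B_\xi)=0$ forces any map into a $B$-target to kill the socle and hence reduces $\pi$ to a trivial map (contradicting the support assumption), and indecomposability eliminates $\overline{A}\oplus\nc$.

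\textbf{Counting and the generic case.} I would then apply Lemma \ref{chasing} with $X = \overline{A}$ (noting $\End_{\bW}\overline{A}=\nc$ and $\bW\cdot\overline{A}=\overline{A}$) to obtain $\dim_{\nc}\Hom_{\bW}(Z,A_{\xi_3}) = 3 - r$, where $r := \rank\{\xi_1,\xi_2,\xi_3\}$ in $H^1(\bW,\overline{A})\cong\nc^2$. When $r = 2$, the 1-dimensional $\Hom$-space is spanned, up to a non-zero scalar, by $\eta(\xi_1,\xi_2,\xi_3)$, which is $\bW$-equivariant and non-zero by its construction in Section 4.3. Evaluating its defining formula $y\,\Res\times\mathrm{id} + x\,\mathrm{id}\times\Res$ at $(e_0^A,-)$ and $(-,e_0^A)$ will show that $V\subset\Supp\eta$ iff $y\neq 0$ iff $\xi_3\neq\xi_2$, and symmetrically $H\subset\Supp\eta$ iff $\xi_3\neq\xi_1$; the hypothesis $S=V\cup H$ therefore forces $\xi_3\notin\{\xi_1,\xi_2\}$, which is case (i).

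\textbf{The coincident case and main obstacle.} In the remaining case $r = 1$, forced by $\xi_1=\xi_2=\xi_3=\xi$, Lemma \ref{chasing} gives $\dim\Hom_{\bW}(Z,A_\xi)=2$. This space will be spanned by the visibly $\bW$-equivariant maps $\pi_1(m,n)=\Res(m)\,n$ and $\pi_2(m,n)=\Res(n)\,m$, which are linearly independent since $\Supp\pi_1 = V \neq H = \Supp\pi_2$. Writing $\pi = a\pi_1 + b\pi_2$, the equality $\Supp\pi = V\cup H$ forces both $a$ and $b$ non-zero, and rescaling identifies $\pi$ with $a\,\eta^{t}_\xi$ for $t := b/a\in\nc^\ast$, which is case (ii). The main obstacle I anticipate is the support bookkeeping in the case $r=2$: isolating which degenerations of $(\xi_1,\xi_2,\xi_3)$ collapse $\Supp\eta(\xi_1,\xi_2,\xi_3)$ to a single line $V$ or $H$, and hence fall outside the hypothesis $S = V\cup H$, rather than a technical difficulty in invoking Lemma \ref{chasing}.
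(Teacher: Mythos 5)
Your proposal is correct and follows essentially the same route as the paper: Lemma \ref{lemma_support} forces $M\simeq A_{\xi_1}$, $N\simeq A_{\xi_2}$ and then $P\simeq A_{\xi_3}$, the diagram-chasing Lemma \ref{chasing} applied to $Z=(A_{\xi_1}\otimes A_{\xi_2})/(\overline A\otimes\overline A)$ gives the dimension count $3-r$ with $r\in\{1,2\}$, and the support hypothesis $S=V\cup H$ pins down $\xi_3\notin\{\xi_1,\xi_2\}$ in the rank-two case and $t\neq 0$ in the coincident case. The only blemishes are cosmetic and do not affect the conclusion: the labels are swapped in ``$V\subset\Supp\,\eta$ iff $\xi_3\neq\xi_2$'' (since $\eta(e_0^M,e_t^N)=y\,e_t$ and $y=0$ exactly when $\xi_3=\xi_1$, one has $V\subset\Supp\,\eta$ iff $\xi_3\neq\xi_1$, and symmetrically for $H$), and when $\xi_1,\xi_2$ are proportional $Z$ also admits $\overline A$ as an irreducible quotient, which is harmless because $\overline A$ is not a module of the class $\cS$.
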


\begin{proof}
By Lemma \ref{lemma_support} we have 
$\pi(M_\pi\times N_\pi)=0$, $M/M_\pi=\nc$ and 
$N/N_\pi=\nc$. It follows that
$M\simeq A_{\xi_1}$ and $N\simeq A_{\xi_2}$ for some
$\xi_1,\xi_2\in\np^1$.
Thus $(M/M_{\pi})\otimes N_{\pi}$ is isomorphic to
$\overline {A}$. Since  $\pi$
induces a non-zero map
$(M/M_{\pi})\otimes N_{\pi}\rightarrow P$, the $\bW$-module
$P$ contains $\overline{A}$, hence $P$ is isomorphic
to $A_{\xi_3}$ for some non-zero $\xi_3\in\np^1$.

Let $B=\{\mu\in \B_{\bW}(M\times N,P)\vert \mu(M_\pi\times N_\pi)=0\}$. It follows from
the Kaplansky-Santharoubane Theorem that
$\dim H^1(W,\overline A)=2$.
Thus if  $\xi_1,\xi_2,\xi_3$
are not all equal, it follows from Lemma \ref{chasing}
that $B=\nc\, \eta(\xi_1,\xi_2,\xi_3)$. However $\Supp\,\eta(\xi_1,\xi_2,\xi_3)$
lies inside $H$ or $V$ if $\xi_1=\xi_3$ or $\xi_2=\xi_3$. Hence we have
$\xi_3\notin\{\xi_1,\xi_2\}$
and  $\pi$ is conjugate to
$\eta(\xi_1,\xi_2,\xi_3)$. Similarly, if all $\xi_i$ are equal to some
$\xi\in \np^1$, then
$B$ is the two dimensional vector space generated by the affine line
$\{\eta^{t}_{\xi}\vert\,t\in\nc\}$. Thus $\pi$ is conjugate to some
$\eta^t_\xi$. Moreover the hypothesis $\Supp\,\pi= H\cup V$ implies
that $t\neq 0$.
\end{proof}

\begin{lemma}\label{lemma_class-deg2}
Assume that $S=V \cup H \cup D$.
Then, $\pi$ is conjugate to $\Theta_{\xi}$ for some $\xi\in \np^1$, 
modulo a trivial map.
\end{lemma}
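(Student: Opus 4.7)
\textit{Proof plan.} My first step is to identify $M$, $N$, and $P$. By Lemma~\ref{lemma_support}(ii)--(iv), the hypothesis $S=V\cup H\cup D$ forces $M/M_\pi$, $N/N_\pi$, and $\pi(M_\pi\times N_\pi)$ all to be non-zero, hence each one-dimensional by Lemma~\ref{lemma5}. Since $M$ and $N$ must therefore carry a one-dimensional quotient and $P$ a one-dimensional invariant submodule, the Kaplansky--Santharoubane theorem together with the indecomposability hypothesis yields $M\simeq A_{\xi_1}$, $N\simeq A_{\xi_2}$, $P\simeq B_{\xi_3}$ for some $\xi_i\in\np^1$ with projective coordinates $(a_i,b_i)$.

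Second, I parameterize $\pi$ in the Kaplansky--Santharoubane bases by writing $\pi(u^A_m,u^A_n)=\sigma_{m,n}u^B_{m+n}$ and setting $\mu_n=\sigma_{0,n}$, $\nu_m=\sigma_{m,0}$ for $m,n\neq 0$. The restriction $\pi|_{\overline A\otimes\overline A}$ lands in $P^{\bW}=\nc u^B_0$. Using the short exact sequences $0\to\overline A\to A\to\nc\to 0$ and $0\to\nc\to B\to\overline A\to 0$, one checks that $\overline A$ is simple and self-dual, so $\Hom_{\bW}(\overline A\otimes\overline A,\nc)$ is one-dimensional, spanned by a form $\omega$ which $\bW$-equivariance forces (up to scalar) to satisfy $\omega(u^A_m,u^A_{-m})=1/m$. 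Hence $\sigma_{m,-m}=\tilde c/m$ and $\sigma_{-m,m}=-\tilde c/m$ for a unique $\tilde c\in\nc$, and $\tilde c\neq 0$ because $D\subset S$.

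Third, and this is the main step, I show $\xi_1=\xi_2=\xi_3$. Evaluating the $\bW$-equivariance identity for $L_{-n}\pi(u^A_0,u^A_n)$ produces the closed formula $\mu_n=-\tilde c(a_1n-b_1)/[n(a_3n-b_3)]$, and symmetrically for $\nu_m$. Evaluating $L_{-m-n}\pi(u^A_m,u^A_n)$ for $m,n,m+n\neq 0$, the boundary contributions $-n\sigma_{-n,n}$ and $-m\sigma_{m,-m}$ cancel, collapsing the equation to $\sigma_{m,n}(m+n)(a_3(m+n)-b_3)=0$; so $\sigma_{m,n}=0$ for generic such $(m,n)$. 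Feeding this into the generic recursion $n\mu_n=(n+k)\mu_{n+k}+(a_1k^2+b_1k)\sigma_{k,n}$ coming from $L_k\pi(u^A_0,u^A_n)$ gives $n\mu_n=(n+k)\mu_{n+k}$, so $\mu_n=\alpha/n$ for some non-zero constant $\alpha$. Comparing with the closed formula yields the polynomial identity $-\tilde c(a_1n-b_1)=\alpha(a_3n-b_3)$ in $n$, which forces $(a_1,b_1)\propto(a_3,b_3)$; symmetrically $(a_2,b_2)\propto(a_3,b_3)$. Thus $\xi_1=\xi_2=\xi_3=:\xi$.

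Finally, with the three parameters equal, the values $\mu_n=-\tilde c/n$, $\nu_m=\tilde c/m$, $\sigma_{m,-m}=\tilde c/m$, and $\sigma_{m,n}=0$ for the remaining $m,n\neq 0$ with $m+n\neq 0$ coincide, up to the overall scalar $-\tilde c$, with the defining values of $\Theta_\xi$ from Section~\ref{sect_example}. The only residual freedom is $\lambda=\pi(u^A_0,u^A_0)\in\nc u^B_0$, which is precisely the coefficient of a trivial map. I expect the main obstacle to be the careful bookkeeping of $\bW$-equivariance across the many boundary cases (indices where one of $m+k$, $n+k$, $m+n+k$, or $a_3(m+n)-b_3$ vanishes for small integer $m+n$); the generic and boundary equations must all be verified consistently so that no hidden obstruction to such a $\pi$ is missed.
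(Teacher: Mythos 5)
Your proof is correct, but it takes a genuinely different route from the paper's. Both proofs begin identically, using Lemma \ref{lemma_support} and Lemma \ref{lemma5} to force $M\simeq A_{\xi_1}$, $N\simeq A_{\xi_2}$, $P\simeq B_{\xi_3}$. From there the paper argues structurally: it pushes $\pi$ down to a non-zero map $Z=A_{\xi_1}\otimes A_{\xi_2}/\overline A\otimes\overline A\rightarrow\overline A$ and invokes the diagram-chasing Lemma \ref{chasing} (a rank condition in $H^1(\bW,\overline A)\cong\nc^2$) to get $\xi_1=\xi_2$, obtains $\xi_3=\xi_1$ by the $\fS_3$-symmetry ($B_{\xi_3}\cong A_{\xi_2}^\ast$), and then concludes via the exact sequence $0\rightarrow\B_{\bW}(A_\xi\times A_\xi,\nc)\rightarrow\B^0_{\bW}(A_\xi\times A_\xi,B_\xi)\rightarrow\Hom_{\bW}(Z,B_\xi/\nc)$ together with the one-dimensionality of the last term established in the preceding lemma. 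You instead work entirely in the Kaplansky--Santharoubane bases: your polynomial identity $-\tilde c(a_1n-b_1)=\alpha(a_3n-b_3)$, obtained by comparing the closed formula for $\mu_n$ from the $L_{-n}$ relation with the $1/n$-decay forced by the $L_k$ recursion, is an elementary and self-contained substitute for the cohomological rank argument, and your explicit determination of all coefficients replaces the paper's dimension count. What the paper's route buys is reuse of machinery (Lemma \ref{chasing} also handles the $S=V\cup H$ case) and no index bookkeeping; what yours buys is independence from Lemma \ref{chasing} and an explicit identification of the scalar. Two small remarks: the vanishing $\sigma_{m,n}=0$ for $m,n,m+n\neq 0$ is already immediate from Lemma \ref{lemma5}(i) (as your second step notes), so the $L_{-m-n}$ computation in your third step is redundant; and your closing concern about ``hidden obstructions'' is moot for this lemma, since $\Theta_\xi$ is already known from Section \ref{sect_example} to be $\bW$-equivariant --- only the uniqueness direction is needed, and for that the generic relations suffice (the at most one exceptional integer with $a_3n=b_3$ is harmless because the comparison of the two formulas for $\mu_n$ only requires infinitely many $n$).
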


\begin{proof}
It follows from Lemma \ref{lemma_support} that we have 
$\pi(M_\pi \times N_\pi) \cong \nc, M/M_\pi \cong \nc$ and $N/N_\pi \cong
\nc$. Thus $M=A_{\xi_1}, N=A_{\xi_2}$ and $P=B_{\xi_3}$ form some
 $\xi_i \in \np^1$.

Set $Z=A_{\xi}\otimes A_{\xi}/\overline{A}\otimes \overline{A}$.
By Lemma \ref{lemma_support}, the composition of
any $\mu\in \B^0_{\bW}(A_{\xi_1}\times A_{\xi_2}, B_{\xi_3})$
with the map $B_{\xi_3}\rightarrow\overline{A}$
provides a linear map  ${\overline\mu}: Z\rightarrow \overline A$.
Since $\overline{\mu}$ is not zero, 
Lemma \ref{chasing} implies that $\xi_1=\xi_2$.  By the
$\fS_3$-symmetry,  $B_{\xi_3}$ is the restricted dual of $A_{\xi_2}$, so we have
$\xi_3=\xi_1$. Set $\xi:=\xi_1=\xi_2=\xi_3$ 
and  consider the following exact sequence

\noindent\centerline
{$0 \rightarrow \B_{\bW}(A_\xi\times A_\xi, \nc) \rightarrow
\B_{\bW}^0(A_\xi\times A_\xi, B_{\xi}) 
\rightarrow \Hom_{\bW}(Z,B_{\xi}/\nc)$,}

\noindent where the last arrow is the map $\mu\mapsto\overline{\mu}$.

It is clear that the subspace $\B_{\bW}(A_{\xi}\times A_{\xi}, \nc)$
of  $\B_{\bW}^{0}(A_{\xi}\times A_{\xi}, B_{\xi})$ is 
the space of trivial bilinear maps. By the previous lemma, 
$\Hom_{\bW}(Z,B_{\xi}/\nc)$ has dimension one. It follows from its definition that
$\Supp \Theta_\xi=H\cup V\cup D$. Hence we have

\noindent\centerline{$\B_{\bW}^{0}(A_{\xi}\times A_{\xi}, B_{\xi})=
\B_{\bW}(A_{\xi}\times A_{\xi}, \nc)\oplus \nc \Theta_\xi$.}

\noindent Hence $\pi$ is conjugate to 
$\Theta_{\xi}$ modulo a trivial map.
\end{proof}

\begin{thm} 
 Let $M,N$ and $P$ indecomposable modules of the class $\cS$
and let $\pi:M\times N\rightarrow P$ be a $\bW$-equivariant degenerate bilinear map.
Up to the $\fS_3$-symmetry, $\pi$ is conjugate to one of the following:

(i) a trivial bilinear map $\pi:A_{\xi_1}\times A_{\xi_2}\rightarrow B_{\xi_3}$,

(ii) the map $\pi:A_{\xi}\times N\rightarrow P$,
$(m,n)\mapsto \Res(m)\psi(n)$ where 
$\xi\in\np^1$ and where $\psi:N\rightarrow P$ is an
almost-isomorphism,

(iii) the map $\eta(\xi_1,\xi_2,\xi_3)$ for some 
$\xi_1,\xi_2$ and $\xi_3\in  \np^1$  with
$\xi_3\notin\{\xi_1,\xi_2\}$,
 or the map
$\eta^{t}_{\xi}$ for some
$\xi\in \np^1$  and $t\neq 0$,

(iv) $\Theta_{\xi}+\tau$, where $\xi\in\np^1$ and $\tau$ is a trivial map.

\end{thm}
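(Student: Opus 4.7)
The plan is to assemble the four preceding lemmas via a case analysis on $S := \overline{\Supp\,\pi}$. First I invoke Lemma \ref{lemma_support}, which yields $\Supp\,\pi \subset H \cup D \cup V$. Hence $S$ is a union of some subset of these three lines. Next, using the $\fS_3$-symmetry afforded by Lemma \ref{lemma_symmetry-bilinear} (which identifies $\B_{\bW}(M \times N, P^*)$ as fully symmetric in $M,N,P$), the three lines $H,V,D$ are exchanged by the $\fS_3$-action on bilinear maps. Therefore, it suffices to treat the four representative cases $S = \emptyset$, $S = V$, $S = H \cup V$, and $S = H \cup D \cup V$, and the general case follows by applying an $\fS_3$-permutation.

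For each representative case I apply the corresponding lemma of this section. If $S = \emptyset$, the first (obvious) lemma forces $\pi$ to be trivial, which gives conclusion (i); note also that nonzero trivial maps only exist when $M,N$ lie in the $A$-family and $P$ in the $B$-family, so the indecomposability hypothesis forces $M = A_{\xi_1}$, $N = A_{\xi_2}$, $P = B_{\xi_3}$ for some $\xi_i \in \np^1$. If $S = V$, Lemma \ref{lemma_deg-V} produces a surjection $\phi : M \to \nc$ and an almost-isomorphism $\psi : N \to P$ with $\pi(x,y) = \phi(x)\psi(y)$; the existence of a nonzero surjection onto $\nc$ forces $M \simeq A_\xi$ and $\phi$ to be a scalar multiple of $\Res$, so we obtain conclusion (ii). If $S = H \cup V$, the penultimate lemma is exactly conclusion (iii). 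If $S = H \cup D \cup V$, Lemma \ref{lemma_class-deg2} gives conclusion (iv).

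The main potential obstacle is purely organizational: verifying that the $\fS_3$-reduction is compatible with the stated conclusions, i.e.\ that each of the four families of maps in (i)--(iv) is stable under the $\fS_3$-symmetry up to the indicated conjugacy, so that permuting $H,V,D$ back to the true support only permutes the roles of $M,N,P$ in the classification. This is transparent from the definitions of the trivial maps, the $\eta$- and $\Theta$-families, and of almost-isomorphisms, but should be stated to complete the argument. Finally, one should observe that the four cases are mutually exclusive up to $\fS_3$, so no overlap accounting is required, and assembling the pieces completes the proof.
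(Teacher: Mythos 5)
Your proposal is correct and is essentially the paper's own argument: the section is organized precisely as you describe (Lemma \ref{lemma_support} bounds the support, the $\fS_3$-symmetry reduces to the four representative cases $S=0$, $V$, $H\cup V$, $H\cup V\cup D$, and the four lemmas handle each case), with the theorem stated as the immediate consequence. The only cosmetic difference is that the paper writes the first case as $S=\{(0,0)\}$ rather than $S=\emptyset$, which does not affect the argument.
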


The following table is another presentation of Theorem 1. To limit
the number of cases, the list is given up to the $\fS_3$-symmetry.
That is why the datum in the third column is $P^*$ and not $P$.

\begin{table}[h]
\caption{\textbf{List, up to the $\fS_3$-symmetry, of possible $S$ and $d$,
where $d=\dim \B_{\bW}^0(M\times N, P)$ and
$S=\overline{\Supp\,\pi}$ for  
$\pi\in \B_{\bW}^0(M\times N, P)$.}}
\label{table2}

\begin{center}

\begin{tabular}{|c|c|c|c|c|c|} \hline
&$M\times N$ & $P^*$& $S$&d
& Restrictions \\ \hline

1.& $A_{\xi_1}\times A_{\xi_2}$ & $A_{\xi_3}$ & $0$& $1$
&$Card \{\xi_1,\xi_2,\xi_3\}\geq 2$\\ \hline

2.& $A_{\xi}\times X$ & $X^*$ & $V$& $1$
&$X\not\simeq A_\xi$ or $B_\xi$
\\ \hline

3.& $A_{\xi_1}\times B_{\xi_2}$ & $B_{\xi_3}$ & $V$& $1$&
\\ \hline

4.& $A_{\xi_1}\times A_{\xi_2}$ & $B_{\xi_3}$ & $H\cup V$
& $1$ &$\xi_3\notin\{\xi_1,\xi_2\}$
\\ \hline

5.& $A_{\xi}\times A_{\xi}$ & $B_{\xi}$& 
$H$, $V$ or  $H\cup V$&$2$ &
\\ \hline

6.& $A_{\xi}\times A_{\xi}$ & $A_{\xi}$& 
$0$ or  $H\cup V\cup D$&$2$ &
\\ \hline

\end{tabular}
\end{center}

\small{ Except the indicated restrictions, $X\in\cS$ is  arbitrary 
and $\xi,\xi_1,\xi_2$ and $\xi_3$ are arbitrary.}
\end{table}

\section{Bounds for the dimension of the spaces of germs of bilinear maps}
Let  $M,N$ and $P$ be $\bW$-modules of the class $\cS$. 
In this section we introduce a space 
$\tilde \cG_{\fsl(2)}(M\times N, P)$ which is a
good approximation of $\cG_{\bW}(M\times N, P)$. 
Indeed we have:

\noindent\centerline
{$\cG_{\bW}(M\times N\, P)\subset
\tilde \cG_{\fsl(2)}(M\times N\, P)
\subset \cG_{\fsl(2)}(M\times N, P)$.}

\subsection{On $\dim \cG_{\fsl(2)}(M\times N,P)$}

For an element $\gamma\in\nc^2$, set 
$L.\gamma=\gamma+(1,0)$ and $R.\gamma=\gamma+(0,1)$. Similarly,
for a pair $\{\alpha,\beta\}$ of elements of
$\nc^2$, set $L.\{\alpha,\beta\}=\{L.\alpha,L.\beta\}$ and
$R.\{\alpha,\beta\}=\{R.\alpha,R.\beta\}$. 
The pair $\{\alpha,\beta\}$  is called {\it adjacent} if
$\beta=\alpha+(1,-1)$ or $\alpha=\beta+(1,-1)$. 
Thus $L.\{\alpha,\beta\}$ and 
$R.\{\alpha,\beta\}$ are adjacent whenever $\{\alpha,\beta\}$ is adjacent.
Given $(x,y)\in\nc^2$, let 
$C(x,y)$ be the set of all elements of $\nc^2$ of the form
$(x+m, y+n)$ with $m,n\in\nz_{\geq 0}$ and
$(m,n)\neq (0,0)$.

Let $\delta_1,\,\delta_2$ and $\gamma$ be scalar, and let
$u$ and  $v$ be $\nz$-cosets. Let 
$\pi:\Omega^{\delta_1}_{u}\times
\Omega^{\delta_2}_{v}\rightarrow
\Omega^{\gamma}_{u+v}$ be an $L_0$-equivariant bilinear map.
Let $(e_x^{\delta_1})_{x\in{u}}$ be the basis of
$\Omega^{\delta_1}_{u}$ defined in Section 1.
Similarly
denote by $(e_y^{\delta_2})_{y\in{v}}$ and
$(e_z^{\gamma})_{z\in{u+v}}$ the corresponding bases
of $\Omega^{\delta_2}_{v}$ and
$\Omega^{\gamma}_{u+v}$.

Since $\pi$ is $L_0$-equivariant, there exists a function
$X: u\times v\rightarrow\nc$ defined by the identity:

\noindent\centerline{$\pi(e_x^{\delta_1},e_y^{\delta_2})=X(x,y)
e_{x+y}^{\gamma}$.}

\begin{lemma}\label{tech} 
Assume there exists $(x,y)\in u\times v$ such that:

(i) $\Supp (L_{\pm1}.\pi) \cap C(x,y)=\emptyset$,

(ii)  $\Re x>\pm\Re\delta_1$, 
$\Re y>\pm\Re\delta_2$, $\Re (x+y)>\pm\Re\gamma$, and

(iii) $X(x+1,y)=X(x,y+1)=0$.

Then $\cG(\pi)=0$
\end{lemma}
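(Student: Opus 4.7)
The plan is to show $\pi \in \B^0(M \times N, P)$. Introduce the scalar function $X : u \times v \to \nc$ defined by $\pi(e^{\delta_1}_a, e^{\delta_2}_b) = X(a, b)\, e^{\gamma}_{a+b}$. Once one verifies that $X(x+m, y+n) = 0$ for every $(m, n) \in \nz_{\geq 0}^2 \setminus \{(0,0)\}$, choosing any real $r > \max(\Re x, \Re y)$ forces $a \geq r, b \geq r$ to come from $m, n \geq 1$, so $\pi(M_{\geq r} \times N_{\geq r}) = 0$ and hence $\cG(\pi) = 0$.

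The first step is to unpack hypothesis (i) into scalar recurrences. Using $L_1 . e^{\delta}_z = (\delta + z) e^{\delta}_{z+1}$ and $L_{-1} . e^{\delta}_z = (z - \delta) e^{\delta}_{z-1}$, the vanishing of $(L_{\pm 1}.\pi)$ at a point $(a, b) \in C(x, y)$ reads
\begin{align*}
(\gamma + a + b)\, X(a, b) &= (\delta_1 + a)\, X(a+1, b) + (\delta_2 + b)\, X(a, b+1), \\
(a + b - \gamma)\, X(a, b) &= (a - \delta_1)\, X(a-1, b) + (b - \delta_2)\, X(a, b-1).
\end{align*}
Hypothesis (ii) guarantees that each of the scalars $\delta_1 + a$, $\delta_2 + b$, and $\pm \gamma + a + b$ has strictly positive real part, hence is nonzero, whenever $(a, b) = (x + m, y + n)$ with $m, n \geq 0$.

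Now induct on $s = m + n \geq 1$; the base $s = 1$ is exactly hypothesis (iii). At level $s \geq 2$ the crucial organisational point is to treat interior pairs before boundary pairs. For an interior pair with $m, n \geq 1$, apply the $L_{-1}$-recurrence at $(x+m, y+n)$: both right-hand values sit at level $s-1$ and at lattice points different from $(0,0)$, so they vanish by the induction hypothesis, while the leading coefficient is nonzero, giving $X(x+m, y+n) = 0$. For a boundary pair, say $n = 0$ with $m \geq 2$ (the case $m=0$ being symmetric), the $L_{-1}$-recurrence at $(x+m, y)$ is useless because it calls for $X(x+m, y-1)$, a value outside the region $C(x, y)$ over which we have no control. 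Instead use the $L_1$-recurrence at $(x+m-1, y) \in C(x, y)$: its left-hand side vanishes by induction (level $s-1$), the term $X(x+m-1, y+1)$ on the right is an interior level-$s$ value already handled in the first sub-step, and the remaining coefficient $\delta_1 + x + m - 1$ is nonzero, forcing $X(x+m, y) = 0$.

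The proof is essentially careful bookkeeping on a quadrant of lattice points, and the only place a genuine choice has to be made — the one mild obstacle — is on the boundary of that quadrant, where the $L_{-1}$-relation leaves the controlled region $C(x,y)$ and so must be traded for the $L_1$-relation at the neighbouring interior point. This is why the interior-before-boundary ordering within each level $s$ is essential.
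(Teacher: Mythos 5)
Your proof is correct and follows essentially the same route as the paper: the two scalar recurrences coming from $L_{\pm 1}$, the use of hypothesis (ii) to make the relevant coefficients nonzero, and a double induction that first kills the interior lattice point via the $L_{-1}$-relation and then pushes out to the boundary via the $L_1$-relation at the adjacent point. The paper organizes the induction by adjacent pairs under the shift operators $R^kL^l$ rather than by anti-diagonal levels, but this is only a difference of bookkeeping.
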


\begin{proof}

{\it First step:} We claim that 
for any adjacent pair $\{\alpha,\beta\}$ in 
$C(x,y)$ with $X(\alpha)=X(\beta)=0$, then $X$ vanishes on 
$R^k.L^l.\{\alpha,\beta\}$ for any $k,l\in\nz_{\geq 0}$.

First prove that $X$ vanishes on $L. \{\alpha,\beta\}$
Indeed we can assume that $\alpha=\beta+(1,-1)$, and therefore we have
 $\alpha=(x'+1,y')$, $\beta=(x',y'+1)$ for some $(x',y')\in \nc^2$.

Since $L_{-1}.\pi(e_{x'+1}^{\delta_1},e_{y'+1}^{\delta_2})$
is a linear combination of $\pi(e_{x'}^{\delta_1},e_{y'+1}^{\delta_2})$
and 

\noindent $\pi(e_{x'+1}^{\delta_1},e_{y'}^{\delta_2})$, we get

\noindent\centerline{$0= L_{-1}.\pi(e_{x'+1}^{\delta_1},e_{y'+1}^{\delta_2})=
X(x'+1,y'+1)L_{-1}. e_{x'+y'+2}^{\gamma}$.}

\noindent  Since $\Re (x'+y'+2-\gamma)>0$, we get 
$L_{-1}. e_{x'+y'+2}^{\gamma}\neq 0$ and therefore $X(x'+1,y'+1)=0$.
Moreover we have

\noindent\centerline{$0=L_{1}.\pi(e_{x'+1}^{\delta_1},e_{y'}^{\delta_2})
=\pi(L_1.e_{x'+1}^{\delta_1},e_{y'}^{\delta_2})
+\pi(e_{x'+1}^{\delta_1},L_1.e_{y'}^{\delta_2})$.}

\noindent Using that $X(x'+1,y'+1)=0$, it follows
that $\pi(L_1.e_{x'+1}^{\delta_1},e_{y'}^{\delta_2})=0$.
Since $\Re(x'+1+\delta_1)>0$, we have 
$L_1.e_{x'+1}^{\delta_1}\neq 0$ and therefore 
$X(x'+2,y')=0$. Hence $X$ vanishes on $L.\{\alpha,\beta\}$.

Similarly, $X$ vanishes on $R.\{\alpha,\beta\}$. It follows by induction that
$X$ vanishes on  $R^k.L^l.\{\alpha,\beta\}$ for any $k,l\in\nz_{\geq 0}$.

{\it Second step:} Set $\alpha=(x+1,y)$ and $\beta=(x,y+1)$. 
We have $C(x,y)=\cup_{k,l\in\nz_{\geq 0}}  \,R^k.L^l.\{\alpha,\beta\}$.
It follows that $X$ vanishes on $C(x,y)$. Hence $\cG(\pi)=0$.

\end{proof}

Let 
$\pi\in \B_{L_0}(\Omega^{\delta_1}_{u}\times
\Omega^{\delta_2}_{v}, \Omega^{\gamma}_{u+v})$. As before, set 

\noindent\centerline{$\pi(e_x^{\delta_1},e_y^{\delta_2})=X(x,y)
e_{x+y}^{\gamma}$.}

\begin{lemma}\label{consecutive} 
Assume that $\cG(\pi)$ is $\fsl(2)$-equivariant and 
non-zero.  Then there exists $(x_0,y_0)\in u\times v$
such that 

\noindent\centerline{$X(\alpha)\neq 0$ or $X(\beta)\neq 0$,}

\noindent  for any adjacent pair
$\{\alpha,\beta\}$ in $C(x_0,y_0)$.
\end{lemma}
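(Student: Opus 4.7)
The plan is to prove the contrapositive via Lemma \ref{tech}. Assume $\cG(\pi)$ is $\fsl(2)$-equivariant and non-zero, and suppose for contradiction that no $(x_0,y_0) \in u \times v$ with the claimed property exists, i.e.\ every cone $C(x_0,y_0)$ contains an adjacent pair on which $X$ vanishes. The idea is to extract such a pair at a point where the three hypotheses of Lemma \ref{tech} are satisfied; its conclusion $\cG(\pi)=0$ then contradicts our standing assumption.

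First I would calibrate a threshold $r$. Since $\cG(\pi)$ is $\fsl(2)$-equivariant, the maps $L_{\pm 1}.\pi$ have zero germ, so each belongs to $\B^0$. Hence there is an $r_0 \in \nr$ such that $(L_{\pm 1}.\pi)(e_\alpha^{\delta_1}, e_\beta^{\delta_2})=0$ whenever $\Re\alpha \geq r_0$ and $\Re\beta \geq r_0$; equivalently, $\Supp(L_{\pm 1}.\pi)$ is disjoint from the quadrant $\{(\alpha,\beta) : \Re\alpha \geq r_0,\ \Re\beta \geq r_0\}$. Choose $r > \max(r_0, |\Re\delta_1|, |\Re\delta_2|, \tfrac{1}{2}|\Re\gamma|)$ and pick any $(x_0,y_0) \in u\times v$ with $\Re x_0, \Re y_0 > r$. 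By the contradiction hypothesis, $C(x_0,y_0)$ contains an adjacent pair $\{(x+1,y),(x,y+1)\}$ with $X(x+1,y)=X(x,y+1)=0$. Writing $(x+1,y)=(x_0+m_1,y_0+n_1)$ and $(x,y+1)=(x_0+m_2,y_0+n_2)$ with $m_i,n_i \geq 0$, consistency forces $m_1 \geq 1$ and $n_1 \geq 0$; therefore $\Re x \geq \Re x_0 > r$ and $\Re y \geq \Re y_0 > r$.

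Finally I would verify the three hypotheses of Lemma \ref{tech} at the point $(x,y)$: condition (ii) is immediate from $\Re x, \Re y > r > |\Re\delta_1|,|\Re\delta_2|$ and $\Re(x+y)>2r>|\Re\gamma|$; condition (i) holds because $C(x,y) \subset \{(\alpha,\beta) : \Re\alpha \geq \Re x > r_0,\ \Re\beta \geq \Re y > r_0\}$, which is disjoint from $\Supp(L_{\pm 1}.\pi)$ by the choice of $r_0$; and condition (iii) is the vanishing of $X$ on the extracted adjacent pair. Lemma \ref{tech} then yields $\cG(\pi)=0$, contradicting the hypothesis.

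This argument is really bookkeeping around Lemma \ref{tech}, and no serious obstacle arises. The only point to watch is that the positivity requirements of Lemma \ref{tech} refer to the base point $(x,y)$ rather than to the adjacent pair itself, so one must verify that the real parts of $x,y$ are still large after shifting down from the pair in $C(x_0,y_0)$; this is exactly what the computation $\Re x \geq \Re x_0$ and $\Re y \geq \Re y_0$ records, and it is the reason we insisted on strict inequalities $\Re x_0,\Re y_0 > r$.
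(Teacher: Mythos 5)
Your proof is correct and follows essentially the same route as the paper: both arguments choose $(x_0,y_0)$ far enough out that the hypotheses (i) and (ii) of Lemma \ref{tech} hold at the base point $(x,y)$ of any adjacent pair in the cone, and then invoke that lemma to rule out simultaneous vanishing of $X$ on the pair. Your contrapositive packaging and the explicit bookkeeping showing $\Re x\geq \Re x_0$, $\Re y\geq \Re y_0$ are just a more detailed rendering of the paper's one-line observation that ``the couple $(x,y)$ satisfies the conditions (i) and (ii) of the previous lemma.''
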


\begin{proof}
 Since  $\cG(\pi)$ is $\fsl(2)$-equivariant, we can choose 
$(x_0,y_0)\in u\times v$ such that:

(i) $\Supp L_{\pm1}\circ \pi\cap C(x_0,y_0)=\emptyset$

\noindent Moreover we can assume that $\Re x_0$ and 
$\Re y_0$ are big enough in order that:

(ii) $\Re x_0>\pm\Re\delta_1$, 
$\Re y_0>\pm\Re\delta_2$, $\Re (x_0+y_0)>\pm\Re\gamma$.

Let $\{(x+1,y),(x,y+1)\}$ be an adjacent
pair in $C(x_0,y_0)$. The couple $(x,y)$ satisfies the conditions
(i) and (ii) of the previous lemma. Since $\cG(\pi)\neq 0$, it
follows that $X(x+1,y)\neq 0$ or $X(x,y+1)\neq 0$. 

\end{proof}

Let $M,N$ and $P$ be $\bW$-modules of the class
$\cS$. 

\begin{lemma}\label{Gsl(2)}
We have $\dim\,\cG_{\fsl(2)}(M\times N,P)\leq 2$.
\end{lemma}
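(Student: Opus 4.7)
The plan is to show that any three elements of $\cG_{\fsl(2)}(M\times N,P)$ are linearly dependent, by invoking Lemma \ref{tech} after a pigeonhole on two weight-space entries. First I would reduce to the case of density modules: by Lemma \ref{W-germs}, each $X\in\cS$ has $\bW$-germ (hence $\fsl(2)$-germ) isomorphic to some $\cG(\Omega^\delta_u)$, so the $\fsl(2)$-analogue of the functoriality in Lemma \ref{lemma_germ-gen} gives an isomorphism $\cG_{\fsl(2)}(M\times N,P)\simeq \cG_{\fsl(2)}(\Omega^{\delta_1}_u\times\Omega^{\delta_2}_v,\Omega^\gamma_{u+v})$ for suitable $\delta_1,\delta_2,\gamma$ and cosets $u,v$. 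The target space is already zero if $\Supp P\neq\Supp M+\Supp N$ (for $L_0$-weight reasons), so we may assume this equality.

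In this reduced setting, given germs $\pi_1,\pi_2,\pi_3\in \cG_{\fsl(2)}(M\times N,P)$, I would represent each by an $L_0$-equivariant bilinear map $\pi_i(e^{\delta_1}_x,e^{\delta_2}_y)=X_i(x,y)\,e^\gamma_{x+y}$. Since each $\cG(L_{\pm 1}.\pi_i)=0$, the support of $L_{\pm 1}.\pi_i$ is bounded, so I can choose $(x_0,y_0)\in u\times v$ with $\Re x_0,\Re y_0$ large enough that conditions (i) and (ii) of Lemma \ref{tech} hold simultaneously for all three representatives. The three vectors
$$v_i:=\bigl(X_i(x_0+1,y_0),\,X_i(x_0,y_0+1)\bigr)\in\nc^2,\qquad i=1,2,3,$$
are then linearly dependent, yielding scalars $\lambda_1,\lambda_2,\lambda_3$, not all zero, such that $\pi:=\sum_i\lambda_i\pi_i$ also satisfies condition (iii) of Lemma \ref{tech}. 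That lemma then forces $\cG(\pi)=0$, giving the linear dependence of $\pi_1,\pi_2,\pi_3$ in $\cG_{\fsl(2)}(M\times N,P)$, and hence the bound $\dim\cG_{\fsl(2)}(M\times N,P)\leq 2$.

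The substantive input has already been done in Lemma \ref{tech}, which uses the $L_{\pm 1}$-equivariance of the germ to propagate the vanishing of $X$ from a single adjacent pair to the whole positive quadrant $C(x_0,y_0)$. What remains is simply a pigeonhole on two-dimensional vectors, so I do not expect any serious obstacle beyond the bookkeeping needed to verify that the finite list of conditions (i)--(ii) of Lemma \ref{tech} can be arranged simultaneously for the three representatives, which is a matter of pushing $(x_0,y_0)$ far enough into the support.
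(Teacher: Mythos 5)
Your proof is correct and follows essentially the same route as the paper: reduce to tensor density modules, pick $(x_0,y_0)$ far enough out that conditions (i)--(ii) of Lemma \ref{tech} hold for all three representatives, and use linear dependence of three vectors in $\nc^2$ to produce a combination killed by Lemma \ref{tech}. No substantive differences.
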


\begin{proof} 

Let  $\pi_1,\pi_2,\pi_3$ be any elements in $\B_{L_0}(M\times N,P)$ such that 
$\cG(\pi_i)\in  \cG_{\fsl(2)}(M\times N,P)$.

Since the space $\cG(M\times N,P)$ only depends on the
germs of $M$, $N$ and $P$, we can assume that
$M= \Omega^{\delta_1}_{u}$,
$N=\Omega^{\delta_2}_{v}$ and
$P=\Omega^{\gamma}_{w}$ for some
scalars $\delta_1,\delta_2$ and $\gamma$ and some $\nz$-cosets
$u$, $v$ and $w$.
Moreover, we can assume $w=
u+v$, since otherwise
it is obvious that $\cG_{\fsl(2)}(M\times N,P)=0$.

There is $(x,y)\in u\times v$ such that
$\Supp (L_{\pm1}.\pi_i)\cap C(x,y)=\emptyset$ for $i=1$ to $3$.
Adding some positive integers to $x$ and $y$ if necessary,
we can assume that 
$\Re x>\pm\Re\delta_1$, 
$\Re y>\pm\Re\delta_2$ and $\Re (x+y)>\pm\Re\gamma$.
There is a non-zero triple $(a,b,c)$ of scalars with:

\noindent\centerline{$[a\pi_1+b\pi_2+c\pi_3]
(e_{x+1}^{\delta_1},e_{y}^{\delta_2})=
[a\pi_1+b\pi_2+c\pi_3]
(e_{x}^{\delta_1},e_{y+1}^{\delta_2})=0$.}

\noindent It follows from Lemma \ref{tech} that 
$a\cG(\pi_1)+b\cG(\pi_2)+c\cG(\pi_3)=0$. Since any three
arbitrary elements $\cG(\pi_1),\cG(\pi_2)$ and $\cG(\pi_3)$ of
$\cG_{\fsl(2)}(M\times N,P)$ are linearly dependant, it follows that
$\dim\,\cG_{\fsl(2)}(M\times N,P)\leq 2$.

\end{proof}

\subsection{The recurrence relations}
Let $M,\,N$ and $P$ be three $\bW$-modules of the class
$\cS$. Let $\pi\in \B(M\times N,P)$. Set

$\tilde{\pi}(m,n)=L_{-2}L_2.\pi(m,n)-
\pi(L_{-2}L_2.m,n)-$

\hskip2cm$-\pi(m, L_{-2}L_2.n)-
\pi(L_{-2}.m, L_2.n)-
\pi(L_{2}.m, L_{-2}.n)$,

\noindent for all $(m, n)\in M\times N$. Note that  we have 

$\tilde{\pi}
=L_{-2}\circ(L_{2}.\pi)+(L_{-2}.\pi)\circ (L_{2}\times \id)+
(L_{-2}.\pi)\circ (\id \times L_{2})$. 

\noindent Similarly, for a germ  $\mu\in \cG(M\times N,P)$, set

\noindent\centerline{$\tilde{\mu} =L_{-2}\circ(L_{2}.\mu)
+(L_{-2}.\mu)\circ( L_{2}\times \id) + (L_{-2}.\mu)\circ (\id\times L_{2})$.}

\noindent Set 
$\tilde{\cG}_{{\fsl}(2)}(M\times N,P)=\{
\mu\in {\cG}_{{\fsl}(2)}(M\times N,P)\vert \,\tilde{\mu}=0\}$. 
It follows from the definitions that we have

\noindent
\centerline{
${\cG}_{{\bW}}(M\times N,P)\subset
\tilde{\cG}_{{\fsl}(2)}(M\times N,P)\subset
{\cG}_{{\fsl}(2)}(M\times N,P)$.
}

Let $\delta_1,\delta_2$ and $\gamma$ be three scalars. For $k=1,\,2$, set

\begin{align*}
&a_k(x,y)=(x+k\delta_1)(y-k\delta_2), \\
&b_k(x,y)=k^2(\delta_1+\delta_2-\gamma-\delta_1^2-\delta_2^2+\gamma^2)-2xy, \\
&c_k(x,y)=(x-k\delta_1)(y+k\delta_2). 
\end{align*}

\noindent Let $u$ and $v$ be two $\nz$-cosets and let $\pi:\Omega^{\delta_1}_{u}\times
\Omega^{\delta_2}_{v}\rightarrow
\Omega^{\gamma}_{u+v}$ be a $L_0$-equivariant bilinear map.
As before, define the function $X$ by the identity:

\noindent\centerline{$\pi(e_x^{\delta_1},e_y^{\delta_2})=X(x,y)
e_{x+y}^{\gamma}$.}

\begin{lemma}\label{Casimir}
Assume that the $\cG(\pi)$ belongs
to $\tilde{\cG}_{{\fsl}(2)}
(\Omega^{\delta_1}_{u}\times \Omega^{\delta_2}_{v},
\Omega^{\gamma}_{u+v})$. Then
there exists $(x_0,y_0)\in u\times v$ such that: 

\noindent\centerline{
$a_k(x,y)X(x+k,y-k)+b_k(x,y)X(x,y)+c_k(x,y)X(x-k,y+k)=0$,}

\noindent for $k=1,2$ and  all $(x,y)\in C(x_0,y_0)$.
\end {lemma}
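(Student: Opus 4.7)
The plan is to introduce, for $k\in\{1,2\}$, the auxiliary bilinear map
\[
\tilde\pi_k := L_{-k}\circ (L_k.\pi) + (L_{-k}.\pi)\circ (L_k\times\id) + (L_{-k}.\pi)\circ(\id\times L_k),
\]
so that $\tilde\pi_2$ coincides with the map $\tilde\pi$ introduced just before the lemma. A straightforward expansion, applying the identity $L_k.\pi(m,n) = (L_k.\pi)(m,n)+\pi(L_k m,n)+\pi(m,L_k n)$ twice, rewrites $\tilde\pi_k$ in the ``Casimir'' form
\[
\tilde\pi_k(m,n) = L_{-k}L_k.\pi(m,n) - \pi(L_{-k}L_k m,n) - \pi(m,L_{-k}L_k n) - \pi(L_{-k}m,L_k n) - \pi(L_k m,L_{-k}n).
\]

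The next step is to compute $\tilde\pi_k$ on basis elements. Since $L_k.e_x^{\delta} = (x+k\delta)e_{x+k}^{\delta}$, the operator $L_{-k}L_k$ acts on $e_x^\delta$ as multiplication by $(x+k\delta)(x+k-k\delta)$. Substituting and collecting yields
\[
\tilde\pi_k(e_x^{\delta_1}, e_y^{\delta_2}) = -\bigl[a_k(x,y)X(x+k,y-k)+b_k(x,y)X(x,y)+c_k(x,y)X(x-k,y+k)\bigr]e_{x+y}^{\gamma},
\]
with exactly the polynomials $a_k,b_k,c_k$ of the statement; the only identification that requires care is the coefficient of $X(x,y)$, namely
\[
(x+y+k\gamma)(x+y+k-k\gamma)-(x+k\delta_1)(x+k-k\delta_1)-(y+k\delta_2)(y+k-k\delta_2),
\]
whose quadratic parts telescope to $2xy$ and whose constant part is $k^2[\gamma(1-\gamma)-\delta_1(1-\delta_1)-\delta_2(1-\delta_2)]$, giving precisely $-b_k(x,y)$.

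The third step is to show that $\cG(\tilde\pi_k)=0$ for both $k=1$ and $k=2$. For $k=2$ this is exactly the condition $\tilde\mu = 0$ built into the definition of $\tilde{\cG}_{\fsl(2)}(M\times N,P)$. For $k=1$, the $\fsl(2)$-equivariance of $\cG(\pi)$ gives $\cG(L_{\pm 1}.\pi) = 0$; since each of the three summands of $\tilde\pi_1$ consists of a factor $L_{\pm 1}.\pi$ composed with operators that only shift supports by $\pm 1$, each summand has zero germ and therefore so does $\tilde\pi_1$.

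To conclude, choose $x^*\in\nr$ so large that $\tilde\pi_k$ vanishes on $M_{\geq x^*}\times N_{\geq x^*}$ for both $k=1,2$, and pick $(x_0,y_0)\in u\times v$ with $\Re x_0,\Re y_0\geq x^*$. Every $(x,y)\in C(x_0,y_0)$ then satisfies $\Re x,\Re y\geq x^*$, so the vanishing $\tilde\pi_k(e_x^{\delta_1},e_y^{\delta_2})=0$ reads off as the desired recurrence. No step is really an obstacle: the argument is a deterministic polynomial computation of the action of $L_{\pm k}$ on tensor density generators, the only bookkeeping being the choice of a single real number $x^*$ simultaneously witnessing the germ vanishing for $k=1$ (from $\fsl(2)$-equivariance) and for $k=2$ (from the hypothesis defining $\tilde{\cG}_{\fsl(2)}$).
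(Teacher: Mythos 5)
Your proof is correct and follows essentially the same route as the paper: you form the operators $\pi_k=L_{-k}\circ(L_k.\pi)+(L_{-k}.\pi)\circ(L_k\times\id)+(L_{-k}.\pi)\circ(\id\times L_k)$, note that $\cG(\pi_1)=0$ by $\fsl(2)$-equivariance and $\cG(\pi_2)=\cG(\tilde\pi)=0$ by the defining condition of $\tilde{\cG}_{\fsl(2)}$, and translate the vanishing of these germs on $C(x_0,y_0)$ into the recurrences. The only difference is that you carry out explicitly the (correct) coefficient computation identifying $a_k,b_k,c_k$, which the paper leaves implicit.
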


\begin{proof}
For $k=1,2$, set
$\pi_k=L_{-k}\circ (L_k. \pi)+
(L_{-k}.\pi)\circ (L_{k}\times id)+ (L_{-k}.\pi)\circ (id\times L_k)$.
Since the germ of $\pi$ is $\fsl(2)$-equivariant we have $\cG(\pi_1)=0$
and since $\pi_2=\tilde\pi$ we also have $\cG(\pi_2)=0$.
Therefore there exist 
$(x_0,y_0)\in u\times v$ such that $\Supp \pi_1$ and $\Supp \pi_2$ do
not intersect $C(x_0,y_0)$. This condition is equivalent to the recurrence
relations of the lemma.
\end{proof}

\subsection{The case $\dim \tilde{\cG}_{{\fsl}(2)}(M \times N,P)=2$}

\begin{lemma}\label{dim_tilde_cG} 
Let $M,N$ and $P$ be $\bW$-modules of
the class $\cS$. If

\noindent\centerline
{$\dim \tilde{\cG}_{{\fsl}(2)}(M \times N,P)=2$,}

\noindent then one of the following assertions holds:

(i) $\deg M=\deg N= \deg P =\{0,1\}$,

(ii) $\deg M=-1/2$, $\deg N =\{0,1\}$ and $\deg P\in\{-1/2,3/2\}$

(iii) $\deg M =\{0,1\}$, $\deg N=-1/2$, and $\deg P\in\{-1/2,3/2\}$.
\end{lemma}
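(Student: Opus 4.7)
The plan is to analyze the recurrence system given by Lemma \ref{Casimir}. By Lemma \ref{W-germs} and the functoriality of germs (Lemma \ref{lemma_germ-gen}), the space $\tilde{\cG}_{\fsl(2)}(M\times N, P)$ depends only on the $\bW$-germs of $M,N,P$, so I may replace them by tensor density modules $\Omega^{\delta_1}_u, \Omega^{\delta_2}_v, \Omega^{\gamma}_{u+v}$ for some $\delta_1 \in \deg M$, $\delta_2 \in \deg N$, $\gamma \in \deg P$. Fix a diagonal $x+y=s$ and introduce $g(x):=X(x, s-x)$. The $k=1$ recurrence from Lemma \ref{Casimir} is a three-term recurrence for $g$ on the diagonal, so its solution space has dimension two on each generic diagonal. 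Combined with the upper bound $\dim \cG_{\fsl(2)}(M\times N,P)\leq 2$ from Lemma \ref{Gsl(2)}, the hypothesis $\dim \tilde{\cG}_{\fsl(2)}(M\times N, P)=2$ forces every solution of the $k=1$ recurrence to also satisfy the $k=2$ recurrence on every generic diagonal.

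Next I would apply the $k=1$ recurrence at the points $(x+1, s-x-1)$ and $(x-1, s-x+1)$ to express $g(x\pm 2)$ as linear combinations of $g(x)$ and $g(x\pm 1)$, and then use the $k=1$ recurrence at $(x, s-x)$ to eliminate $g(x-1)$ in favor of $g(x)$ and $g(x+1)$. Substituting into the $k=2$ recurrence reduces it to a two-term linear relation $A_s(x)\,g(x)+B_s(x)\,g(x+1)=0$. Since $(g(x), g(x+1))$ parametrizes the entire two-dimensional initial data, we must have $A_s(x)\equiv B_s(x)\equiv 0$ as rational functions of $x$. After clearing denominators, $A_s$ and $B_s$ become polynomials in $x$ and $s$ whose coefficients are polynomial in $(\delta_1,\delta_2,\gamma)$; demanding they vanish for all generic $s$ yields a polynomial system in $(\delta_1,\delta_2,\gamma)$ alone.

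Finally, I would solve this polynomial system and verify that its irreducible components match the three cases of the lemma. Case (i) should arise from the component $\{\delta_1,\delta_2,\gamma\}\subset\{0,1\}$, where the vanishing is forced by the factors $x+\delta_1$, $x-\delta_1$, etc., of $a_1,c_1$; cases (ii) and (iii) should emerge as ``resonance'' components where the extra factor coming from $b_k$ cancels an off-diagonal pole, yielding $\delta_1=-1/2$ (respectively $\delta_2=-1/2$), $\gamma\in\{-1/2,3/2\}$, and the remaining parameter in $\{0,1\}$. The translation between the conditions ``$\delta_i\in\{0,1\}$'' and ``$\deg=\{0,1\}$'' in the lemma's statement is provided by Lemma \ref{sl(2)-germs}(i), which identifies the $\fsl(2)$-germs of $\Omega_u^{0}$ and $\Omega_u^{1}$.

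The main obstacle is the algebraic bookkeeping: the polynomial system is lengthy, and the resonance values $\delta=-1/2$, $\gamma=3/2,-1/2$ only surface after careful factorization (a similar phenomenon to the $6\times 6$ determinant $\det \M$ mentioned in the introduction, whose explicit factorization required computer algebra). A secondary subtlety is that the argument assumes $s$ is generic so that the leading coefficient $a_1$ of the $k=1$ recurrence is non-vanishing on the diagonal; one must verify separately that exceptional diagonals where $a_1$ or $c_1$ vanishes do not contribute additional dimensions, which follows because the set of exceptional $s$ is finite and a germ is determined by its values for $|s|\gg 0$.
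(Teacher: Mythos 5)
Your strategy is the same as the paper's: reduce to tensor density modules, take the $k=1$ and $k=2$ recurrences of Lemma \ref{Casimir} along a fixed diagonal $x+y=s$, use the $k=1$ relation at the shifted points to eliminate $g(x\pm2)$ from the $k=2$ relation, and then exploit the fact that (by Lemma \ref{tech} and the hypothesis $\dim\tilde{\cG}_{\fsl(2)}=2$) the pair of values at an adjacent pair of points sweeps out all of $\nc^2$, so that the resulting relation must vanish identically. This is exactly the paper's derivation of its relations (6) and (7) and the requirement that they be proportional; your ``$A_s\equiv B_s\equiv 0$'' is the statement that all $2\times 2$ minors of the pair (6)--(7) vanish, which is the same system. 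The setup, including the reduction via Lemmas \ref{W-germs}, \ref{lemma_germ-gen}, \ref{Gsl(2)} and the caveat about non-generic diagonals, is sound.

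The gap is that you never solve the system, and that is where essentially all of the content of this lemma lives. The paper's resolution proceeds in three genuinely nontrivial steps: (a) the quadratic factor $b_1=\tau-2xy$ with $\tau=\delta_1+\delta_2-\gamma-\delta_1^2-\delta_2^2+\gamma^2$ is irreducible, and comparing irreducible factors of the two sides of the minor identity forces $\tau=0$; (b) after substituting $\tau=0$, matching the linear factor $x+1$ against $(x+\delta_1)(x+1+\delta_1)(x-1)(x-2\delta_1)$ forces $\delta_1\in\{-1/2,0,1\}$ (and likewise for $\delta_2$); (c) the case $\delta_1=\delta_2=-1/2$ must be excluded by an explicit computation of a $2\times2$ minor, $ad-bc=\tfrac{9}{32}(1+2y)(2x-1)(2xy-1)\neq0$. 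Step (c) in particular is not predicted by your ``resonance'' heuristic: the triples $(-1/2,-1/2,\gamma)$ with $\gamma^2-\gamma=3/2$ satisfy the single determinantal identity coming from one minor, and are only eliminated because the \emph{full} proportionality fails. Your plan, if executed literally (all coefficients of the two-term relation vanish), would catch this, but as written the proposal only asserts that the components ``should'' match the three listed cases without deriving them, so the statement of the lemma is not actually established.
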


\begin{proof} 
Assume that $\dim \tilde{\cG}_{{\fsl}(2)}(M \times N,P)=2$.
We can assume that $M=\Omega^{\delta_1}_{u}, \,
N=\Omega^{\delta_2}_{v}$ and
$P=\Omega^{\gamma}_{u+v}$ for some 
$\delta_1,\,\delta_2$ and $\gamma$ in $\nc$ and
some $u,\,v\in {\nc/\nz}$.
Choose $\pi_1,\pi_2\in\B_{L_0}(\Omega^{\delta_1}_{u} 
\times\Omega^{\delta_2}_{v},
\Omega^{\gamma}_{u+v})$  whose germs form a basis of 
$\tilde{\cG}_{{\fsl}(2)}
(\Omega^{\delta_1}_{u} 
\times\Omega^{\delta_2}_{v},
\Omega^{\gamma}_{u+v})$. For $i=1,\,2$ define the functions
$X_i(x,y)$ by the identity

\noindent\centerline{$\pi_i(e_x^{\delta_1},e_y^{\delta_2})=X_i(x,y)
e_{x+y}^{\gamma}$.}

By  Lemma \ref{Casimir}, there exists $(x_0,y_0)\in
u\times v$ such that

\noindent (1)\hskip6mm
$a_1(x,y)X_i(x+1,y-1)+b_1(x,y)X_i(x,y)+c_1(x,y)X_i(x-1,y+1)=0$,

\noindent (2)\hskip6mm
$a_2(x,y)X_i(x+2,y-2)+b_2(x,y)X_i(x,y)+c_2(x,y)X_i(x-2,y+2)=0$,

\noindent for $i=1,2$ and  all $(x,y)\in C(x_0,y_0)$. 
Moreover by Lemma \ref{tech}, we can assume that the vectors $(X_1(x+1,y),X_1(x,y+1))$ and
$(X_2(x+1,y),X_2(x,y+1))$ are linearly independant, for all $(x,y)\in C(x_0,y_0)$.

{\it First step:} We claim that 

\noindent  $a_2(x,y)b_1(x+1,y-1)c_1(x-1,y+1)c_1(x,y)=$
 
\hskip3cm $a_1(x,y)a_1(x+1,y+1)b_1(x-1,y+1)c_2(x,y)$,

\noindent for all $(x,y)\in\nc^2$, where the functions $a_i,\,b_i$ and $c_i$ are
defined in the previous section.

From now on, we assume that $(x,y)$ belongs to
$C(x_0+1,y_0+1)$. To simplify
the expressions in the proof, we  set 
$A_{il}=a_i(x+l,y-l)$, $B_{il}=b_i(x+l,y-l)$ and
$C_{il}=c_i(x+l,y-l)$, for any  $l\in \{-1,0,1\}$). 
Thus Identity (2) can be written as:

\noindent \centerline{
$A_{2,0}X_i(x+2,y-2)+*X_i(x,y)+C_{2,0}X_i(x-2,y+2)=0$,}

\noindent Here and in what follows, $*$ stands for a certain constant
whose explicit value is not important at this stage. Multiplying by
$A_{1,1}C_{1,-1}$, we obtain

\noindent (3){
$A_{2,0}C_{1,-1}[A_{1,1}X_i(x+2,y-2)]+*X_i(x,y)+$}

\hskip5cm $A_{1,1}C_{2,0}[C_{1,-1}X_i(x-2,y+2)]=0$,

\noindent
Note that $(x+1,y-1)$ and $(x-1,y+1)$ belong to $C(x_0,y_0)$. Using Relation (1) we
have

\noindent (4)\centerline{
$A_{1,1}X_i(x+2,y-2)+B_{1,1}X_i(x+1,y-1)+*X_i(x,y)=0$}

\noindent (5)\centerline{
$*X_i(x,y)+B_{1,-1}X_i(x-1,y+1)+C_{1,-1}X_i(x-2,y+2)=0$}

\noindent
With Relations (4) and (5) we can eliminate the terms 
$[A_{1,1}X_i(x+2,y-1)]$ and $[C_{1,-1}X_i(x-2,y+2)]$ in Relation (3). We obtain

\noindent (6) $A_{2,0}B_{1,1}C_{1,-1}X_i(x+1,y-1)+* X_i(x,y)+$ 

\hskip5cm $A_{1,1}B_{1,-1}C_{2,0}X_i(x-1,y+1)=0$.

Moreover Relation (1) can be written as

\noindent (7)\centerline{
$A_{1,0}X_i(x+1,y-1)+*X_i(x,y)+C_{1,0}X_i(x-1,y+1)=0$,}

Thus Relations (6) and (7) provide two linear equations connecting
$X_i(x+1,y-1)$, $X_i(x,y)$ and $X_i(x-1,y+1)$. Since 
$(x,y), (x+1,y-1)$ is an adjacent pair, it follows 
that the two triples $(X_1(x+1,y-1),X_1(x,y),X_1(x-1,y+1))$
and $(X_2(x+1,y-1),X_2(x,y),X_2(x-1,y+1))$ are linearly
independent. So the linear relations
(6) and (7) are proportional, which implies that

\noindent\centerline{$A_{2,0}B_{1,1}C_{1,-1}C_{1,0}=A_{1,0}A_{1,1}B_{1,-1}C_{2,0}$,}

\noindent or equivalently

(8) $a_2(x,y)b_1(x+1,y-1)c_1(x-1,y+1)c_1(x,y)=$
 
\hskip3cm $a_1(x,y)a_1(x+1,y+1)b_1(x-1,y+1)c_2(x,y).$

\noindent This identity holds for all $(x,y)\in C(x_0+1,y_0+1)$.
Since $C(x_0+1,y_0+1)$ is Zariski dense in $\nc^2$, Identity
(8) holds for any $(x,y)\in\nc^2$.

{\it Second step:} We claim that

\centerline{$\delta_1+\delta_2-\gamma-\delta_1^2-\delta_2^2+\gamma^2=0$.}

Assume otherwise and set
$\tau=\delta_1+\delta_2-\gamma-\delta_1^2-\delta_2^2+\gamma^2$.   
We have
$b_1(x,y)=\tau-2xy$ , therefore the polynomial $b_1$ is irreducible.
Observe that all irreducible factors of the left side of (8) are
degree 1 polynomials, except $b_1(x+1,y-1)$ and all 
irreducible factors of the right side  side of (8) are
degree 1 polynomials, except $b_1(x-1,y+1)$. Hence the irreducible factors
of both sides do not coincide, which proves the claim.

{\it Third step:} We claim that $\delta_1$ and $\delta_2$ belong
to $\{-1/2,0,1\}$. Using that $\delta_1+\delta_2-\gamma-\delta_1^2-\delta_2^2+\gamma^2=0$,
Identity (8) looks like

$(x+2\delta_1) (x+1)(x-1-\delta_1)(x-\delta_1) f(y)$

\hskip3cm $=(x+\delta_1)(x+1+\delta_1)(x-1)(x-2\delta_1) g(y)$

\noindent where $f(y)$ and $g(y)$ are some functions of $y$. Since 
$x+1$ is a factor of the left side of the identity, it follows that
$\delta_1=1,\,0$ or $-1/2$. The proof that 
$\delta_2=1,\,0$ or $-1/2$ is identical.

{\it Fourth step:} We claim that the case 
$\delta_1=\delta_2=-1/2$ is impossible. The Equations (6) and (7) can be
written as

\noindent (6) \centerline{$aX_i(x+1,y-1)+b X_i(x,y)+ *X_i(x-1,y+1)=0$,}

\noindent (7)\centerline{$cX_i(x+1,y-1)+dX_i(x,y)+*X_i(x-1,y+1)=0$,}

\noindent where $a,\,b,\,c$ and $d$ are explicit functions of $x$ and $y$
(as before, * denotes some functions which are irrelevant for the present computation). 
Using  that $\delta_1+\delta_2-\gamma-\delta_1^2-\delta_2^2+\gamma^2=0$ and  
a brute force computation, we obtain

\noindent \centerline{$ad-bc=9/32(1+2y)(2x-1)(2xy-1)$.} 

\noindent So the Equations (6) and (7) are not proportional, which contradicts that
$(X_1(x+1,y-1), X_1(x,y))$ and $(X_2(x+1,y-1), X_2(x,y))$ are independent.

{\it Final step:} If 
$\delta_1$ and $\delta_2$ belongs to $\{0,1\}$, then
$\gamma^2-\gamma=0$ i.e $\gamma=0$ or $1$ and the
triple $(\delta_1,\delta_2, \gamma)$ satisfies Assertion (i).
If $\delta_1=-1/2$, then $\delta_2=0$ or $1$ and 
$\gamma^2-\gamma=3/4$, i.e. $\gamma=-1/2$ or $3/2$ and the
triple $(\delta_1,\delta_2, \gamma)$ satisfies Assertion (ii).
Similarly if $\delta_2=-1/2$, the
triple $(\delta_1,\delta_2, \gamma)$ satisfies Assertion (iii).
\end{proof}

\subsection{The case $\dim \cG_{\bW}(M \times N,P)=2$}

Let $M,N,P\in{\cS}$ with $\Supp P= \Supp M+\Supp N$.

\begin{lemma}\label{germ_dim=2}
We have $\dim \cG_{\bW}(M \times N,P)=2$
iff $\deg M=\deg N=\deg P=\{0,1\}$.
\end{lemma}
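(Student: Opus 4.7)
The plan is to work with the chain $\cG_\bW(M\times N,P)\subset \tilde{\cG}_{\fsl(2)}(M\times N,P)\subset \cG_{\fsl(2)}(M\times N,P)$, using Lemma \ref{Gsl(2)} for the dimension bound of $2$, and then using Lemma \ref{dim_tilde_cG} to narrow down the ``dimension two'' possibilities.

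For the direction $(\Leftarrow)$, assume $\deg M=\deg N=\deg P=\{0,1\}$. Lemma \ref{lower}(ii) supplies two linearly independent germs $\pi_1=P^{0,1}_{u,v}\circ(\id\times d)$ and $\pi_2=P^{1,0}_{u,v}\circ(d\times \id)$ in $\cG_\bW(M\times N,P)$, so $\dim\geq 2$, matching the upper bound from Lemma \ref{Gsl(2)}.

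For the direction $(\Rightarrow)$, assume $\dim\,\cG_\bW(M\times N,P)=2$. Then also $\dim\,\tilde{\cG}_{\fsl(2)}(M\times N,P)=2$, so by Lemma \ref{dim_tilde_cG} the triple $(\delta_1,\delta_2,\gamma)$ falls into case (i), (ii), or (iii). Case (i) is already the desired conclusion; cases (ii) and (iii) are exchanged by the $\fS_2$-symmetry that swaps $M$ and $N$ (which manifestly preserves germs), so I only need to exclude case (ii): $\delta_1=-1/2$, $\delta_2\in\{0,1\}$, $\gamma\in\{-1/2,3/2\}$. To do so, I exploit that $\bW$-equivariance is strictly stronger than the $L_{\pm 2}$-Casimir condition defining $\tilde{\cG}_{\fsl(2)}$: for any $\pi\in\cG_\bW$, both $L_{\pm 3}.\pi$ have germ zero, so repeating the argument of Lemma \ref{Casimir} with the pair $(L_{-3},L_3)$ produces a third recurrence
\begin{equation*}
a_3(x,y)X(x+3,y-3)+b_3(x,y)X(x,y)+c_3(x,y)X(x-3,y+3)=0,
\end{equation*}
valid on some cone $C(x_0,y_0)$, with $a_3=(x+3\delta_1)(y-3\delta_2)$, $c_3=(x-3\delta_1)(y+3\delta_2)$, $b_3=9\tau-2xy$, where $\tau:=\delta_1+\delta_2-\gamma-\delta_1^2-\delta_2^2+\gamma^2$. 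Eliminating $X(x\pm 3,y\mp 3)$ by two successive applications of the $k=1$ recurrence (at $(x+1,y-1),(x+2,y-2)$ on one side and $(x-1,y+1),(x-2,y+2)$ on the other) yields a second linear relation on the adjacent triple $\{X(x+1,y-1),X(x,y),X(x-1,y+1)\}$. In the two-dimensional regime, Lemma \ref{tech} forces this second relation to be proportional to the $k=1$ recurrence itself, giving a polynomial identity in $x,y$ analogous to Identity (8) in Lemma \ref{dim_tilde_cG}. After substituting $\tau=0$ (still forced by the $(k=1,k=2)$ subargument of that lemma), the $(x+1)$ factor on the left coming from $b_1(x+1,y-1)=-2(x+1)(y-1)$ must divide the right side, whose $x$-factors now include $(x-3\delta_1)$ in place of $(x-2\delta_1)$. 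Matching gives $\delta_1\in\{0,1,-1/3\}$; intersecting with the $\delta_1\in\{0,1,-1/2\}$ from Lemma \ref{dim_tilde_cG} forces $\delta_1\in\{0,1\}$, contradicting case (ii).

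The main obstacle is purely computational: the double iteration of the $k=1$ recurrence needed to reduce $X(x\pm 3,y\mp 3)$ to the adjacent triple produces many cross terms in $a_1,b_1,c_1$ evaluated at four different shifts, and one must check that once $\tau=0$ is imposed the resulting polynomial identity factorizes cleanly enough for the $(x+1)$-divisibility argument to go through. The structure mirrors the $(k=1,k=2)$ case of Lemma \ref{dim_tilde_cG} but is substantially bulkier, and in practice one would carry it out with computer algebra in the same spirit as the Appendix A calculations.
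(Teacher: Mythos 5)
Your overall architecture agrees with the paper's: the upper bound $\dim\cG_{\bW}\le 2$ from Lemma \ref{Gsl(2)}, the lower bound from Lemma \ref{lower}(ii), and the reduction of the converse to excluding cases (ii)--(iii) of Lemma \ref{dim_tilde_cG} via the $\fS_2$-symmetry. The problem lies in your exclusion of case (ii), and it is not merely that the computation is deferred to a computer: the computation, if carried out, would not produce the constraint you claim. Your argument uses only the diagonal recurrences
\begin{equation*}
a_k(x,y)X(x+k,y-k)+b_k(x,y)X(x,y)+c_k(x,y)X(x-k,y+k)=0,
\end{equation*}
and in the critical case these admit a two-dimensional space of formal solutions for \emph{every} $k$ simultaneously. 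Take $(\delta_1,\delta_2,\gamma)=(-1/2,0,-1/2)$ or $(-1/2,0,3/2)$, so that $\tau:=\delta_1+\delta_2-\gamma-\delta_1^2-\delta_2^2+\gamma^2=0$; then $a_k=(x-k/2)y$, $b_k=-2xy$, $c_k=(x+k/2)y$, and both $X\equiv 1$ and $X(x,y)=x^2$ satisfy the recurrence for all $k$, since $(x-k/2)-2x+(x+k/2)=0$ and $(x-k/2)(x+k)^2-2x^3+(x+k/2)(x-k)^2=0$ identically (for $\delta_2=1$ use $X=1/y$ and $X=x^2/y$ instead). These two solutions have linearly independent adjacent triples for generic $x$, so \emph{any} linear relation on an adjacent triple obtained by combining the recurrences --- in particular your $k=3$ elimination --- is automatically proportional to the $k=1$ relation. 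Hence the polynomial identity you would obtain is \emph{satisfied} at $\delta_1=-1/2$ and cannot force $\delta_1\in\{0,1,-1/3\}$; concretely, the two-step elimination of $X(x\pm3,y\mp3)$ introduces extra factors on both sides that destroy the clean $(x+1)$-divisibility you are extrapolating from Identity (8). The recurrences only encode the operators $L_{-k}L_k$ and $L_{\pm k}\otimes L_{\mp k}$, and that information is genuinely insufficient to separate dimension one from dimension two here (the paper itself proves that $\cG_{\bW}$ is nonzero, of dimension exactly one, in this case).

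The paper closes this case by a different mechanism that uses the full $\bW$-action transversally to the recurrences. By Lemma \ref{sl(2)-germs}(i) there is an $\fsl(2)$-equivariant germ isomorphism $\phi$ between $\Omega^{-1/2}_{u+v}$ and $\Omega^{3/2}_{u+v}$, which by Lemma \ref{sl(2)-germs}(ii) is not $\bW_{\geq-1}$-equivariant since $\{-1/2,3/2\}\neq\{0,1\}$. Lemma \ref{zero_intersection} then shows that $\phi_*\,\cG_{\bW}(\Omega^{0}_u\times\Omega^{-1/2}_v,\Omega^{-1/2}_{u+v})$ and $\cG_{\bW}(\Omega^{0}_u\times\Omega^{-1/2}_v,\Omega^{3/2}_{u+v})$ intersect trivially inside $\cG_{\fsl(2)}(\Omega^{0}_u\times\Omega^{-1/2}_v,\Omega^{3/2}_{u+v})$, which has dimension at most $2$ by Lemma \ref{Gsl(2)}; since both spaces are nonzero by Lemma \ref{lower}, each has dimension exactly one, so neither target in case (ii)/(iii) can carry a two-dimensional $\cG_{\bW}$. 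You should replace your $k=3$ computation by this argument (or another one that exploits more than the diagonal recurrences).
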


\begin{proof}
Set $u=\Supp M$ and $v=\Supp N$. By Lemma \ref{Gsl(2)},
we have $\dim \cG_{\fsl(2)}(M \times N,P)\leq 2$
and therefore  $\dim \cG_{\bW}(M \times N,P)\leq 2$.

\noindent {\it First step:} Assume that 
$\deg M=\deg N=\deg P=\{0,1\}$. By Lemma \ref{lower} we have 
$\dim \cG_{\bW}(M \times N,P)\geq 2$. Thus
$\dim \cG_{\bW}(M \times N,P)=2$.

\noindent{\it Second step:} Set
$d^-=\dim\cG_{\bW}(\Omega^0_u\times\Omega^{-1/2}_v,\Omega^{-1/2}_{u+v})$
and $d^+=\dim\cG_{\bW}(\Omega^0_u\times\Omega^{-1/2}_v,\Omega^{3/2}_{u+v})$.
We claim that $d^+=d^-=1$.

By Lemma \ref{sl(2)-germs}, there is a $\fsl(2)$-equivariant isomorphism
$\phi:\cG(\Omega^{-1/2})\rightarrow \cG(\Omega^{3/2})$. 
By Lemma \ref{zero_intersection}, 
$\phi_*\,\cG_{\bW}(\Omega^0_u\times\Omega^{-1/2}_v,\Omega^{-1/2}_{u+v})$
and $\cG_{\bW}(\Omega^0_u\times\Omega^{-1/2}_v,\Omega^{3/2}_{u+v})$ are two subspaces
of $\cG_{\fsl(2)}(\Omega^0_u\times\Omega^{-1/2}_v,\Omega^{3/2}_{u+v})$ with trivial
intersection. Thus we have $d^++d^-\leq  2$.
However by Lemma \ref{lower}, we have $d^+\geq 1$ and $d^-\geq 1$.
It follows that $d^+=d^-=1$.

\noindent{\it Third step:} Conversely, assume that
$\dim \cG_{\bW}(M \times N,P)=2$. It follows that 
$\dim \tilde{\cG}_{\fsl(2)}(M \times N,P)=2$. 

By the previous step, the case (ii) of the  assertion of Lemma 
\ref{dim_tilde_cG} cannot occur. Using the $\fS_2$-symmetry, the case (iii) is 
excluded as well. It follows that $\deg M=\deg N=\deg P=\{0,1\}$.

\end{proof}

\section{Determination of $\cG_{\bW}(M\times N,P)$}
\label{sect_matrix-M}

Let $M,N$ and $P\in\cS$. In this section, we will compute the space
$\cG_{\bW}(M\times N,P)$.

We will always assume that $\Supp P=\Supp M+\Supp N$, otherwise it is obvious
that $\cG_{\bW}(M\times N,P)=0$. In the previous section,
it has been shown that $\dim \cG_{\bW}(M\times N,P)\leq 2$,
and the case $\dim \cG_{\bW}(M\times N,P)=2$ has been determined.
So it remains to decide when $\cG_{\bW}(M\times N,P)$ is zero or not.

The final result is very simple to state, because
 $\dim \cG_{\bW}(M\times N,P)$ only depends on
$\deg M,\deg N$ and $\deg P$. 

\subsection {Upper bound for $\dim \cG_{\bW}(M\times N,P)$}

\begin{lemma}
Let $M,N,P$ and $Q\in \cS$ and let $\phi\in\cG_{\fsl(2)}(P,Q)$. We have

\noindent\centerline{$\phi_*\tilde{\cG}_{\fsl(2)}(M\times N,P)
\subset \tilde{\cG}_{\fsl(2)}(M\times N,Q)$}
\end{lemma}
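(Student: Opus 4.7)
The plan is to reduce the statement to a direct check on the recurrence relations in Lemma \ref{Casimir}. Since $\tilde{\cG}_{\fsl(2)}(M\times N,-)$ depends only on the germ of its last argument, by Lemma \ref{W-germs} we may assume without loss of generality that $M=\Omega^{\delta_1}_u$, $N=\Omega^{\delta_2}_v$, $P=\Omega^\gamma_{u+v}$ and $Q=\Omega^{\gamma'}_{u+v}$ are tensor density modules; the case $\Supp P\neq\Supp M+\Supp N$ is trivial.

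Next I would pin down the shape of $\phi$. A $L_0$-equivariant representative has the form $\phi(e^\gamma_z)=g(z)e^{\gamma'}_z$ for some function $g$ on $u+v$. Imposing $\fsl(2)$-equivariance of the germ of $\phi$ gives, for $\Re z\gg 0$, the relations $g(z+1)(z+\gamma)=g(z)(z+\gamma')$ and $g(z-1)(z-\gamma)=g(z)(z-\gamma')$. Combining them exactly as in the proof of Lemma \ref{sl(2)-germs}(ii) yields $(\gamma-\gamma')(1-\gamma-\gamma')=0$, so either $\gamma'=\gamma$ or $\gamma'=1-\gamma$. In either case $\gamma^2-\gamma=\gamma'^2-\gamma'$, and the key consequence is that the coefficient polynomials $a_k(x,y), b_k(x,y), c_k(x,y)$ of Lemma \ref{Casimir} take the same values whether computed with $\gamma$ or with $\gamma'$: the polynomials $a_k$ and $c_k$ involve only $\delta_1,\delta_2$, and $b_k$ depends on $\gamma$ only through the $\gamma\leftrightarrow 1-\gamma$-symmetric combination $\gamma^2-\gamma$.

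For the conclusion, write $\mu(e^{\delta_1}_x,e^{\delta_2}_y)=X(x,y)e^{\gamma}_{x+y}$ for $\mu\in\tilde{\cG}_{\fsl(2)}(M\times N,P)$, so that $\phi\circ\mu(e^{\delta_1}_x,e^{\delta_2}_y)=Y(x,y)e^{\gamma'}_{x+y}$ with $Y(x,y):=g(x+y)X(x,y)$. By Lemma \ref{Casimir} there exists $(x_0,y_0)\in u\times v$ on which $X$ satisfies the $k=1,2$ recurrences. Since $g(x+y)$ is invariant under the shifts $(x,y)\mapsto(x\pm k,y\mp k)$, the identity
\[ a_kY(x+k,y-k)+b_kY(x,y)+c_kY(x-k,y+k)=g(x+y)\bigl[a_kX(x+k,y-k)+b_kX(x,y)+c_kX(x-k,y+k)\bigr]=0 \]
holds on $C(x_0,y_0)$ for $k=1,2$. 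The proof of Lemma \ref{Casimir} actually shows that these recurrences are equivalent to $\fsl(2)$-equivariance of the germ and to $\widetilde{\phi\circ\mu}=0$, so we conclude that $\phi\circ\mu\in\tilde{\cG}_{\fsl(2)}(M\times N,Q)$. The only mild obstacle is recognising the symmetry of $b_k$ under $\gamma\leftrightarrow 1-\gamma$; without it the argument would split awkwardly between the two admissible values of $\gamma'$ and might not close uniformly.
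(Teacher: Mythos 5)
Your proof is correct, but it takes a genuinely different (and more computational) route than the paper. The paper's argument is a two-line Casimir computation: $\Omega_2=L_0^2+2L_0-L_{-2}L_2$ acts on each $X\in\cS$ by $4c(X)$, where $c(X)$ is the eigenvalue of the $\fsl(2)$-Casimir $\Omega_1$; a non-zero $\fsl(2)$-equivariant germ $\phi:P\rightarrow Q$ forces $c(P)=c(Q)$, hence $L_{-2}L_2$ commutes with $\cG(\phi)$, hence $\widetilde{\phi\circ\mu}=\phi\circ\tilde{\mu}=0$ directly, with no reduction to tensor density modules and no case split. Your version unpacks exactly this: the constraint $\gamma'\in\{\gamma,1-\gamma\}$ you derive is the statement $c(P)=c(Q)$ in coordinates, and the invariance of $b_k$ under $\gamma\leftrightarrow 1-\gamma$ together with the invariance of $g(x+y)$ under $(x,y)\mapsto(x\pm k,y\mp k)$ is the coordinate form of $[L_{-2}L_2,\phi]=0$. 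Both are valid; the paper's buys brevity and independence of the explicit bases, yours makes the mechanism visible inside the recurrence relations of Lemma \ref{Casimir}. One small imprecision: the two recurrences are \emph{implied by}, not equivalent to, the conjunction of $\fsl(2)$-equivariance and $\tilde{\mu}=0$ (the $k=1$ relation only records $\cG(L_{-1}\circ(L_1.\pi)+\cdots)=0$, which is weaker than equivariance). This is harmless here, since the $k=2$ recurrence alone is equivalent to $\widetilde{\phi\circ\mu}=0$, and the $\fsl(2)$-equivariance of $\cG(\phi\circ\mu)$ is automatic from the composability of equivariant germs; but the sentence as written should not be taken literally.
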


\begin{proof}
{\it Step 1:} Set $\Omega_1=L_0^2+L_0-L_{-1}L_1$ and
$\Omega_2=L_0^2+2L_0-L_{-2}L_2$. Indeed $\Omega_1$
is the Casimir element of $U(\fsl(2))$ and it acts as some scalar $c(X)$ on any 
$\bW$-module $X \in \cS$. It turns out
that $\Omega_2$ acts on $X$ as $4c(X)$.

{\it Step 2:} In order to prove the lemma, we can assume that
$\phi\neq 0$. Therefore $c(P)=c(Q)$ and $\Omega_2$ acts by the same scalar
on $P$ and on $Q$. Thus we get

\noindent\centerline{$\Omega_2\circ\phi=\phi\circ\Omega_2$.}

\noindent Since $\phi$ commutes with $L_0$, we get

\noindent\centerline{$(L_{-2}L_2)\circ\phi=\phi\circ(L_{-2}L_2)$.}

\noindent from which the lemma follows.
\end{proof}

\begin{lemma}\label{inequality}
 Let $\delta_1,\delta_2$ and $\gamma\in\nc$ and let
$u$ and $v$ be $\nz$-cosets. Assume that none of the following conditions is
satisfied

(i) $\gamma= 0,1/2$ or $1$, 

(ii) $\delta_1=-1/2$, $\delta_2\in\{0,1\}$ and $\gamma\in\{-1/2,3/2\}$,
 
(iii) $\delta_1\in\{0,1\}$,  $\delta_2=-1/2$ and $\gamma\in\{-1/2,3/2\}$.

Then we have:

\noindent\centerline
{$\dim {\cG}_{\bW}(\Omega^{\delta_1}_u\times\Omega^{\delta_2}_v,
\Omega^{\gamma}_{u+v})+
\dim {\cG}_{\bW}(\Omega^{\delta_1}_u\times\Omega^{\delta_2}_v,
\Omega^{1-\gamma}_{u+v})\leq 1$.}
\end{lemma}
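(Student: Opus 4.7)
The plan is to exploit the ``degree-flip'' isomorphism of Lemma \ref{sl(2)-germs}(i): there is an $L_0$-equivariant map $\psi:\Omega^{\gamma}_{u+v}\rightarrow \Omega^{1-\gamma}_{u+v}$ whose germ $\cG(\psi)$ is a $\fsl(2)$-equivariant isomorphism. Under hypothesis (i), namely $\gamma\notin\{0,1/2,1\}$, we have $\{\gamma,1-\gamma\}\not\subset\{0,1\}$ and $\gamma\neq 1-\gamma$, so Lemma \ref{sl(2)-germs}(ii) (applied to the pair $(\gamma,1-\gamma)$) prevents $\cG(\psi)$ from being $\bW_{\geq -1}$-equivariant. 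This is the key input that makes $\cG(\psi)$ useful for separating germs of bilinear maps landing in $\Omega^{\gamma}_{u+v}$ from those landing in $\Omega^{1-\gamma}_{u+v}$.

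Set $M=\Omega^{\delta_1}_u$, $N=\Omega^{\delta_2}_v$, $P=\Omega^{\gamma}_{u+v}$ and $Q=\Omega^{1-\gamma}_{u+v}$. Composition with $\cG(\psi)$ yields a map
\[
\cG(\psi)_{*}:\cG_{\fsl(2)}(M\times N,P)\rightarrow \cG_{\fsl(2)}(M\times N,Q),
\]
which is injective since $\cG(\psi)$ is an $\fsl(2)$-isomorphism. By Lemma \ref{zero_intersection}, the two subspaces $\cG(\psi)_{*}\,\cG_{\bW}(M\times N,P)$ and $\cG_{\bW}(M\times N,Q)$ of $\cG_{\fsl(2)}(M\times N,Q)$ have zero intersection. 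Moreover, the previous lemma of the section shows that $\cG(\psi)_{*}$ sends $\tilde{\cG}_{\fsl(2)}(M\times N,P)$ into $\tilde{\cG}_{\fsl(2)}(M\times N,Q)$, and the inclusion $\cG_{\bW}\subset\tilde{\cG}_{\fsl(2)}$ recorded at the start of the section places both subspaces inside $\tilde{\cG}_{\fsl(2)}(M\times N,Q)$. Therefore
\[
\dim\cG_{\bW}(M\times N,P)+\dim\cG_{\bW}(M\times N,Q)\leq \dim\tilde{\cG}_{\fsl(2)}(M\times N,Q).
\]

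It remains to bound the right-hand side by $1$. By Lemma \ref{dim_tilde_cG}, $\dim\tilde{\cG}_{\fsl(2)}(M\times N,Q)\leq 2$, with equality forcing $(\deg M,\deg N,\deg Q)$ to lie in one of three explicit configurations. Substituting $\deg Q=1-\gamma$ (or $\{0,1\}$ when $1-\gamma\in\{0,1\}$), these three configurations are respectively:  $\delta_1,\delta_2\in\{0,1\}$ with $\gamma\in\{0,1\}$; $\delta_1=-1/2$, $\delta_2\in\{0,1\}$, $\gamma\in\{-1/2,3/2\}$; and the symmetric version. These are precisely the excluded cases (i), (ii), (iii) of the statement. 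Hence $\dim\tilde{\cG}_{\fsl(2)}(M\times N,Q)\leq 1$ under our hypotheses, which yields the lemma.

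I do not expect a real obstacle: the argument is the straightforward composition of Lemmas \ref{sl(2)-germs}, \ref{zero_intersection}, \ref{dim_tilde_cG} and the preceding lemma of this section. The only mildly delicate point is checking that the exceptions listed in Lemma \ref{dim_tilde_cG}, which are phrased in terms of $\deg P$, match the exceptions of the present lemma when $P$ is replaced by $Q=\Omega^{1-\gamma}_{u+v}$; this is a direct verification using the convention $\deg\Omega^{\delta}_s=\delta$ for $\delta\notin\{0,1\}$ and $\{0,1\}$ otherwise.
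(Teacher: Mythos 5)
Your proposal is correct and follows essentially the same route as the paper: both use the $\fsl(2)$-equivariant degree-flip germ isomorphism between $\Omega^{\gamma}_{u+v}$ and $\Omega^{1-\gamma}_{u+v}$, invoke Lemma \ref{zero_intersection} (justified by condition (i) via Lemma \ref{sl(2)-germs}(ii)) to get zero intersection inside $\tilde{\cG}_{\fsl(2)}$, and then bound that space by $1$ using Lemma \ref{dim_tilde_cG} and the excluded cases. The only cosmetic difference is that you push $\cG_{\bW}(M\times N,\Omega^{\gamma}_{u+v})$ into $\tilde{\cG}_{\fsl(2)}(M\times N,\Omega^{1-\gamma}_{u+v})$ whereas the paper pushes the other space into $\tilde{\cG}_{\fsl(2)}(M\times N,\Omega^{\gamma}_{u+v})$; your check that the exceptional configurations are stable under $\gamma\mapsto 1-\gamma$ handles this correctly.
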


\begin{proof} By Lemma \ref{sl(2)-germs}, there exists an isomorphism
$\phi:\cG_{\fsl(2)}(\Omega^{1-\gamma}_{u+v})\rightarrow
\cG_{\fsl(2)}(\Omega^{\gamma}_{u+v})$.
Obviously we have ${\cG}_{\bW}(\Omega^{\delta_1}_u\times\Omega^{\delta_2}_v,
\Omega^{\gamma}_{u+v})\subset 
\tilde{\cG}_{\fsl(2)}(\Omega^{\delta_1}_u\times\Omega^{\delta_2}_v,
\Omega^{\gamma}_{u+v})$ and by the
previous lemma we also have 
$\phi_*{\cG}_{\bW}(\Omega^{\delta_1}_u\times\Omega^{\delta_2}_v,
\Omega^{1-\gamma}_{u+v})\subset 
\tilde{\cG}_{\fsl(2)}(\Omega^{\delta_1}_u\times\Omega^{\delta_2}_v,
\Omega^{\gamma}_{u+v})$.

By condition (i) and Lemma \ref{zero_intersection}, the two subspaces
${\cG}_{\bW}(\Omega^{\delta_1}_u\times\Omega^{\delta_2}_v,
\Omega^{\gamma}_{u+v})$ and  $\phi_*{\cG}_{\bW}(\Omega^{\delta_1}_u\times\Omega^{\delta_2}_v,
\Omega^{1-\gamma}_{u+v})$ intersects trivially,
thus we have

\noindent $\dim {\cG}_{\bW}(\Omega^{\delta_1}_u\times\Omega^{\delta_2}_v,
\Omega^{\gamma}_{u+v})+
\dim {\cG}_{\bW}(\Omega^{\delta_1}_u\times\Omega^{\delta_2}_v,
\Omega^{1-\gamma}_{u+v})\leq$

\hskip7cm$\dim \tilde{\cG}_{\fsl(2)}(\Omega^{\delta_1}_u\times\Omega^{\delta_2}_v,
\Omega^{\gamma}_{u+v})$.

By conditions (i), (ii) and (iii), Lemma \ref{dim_tilde_cG},
 we have $\dim \tilde{\cG}_{\fsl(2)}(\Omega^{\delta_1}_u\times\Omega^{\delta_2}_v,
\Omega^{\gamma}_{u+v})\leq 1$, which proves the lemma.

\end{proof}

\subsection {Necessary condition for 
$\tilde{\cG}_{\fsl(2)}(\Omega^{\delta_1}_u\times\Omega^{\delta_2}_v,
\Omega^{\gamma}_{u+v})\neq 0$}\label{subsect_det}

Recall the notations of the previous section. For $k=1,\,2$, set

\begin{align*}
&a_k(x,y)=(x+k\delta_1)(y-k\delta_2), \\
&b_k(x,y)=k^2(\delta_1+\delta_2-\gamma-\delta_1^2-\delta_2^2+\gamma^2)-2xy, \\
&c_k(x,y)=(x-k\delta_1)(y+k\delta_2). 
\end{align*}

Given an auxiliary integer $l$, set
$A_{il}(x,y)=a_i(x+l,y-l)$, $B_{il}(x,y)=b_i(x+l,y-l)$ and
$C_{il}(x,y)=c_i(x+l,y-l)$ and  set

\[ \M=\begin{pmatrix}
   A_{1,5}(x,y) & B_{1,5}(x,y) & C_{1,5}(x,y) & 0 & 0 & 0 \\
   0 & A_{1,4}(x,y) & B_{1,4}(x,y) & C_{1,4}(x,y) & 0 & 0 \\
   0 & 0 & A_{1,3}(x,y) & B_{1,3}(x,y) & C_{1,3}(x,y) & 0 \\
   0 & 0 & 0 & A_{1,2}(x,y) & B_{1,2}(x,y) & C_{1,2}(x,y) \\
   A_{2,4}(x,y) & 0 & B_{2,4}(x,y) & 0 & C_{2,4}(x,y) & 0 \\
   0 & A_{2,3}(x,y) & 0 & B_{2,3}(x,y) & 0 & C_{2,3}(x,y) \end{pmatrix}, \]

\noindent Moreover set

\noindent\centerline{$\E_{\delta_1,\delta_2,\gamma}(x,y)=\det\M$.}

In what follows, we will consider $\E_{\delta_1,\delta_2,\gamma}(x,y)$ as a polynomial 
in the variables
$x$ and $y$, with parameters $\delta_1,\delta_2$ and $\gamma$. 

\begin{lemma}\label{vanishing_det}

 If $\tilde{\cG}_{\fsl(2)}(\Omega^{\delta_1}_u\times\Omega^{\delta_2}_v,
\Omega^{\gamma}_{u+v})\neq
0$ then $\E_{\delta_1,\delta_2,\gamma}(x,y)=0$, for all $(x,y)\in\nc^2$.
\end{lemma}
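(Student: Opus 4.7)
The plan is to interpret the matrix $\M$ as the coefficient matrix of a homogeneous linear system whose unknowns are the six consecutive values $X(x+j,y-j)$ for $j=1,\ldots,6$, then show that this system has a non-trivial solution on a Zariski dense set.

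First I would unpack the rows of $\M$ in terms of the recurrence relations of Lemma \ref{Casimir}. Writing $X_j:=X(x+j,y-j)$ and ordering the columns as $(X_6,X_5,X_4,X_3,X_2,X_1)$, rows 1--4 of $\M$ record the $k=1$ recurrence of Lemma \ref{Casimir} applied at the four base points $(x+l,y-l)$ for $l=5,4,3,2$, while rows 5--6 record the $k=2$ recurrence applied at $(x+4,y-4)$ and $(x+3,y-3)$. Thus, whenever all the shifts $(x+l,y-l)$ for $l=2,\ldots,5$ lie in $C(x_0,y_0)$, Lemma \ref{Casimir} ensures that the vector $\vec X(x,y):=(X_6,X_5,X_4,X_3,X_2,X_1)^t$ lies in the kernel of $\M(x,y)$.

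Next I would produce a non-zero element $\mu\in\tilde{\cG}_{\fsl(2)}(\Omega^{\delta_1}_u\times\Omega^{\delta_2}_v,\Omega^\gamma_{u+v})$, lift it to a representative $\pi\in\B_{L_0}$, and let $X$ be its coefficient function. Since $\cG(\pi)=\mu$ is $\fsl(2)$-equivariant and non-zero, Lemma \ref{consecutive} gives some $(x_0,y_0)\in u\times v$ such that for every adjacent pair $\{\alpha,\beta\}\subset C(x_0,y_0)$ one has $X(\alpha)\neq 0$ or $X(\beta)\neq 0$. Enlarging $(x_0,y_0)$ so that the recurrences of Lemma \ref{Casimir} also hold on $C(x_0,y_0)$, I then take any $(x,y)$ for which the six shifted points $(x+j,y-j)$, $j=1,\ldots,6$, all lie in $C(x_0,y_0)$. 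Among these six points, the pair $\{(x+1,y-1),(x+2,y-2)\}$ is adjacent, so $\vec X(x,y)$ is non-zero; combined with the kernel property, this forces $\E_{\delta_1,\delta_2,\gamma}(x,y)=\det\M(x,y)=0$.

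Finally I would observe that the set of $(x,y)$ satisfying the two-sided inclusion condition just described contains a translated coset grid of the form $(x_0+k, y_0+\ell)$ with $k,\ell$ non-negative integers beyond a threshold. Such a grid is Zariski dense in $\nc^2$. Since $\E_{\delta_1,\delta_2,\gamma}(x,y)$ is a polynomial in $x,y$ that vanishes on a Zariski dense set, it vanishes identically on $\nc^2$, proving the lemma. The only point that requires care is verifying that the non-vanishing of $\vec X(x,y)$ via Lemma \ref{consecutive} is automatic along the entire dense grid, which is why having the adjacent pair inside the window $\{(x+j,y-j)\}_{j=1}^6$ is essential; no further obstacle arises, since no computation with $\det\M$ itself is needed for the statement.
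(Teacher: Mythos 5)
Your proposal is correct and follows essentially the same route as the paper: use Lemma \ref{Casimir} to realize the rows of $\M$ as the $k=1,2$ recurrences at shifted base points (so that the vector of six consecutive values lies in $\ker\M$), use Lemma \ref{consecutive} on the adjacent pair $\{(x+1,y-1),(x+2,y-2)\}$ to see that this vector is non-zero, and conclude that $\det\M$ vanishes on a Zariski dense subset, hence identically. The paper states this more tersely, but your unpacking of which rows correspond to which recurrence and of the dense grid is exactly what is implicit there.
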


\begin{proof}
Assume that 
$\cG_{\bW}(\Omega^{\delta_1}_u\times\Omega^{\delta_2}_v,\Omega^{\gamma}_{u+v})\neq 0$.
Thus there exists a $L_0$-equivariant bilinear
map $\pi: \Omega^{\delta_1}_u\times\Omega^{\delta_2}_v\rightarrow\Omega^{\gamma}_{u+v}$
whose germ is a non-zero element of 
$\tilde{\cG}_{\fsl(2)}(\Omega^{\delta_1}_u\times\Omega^{\delta_2}_v,
\Omega^{\gamma}_{u+v})$. For $(x,y)\in u\times v$, define the scalar
$X(x,y)$ by the identity

\noindent\centerline{$\pi(e_x^{\delta_1},e_y^{\delta_2})=X(x,y)
e_{x+y}^{\gamma}$.} 

Set $\X(x,y)=(X(x+6,y-6), X(x+5,y-5),\dots,X(x+1,y-1))$.
Using Lemmas \ref{consecutive} and \ref{Casimir} there exists 
$(x_0,y_0)$ such that

(i) $(X(x+2,y-2),X(x+1,y-1))\neq 0$, and

(ii) $\M.^t\X(x,y)=0$,

\noindent for all $(x,y)\in C(x_0,y_0)$. The first assertion implies that
$\X(x,y)\neq 0$ for any $(x,y)\in C(x_0,y_0)$. Thus 
$\det\M$ vanishes on $C(x_0,y_0)$. Since $C(x_0,y_0)$ is
Zariski dense,  $\E_{\delta_1,\delta_2,\gamma}(x,y)=0$, for all $(x,y)\in\nc^2$.

\end{proof}

\subsection{Zeroes of the polynomials $p_{i,j}(\delta_1,\delta_2,\gamma)$}
\label{sect_poly-pij}

Define the polynomials $p_{i,j}(\delta_1,\delta_2,\gamma)$ by the identity

\noindent\centerline{$\E_{\delta_1,\delta_2,\gamma}(x,y)=
\sum_{i,j}p_{i,j}(\delta_1,\delta_2,\gamma) x^iy^j$.}

\noindent Since the entries of the matrix $\M$ are quadractic polynomials in
$x,y,\delta_1,\delta_2$ and $\gamma$, $\E_{\delta_1,\delta_2,\gamma}(x,y)$ is
a polynomial of degree $\leq 12$. Set

$C(\delta_1,\delta_2,\gamma)=(\delta_1+\delta_2+\gamma)(\delta_1+\delta_2-\gamma)
   (\delta_1+\delta_2+1-\gamma)(\delta_1+\delta_2-1+\gamma)$.

\begin{lemma}\label{divisibility}

(i) We have $p_{i,j}(\delta_1,\delta_2,\gamma)=p_{i,j}(\delta_1,\delta_2,1-\gamma)$,

(ii) Each polynomial $p_{i,j}(\delta_1,\delta_2,\gamma)$ is divisible by
$C(\delta_1,\delta_2,\gamma)$.
\end{lemma}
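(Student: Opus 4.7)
For part (i), I read off the symmetry directly from the matrix $\M$. The functions $a_k$ and $c_k$ do not involve $\gamma$, while $b_k$ depends on $\gamma$ only through the combination $\gamma^2-\gamma$, which is invariant under $\gamma\mapsto 1-\gamma$. Hence every entry of $\M$, its determinant $\E_{\delta_1,\delta_2,\gamma}(x,y)$, and each coefficient $p_{i,j}(\delta_1,\delta_2,\gamma)$ is invariant under this substitution.

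For part (ii), the plan is to exploit (i) to halve the work: under $\gamma\mapsto 1-\gamma$ the four factors of $C$ pair up as $(\delta_1+\delta_2-\gamma)\leftrightarrow(\delta_1+\delta_2-1+\gamma)$ and $(\delta_1+\delta_2+1-\gamma)\leftrightarrow(\delta_1+\delta_2+\gamma)$. These four linear polynomials are pairwise distinct irreducibles, so divisibility of $p_{i,j}$ by one factor from each pair automatically gives divisibility by the whole product $C$. I therefore need only produce a $\bW$-equivariant bilinear map with non-zero germ for each of the target weights $\gamma=\delta_1+\delta_2$ and $\gamma=\delta_1+\delta_2+1$, and then invoke Lemma \ref{vanishing_det}.

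For $\gamma=\delta_1+\delta_2$, the Poisson product $P^{\delta_1,\delta_2}_{u,v}$ from Section \ref{sect_example} is $\bW$-equivariant and non-degenerate for every $(\delta_1,\delta_2)\in\nc^2$, so by Lemma \ref{germ-deg} its germ is non-zero. Lemma \ref{vanishing_det} then yields $\E_{\delta_1,\delta_2,\delta_1+\delta_2}(x,y)\equiv 0$ in $(x,y)$ for every $(\delta_1,\delta_2)$, which forces $p_{i,j}(\delta_1,\delta_2,\delta_1+\delta_2)=0$ for all $i,j$ and hence divisibility of $p_{i,j}$ by $\delta_1+\delta_2-\gamma$. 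For $\gamma=\delta_1+\delta_2+1$, the bracket $B^{\delta_1,\delta_2}_{u,v}$ has symbol $X(x,y)=\delta_1 y-\delta_2 x$, which is non-zero (and the map non-degenerate with non-zero germ) whenever $(\delta_1,\delta_2)\neq(0,0)$. The same argument yields $\E_{\delta_1,\delta_2,\delta_1+\delta_2+1}(x,y)\equiv 0$ on the Zariski-dense locus $(\delta_1,\delta_2)\neq(0,0)$; by polynomial continuity this extends to all of $\nc^2$, so $\delta_1+\delta_2+1-\gamma$ divides $p_{i,j}$ as well. No serious obstacle arises; the only mild subtlety is the density argument absorbing the single degenerate case $(\delta_1,\delta_2)=(0,0)$ for the bracket.
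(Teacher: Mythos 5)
Your proof is correct and follows essentially the same route as the paper: part (i) from the $\gamma\mapsto 1-\gamma$ invariance of the entries of $\M$, and part (ii) by producing non-zero $\bW$-equivariant germs on the planes $\gamma=\delta_1+\delta_2$ and $\gamma=\delta_1+\delta_2+1$ (the paper cites Lemma \ref{lower} where you instead name $P^{\delta_1,\delta_2}_{u,v}$ and $B^{\delta_1,\delta_2}_{u,v}$ explicitly), then applying Lemma \ref{vanishing_det} and the symmetry to pick up the remaining two factors of $C$. Your extra Zariski-density step to absorb the degenerate case $(\delta_1,\delta_2)=(0,0)$ of the bracket is a sensible precaution that the paper leaves implicit.
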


\begin{proof} 
All entries of the matrix $\M$ are invariant under the involution 
$\gamma\mapsto 1-\gamma$, so we have

\noindent\centerline{$\E_{\delta_1,\delta_2,\gamma}(x,y)=\E_{\delta_1,\delta_2,1-\gamma}(x,y)$}

\noindent which implies the first assertion.

It follows from Lemma \ref{lower} that
$\cG_{\bW}(\Omega^{\delta_1}_u\times\Omega^{\delta_2}_v,\Omega^{\gamma}_{u+v})\neq 0$
whenever $\gamma=\delta_1+\delta_2$ or $\gamma=\delta_1+\delta_2+1$.

Hence by Lemma \ref{vanishing_det}, as a polynomial in $\delta_1,\delta_2,\gamma,x$ and $y$,
$\E_{\delta_1,\delta_2,\gamma}(x,y)$ is divisible by
$D(\delta_1,\delta_2,\gamma)$, where $D(\delta_1,\delta_2,\gamma)
=(\delta_1+\delta_2-\gamma) (\delta_1+\delta_2+1-\gamma)$. By the first assertion,
it is also divisible by 
$D(\delta_1,\delta_2,1-\gamma)$. Since we have 

\noindent\centerline{$C(\delta_1,\delta_2,\gamma)=
D(\delta_1,\delta_2,\gamma)D(\delta_1,\delta_2,1-\gamma)$}

\noindent  each $p_{i,j}$ is divisible $C(\delta_1,\delta_2,\gamma)$.
\end{proof}

Denote by  $\tau$  the involution 
$(\delta_1,\delta_2,\gamma)\mapsto(\delta_1,\delta_2,1-\gamma)$.
Let $\fZ\subset \nc^3$ be the following set

$\fZ=(\bigcup_{0\leq i\leq 1} H_i\cup H_i^\tau)
\bigcup (\bigcup_{1\leq i\leq 4} D_i\cup D_i^\tau)
\bigcup (\bigcup_{1\leq i\leq 2} \{ P_i\cup P_i^\tau)\}$,

\noindent where the planes $H_i$, the lines $D_i$ and the points
$P_i$ are defined in Section \ref{sect_example-germ}. For a polynomial 
$f$, denote by $Z(f)$ its zero set.

\begin{lemma}\label{zero_pij}
We have $Z(p_{1,3})\cap Z(p_{3,1})\cap Z(p_{2,2})\subset\fZ$.
\end{lemma}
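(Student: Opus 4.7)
The plan has several steps. First, by Lemma \ref{divisibility}, the quartic $C(\delta_1,\delta_2,\gamma)$ divides each of $p_{1,3}$, $p_{3,1}$ and $p_{2,2}$, and its zero locus $Z(C)$ is exactly the union of the four planes $H_0,H_1,H_0^\tau,H_1^\tau$ of $\fZ$. Writing $p_{i,j}=C\cdot q_{i,j}$, one has
\[
Z(p_{1,3})\cap Z(p_{3,1})\cap Z(p_{2,2})=Z(C)\cup\bigl(Z(q_{1,3})\cap Z(q_{3,1})\cap Z(q_{2,2})\bigr),
\]
so it suffices to prove that $Z(q_{1,3})\cap Z(q_{3,1})\cap Z(q_{2,2})\subset\fZ$, where $q_{1,3}$ and $q_{3,1}$ are of total degree four.

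The central step is the factorization announced in Section 0.3: a MAPLE-assisted expansion of $\det\M$ (see the Acknowledgment) shows that $q_{1,3}=L_1 L_2 Q_1$ and $q_{3,1}=L_3 L_4 Q_2$, where $L_1,\dots,L_4$ are affine-linear forms in $(\delta_1,\delta_2,\gamma)$, $Q_1$ and $Q_2$ are irreducible quadratic forms, and after a suitable normalization $Q_1-Q_2$ is itself a linear form $\ell$. This last coincidence is essential: it forces the intersection $Z(Q_1)\cap Z(Q_2)=Z(Q_1)\cap Z(\ell)$ to remain a planar conic rather than a genuine curve of degree four.

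Once the factorizations are in hand, I would decompose $Z(q_{1,3})\cap Z(q_{3,1})$ into a finite union of pieces of three types: the four line-line intersections $Z(L_i)\cap Z(L_j)$ with $i\in\{1,2\}$, $j\in\{3,4\}$; the four line-conic intersections $Z(L_i)\cap Z(Q_{j'})$; and the conic-conic intersection $Z(Q_1)\cap Z(\ell)$. A direct substitution then identifies each of the eight lines $D_k,D_k^\tau\subset\fZ$ with one of the intersections $Z(L_i)\cap Z(L_j)$, and no other such pairwise intersection leaves $\fZ$. The remaining pieces are planar conics; cutting them by $Z(q_{2,2})$ yields, by B\'ezout, only finitely many common points (since on each such conic the restriction of $q_{2,2}$ is non-zero), and a finite check identifies these points with the four points $P_1,P_1^\tau,P_2,P_2^\tau$ of $\fZ$ or with points already lying on $Z(C)$ or on one of the $D_k^{(\tau)}$.

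The main obstacle is purely computational: none of the extra linear factors $L_i$ nor the quadratic factors $Q_j$ are apparent from the structure of $\M$ (only four of the six linear factors of each $p_{1,3}, p_{3,1}$ are accounted for by $C$), and the fact that $Q_1-Q_2$ is linear is qualified as ``miraculous'' in the Introduction. Once the explicit forms of $L_1,\ldots,L_4,Q_1,Q_2$ (and of $q_{2,2}$) are recorded, with the full polynomials $p_{i,j}$ listed in Appendix A, the identification of every component with a piece of $\fZ$ reduces to a finite mechanical case analysis.
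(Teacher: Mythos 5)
Your overall skeleton coincides with the paper's: factor out $C$ using Lemma \ref{divisibility}, invoke the MAPLE-produced factorizations $-\tfrac18 q_{1,3}=L_1L_2Q$ and $-\tfrac18 q_{3,1}=L_1'L_2'Q'$ with $Q-Q'$ a scalar multiple of $\delta_1-\delta_2$, and run through the nine pairwise intersections of factors. However, two of your concrete claims are wrong, and one of them would make the argument fail as written. First, the linear factors are $\delta_1,\ \delta_1-1,\ \delta_2,\ \delta_2-1$, so the four line--line intersections $Z(L_i)\cap Z(L_j')$ are the vertical lines $\{\delta_1=a,\ \delta_2=b\}$ with $a,b\in\{0,1\}$; these are \emph{not} the lines $D_k,D_k^\tau$ of $\fZ$ and are not contained in $\fZ$ at all. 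They must be cut by $Z(q_{2,2})$, whose restriction to each such line is a non-trivial quadratic in $\gamma(1-\gamma)$, yielding four points each (e.g.\ $(0,0,-2),(0,0,-1),(0,0,2),(0,0,3)$ when $\delta_1=\delta_2=0$), which one then checks lie in $\fZ$.

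Second, and more seriously: the eight lines $D_k,D_k^\tau$ actually arise from the line--conic pieces $Z(L_i)\cap Z(Q')$ and $Z(Q)\cap Z(L_j')$, each of which is a degenerate planar conic splitting into two lines, e.g.\ $Q'(0,\delta_2,\gamma)=-(\gamma+\delta_2+1)(\gamma-\delta_2-2)$. On those lines $q_{2,2}$ vanishes identically (they lie in $\fz$, where every $p_{i,j}$ vanishes by Lemmas \ref{lower} and \ref{vanishing_det}, while $C$ does not vanish identically there), so your B\'ezout step --- ``cutting them by $Z(q_{2,2})$ yields only finitely many common points since the restriction of $q_{2,2}$ is non-zero'' --- is false for precisely these components. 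No rescue by $q_{2,2}$ is needed or possible there: the point is that each such component is already one of the $D_k$ or $D_k^\tau$, hence in $\fZ$. The only place where a B\'ezout-type cut is genuinely used is the conic--conic case, where $Z(Q)\cap Z(Q')$ lies in the plane $\delta_1=\delta_2$ and, after eliminating $\gamma(1-\gamma)$, $q_{2,2}$ restricts to $12\delta(3\delta+2)(\delta-1)^2$, giving six points, all in $\fZ$.
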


The proof requires the explicit computation of $\E_{\delta_1,\delta_2,\gamma}(x,y)=\det\M$, 
and we have used MAPLE for  this purpose.  The next proof contains the explicit expressions of
$p_{1,3}, p_{3,1}$ and $p_{2,2}$. The whole expression for $\det\M$ is given in Appendix A.

\begin{proof} 
{\it Step 1:}
By Lemma \ref{divisibility}, there are polynomials $q_{ij}$ such that

\noindent\centerline{$p_{ij}(\delta_1,\delta_2,\gamma)=C(\delta_1,\delta_2,\gamma)
q_{ij}(\delta_1,\delta_2,\gamma)$.}

\noindent Since $Z(C)$ is the union of the four planes $H_0, H_0^\tau, H_1, H_1^\tau$,
it remains to prove that   
$Z(q_{1,3})\cap Z(q_{3,1})\cap Z(q_{2,2})\subset\fZ$.

Using MAPLE, it turns out that

\vspace{-0.1in}
\begin{align*}
q_{2,2}=
&\gamma^2(1-\gamma)^2+2\gamma(1-\gamma)(\delta_1^2+\delta_2^2-2\delta_1
\delta_2-2(\delta_1+\delta_2)+4)\\
&+(\delta_1^4+\delta_2^4-4\delta_1\delta_2(\delta_1^2+
\delta_2^2)+38\delta_1^2\delta_2^2)-4(\delta_1+\delta_2)^3\\
&-(13(\delta_1^2+\delta_2^2)-6\delta_1\delta_2)+4(\delta_1+\delta_2)+12,
\end{align*}

\vspace{-0.3in}
\begin{align*} 
-\frac{1}{8}q_{1,3}=
&\delta_1(\delta_1-1)[\gamma(1-\gamma)
+\delta_1^1+\delta_2^2-4\delta_1\delta_2+3(\delta_1-\delta_2)+2].\\
\end{align*}

\vspace{-0.3in}
\begin{align*}
-\frac{1}{8}q_{3,1}=
&\delta_2(\delta_2-1)[\gamma(1-\gamma)
+\delta_1^1+\delta_2^2-4\delta_1\delta_2-3(\delta_1-\delta_2)+2], \\
\end{align*}

{\it Step 2:} The previous expressions provide (miraculous) factorizations

\noindent\centerline{$-\frac{1}{8} q_{1,3}=L_1 L_2 Q$ and $-\frac{1}{8} q_{3,1}=L'_1 L'_2 Q'$}

\noindent  where $L_1, L_2,L'_1,$ and $L'_2$ are degree one polynomials and $Q$ and $Q'$ are
quadratic polynomials. We have to 
prove that  $Z(P)\cap Z(P')\cap Z(q_{2,2})\subset \fZ$ for any  factor
$P$ of $q_{1,3}$ and any factor $P'$ of $q_{3,1}$. 
This amonts to 9 cases, which will be treated seperately.

{\it Step 3: proof that $Z(L_i)\cap Z(L'_j)\cap Z(q_{2,2})\subset \fZ$,
$\forall i,j\in\{1,2\}$.}

We claim that, in each case, the intersection consists of $4$ points lying in
$\fZ$. Since the four cases
are similar, we will  only consider the case where the first factor is $\delta_1$ and 
the second one is $\delta_2$.  

For a point $(0,0,\gamma)\in Z(\delta_1)\cap Z(\delta_2)\cap Z(q_{2,2})$, we have

\noindent\centerline{
$q_{2,2}(0,0,\gamma)=\gamma^2(1-\gamma)^2 + 8 \gamma(1-\gamma)+12$=0.}

\noindent Thus we have $\gamma(1-\gamma)=-2$ or $-6$. It follows that
$Z(\delta_1)\cap Z(\delta_2)\cap Z(p_{2,2})$
consists of the four points $(0,0,-2), (0,0,-1),(0,0,2),(0,0,3)$ which are 
all in $\fZ$.

{\it Step 4: proof that $Z(L_i)\cap Z(Q')\cap Z(q_{2,2})\subset \fZ$,
$\forall i\in\{1,2\}$.}

More precisely, we claim that the planar quadric $Z(L_i)\cap Z(Q')$ consists of two lines which
are both in $\fZ$. Since the two cases are similar, we will just treat
the case where the factor $L_i$ is $\delta_1$. 
We have 

$Q'(0,\delta_2,\gamma)=\gamma(1-\gamma)+\delta_2^2+3\delta_2+2$

\noindent \hskip2.65cm$=-(\gamma+\delta_2+1)(\gamma-\delta_2-2),$

\noindent which proves the claim. It follows that
$Z(L_i)\cap Z(Q')\cap Z(q_{2,2})\subset \fZ$.

{\it Step 5: Proof that $Z(Q)\cap Z(L_j')\cap Z(q_{2,2})\subset \fZ$,
$\forall j\in\{1,2\}$.}

This case is identical to the previous one.

{\it Step 6: Proof that $Z(Q)\cap Z(Q')\cap Z(q_{2,2})\subset \fZ$.} 

Indeed $Q'-Q$ is a scalar multiple of $\delta_1-\delta_2$ and therefore
$Z(Q)\cap Z(Q')$ is (again miraculously) a planar quadric. 

We have $Q(\delta,\delta,\gamma)= \gamma(1-\gamma)-2\delta^2+2$,
so $Z(Q)\cap Z(Q')$ is the sets of all $(\delta,\delta,\gamma)\in\nc^3$
such that $\gamma(1-\gamma)=2\delta^2-2$. Since $q_{2,2}(\delta,\delta,\gamma)$ is a 
polynomial in $\delta$ and $\gamma(1-\gamma)$ we can 
eliminate $\gamma(1-\gamma)$. We have

\noindent\centerline{
$q_{2,2}(\delta,\delta,\gamma)=12\delta(3\delta+2)(\delta-1)^2$}

\noindent for any $(\delta,\delta,\gamma)\in Z(Q)\cap Z(Q')$. It follows that
$Z(Q)\cap Z(Q')\cap Z(q_{2,2})$ consists of the $6$ points
$(1,1,0), (1,1,1), (0,0,-1),(0,0,2),(-2/3,-2/3,-2/3)$ and $(-2/3,-2/3,5/3)$.
Since there are all in $\fZ$, the proof is complete.

\end{proof}

With more care, it is easy to prove that $\bigcap Z(p_{i,j})$ is precisely $\fZ$
but this is not necessary for what follows.

\subsection{Determination of $\cG_{\bW}(M\times N,P)$}

Recall that $\fz^*$ the set of all $(\delta_1,\delta_2,\gamma)\in\fz$ such that
$\{\delta_1,\delta_2,\gamma\}\not\subset\{0,1\}$. 
Let $M$, $N$ and $P$ be in $\cS$. In order to determine $\cG_{\bW}(M\times N,P)$
we will always assume that

\noindent\centerline {$\Supp P=\Supp M+\Supp N$.}

\noindent Otherwise $\cG_{\bW}(M\times N,P)$ would be obviously zero.
Next let $\delta_1\in\deg M$, $\delta_2\in\deg N$ and $\gamma\in\deg P$.

\begin{thm}\label{thm2} 
We have

(i) $\dim \cG_{\bW}(M\times N,P)=2$ if $\{\delta_1,\delta_2,\gamma\}\subset\{0,1\}$,
and the maps $\pi_1$, $\pi_2$ of Lemma \ref{lower} form a basis of this space,

(ii) $\dim \cG_{\bW}(M\times N,P)=1$ if $(\delta_1,\delta_2,\gamma)\in \fz^*$
and the map $\pi$ of Table \ref{table1} provides a  generator of this space,

(iii) $\dim \cG_{\bW}(M\times N,P)=0$ otherwise.
\end{thm}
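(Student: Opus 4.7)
The plan is to settle parts (i) and (ii) quickly from the preceding lemmas, and then focus on part (iii), the vanishing statement. For part (i), Lemma~\ref{lower}(ii) exhibits two non-proportional germs $\pi_1,\pi_2$ in $\cG_\bW(M\times N,P)$, while Lemma~\ref{germ_dim=2} asserts that $\dim\cG_\bW(M\times N,P)=2$ precisely when $\deg M = \deg N = \deg P = \{0,1\}$; hence $\pi_1,\pi_2$ form a basis. For part (ii), Lemma~\ref{lower}(i) supplies the generator from Table~\ref{table1}, and since $(\delta_1,\delta_2,\gamma)\in\fz^*$ forbids the triple of degrees from lying in $\{0,1\}$, the contrapositive of Lemma~\ref{germ_dim=2} gives the upper bound $\dim\cG_\bW(M\times N,P)\leq 1$.

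For part (iii), I would argue by contradiction: suppose $(\delta_1,\delta_2,\gamma)\notin\fz$ and $\cG_\bW(M\times N,P)\neq 0$. Using Lemma~\ref{W-germs} I reduce to the tensor-density case $M=\Omega^{\delta_1}_u$, $N=\Omega^{\delta_2}_v$, $P=\Omega^\gamma_{u+v}$. Since every $\bW$-equivariant germ is $\fsl(2)$-equivariant and satisfies the vanishing of $\tilde{\cdot}$, Lemma~\ref{vanishing_det} forces the polynomial identity $\E_{\delta_1,\delta_2,\gamma}(x,y)\equiv 0$; in particular all coefficients $p_{i,j}(\delta_1,\delta_2,\gamma)$ vanish, and Lemma~\ref{zero_pij} then places the triple in $\fZ$.

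It remains to exclude the points of $\fZ\setminus\fz$. The key observation is that $\fZ$ is stable under the involution $\tau:(\delta_1,\delta_2,\gamma)\mapsto(\delta_1,\delta_2,1-\gamma)$, while $\fz$ is not: each component of $\fZ\setminus\fz$ is contained in the $\tau$-image of a component of $\fz$ (e.g.\ $H_i^\tau$, $D_i^\tau$, $P_i^\tau$). Consequently, if $(\delta_1,\delta_2,\gamma)\in\fZ\setminus\fz$ then $(\delta_1,\delta_2,1-\gamma)\in\fz$, so Lemma~\ref{lower} yields $\dim\cG_\bW(M\times N,\Omega^{1-\gamma}_{u+v})\geq 1$. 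Whenever the hypotheses of Lemma~\ref{inequality} hold, that lemma gives
\[
\dim\cG_\bW(M\times N,\Omega^\gamma_{u+v})+\dim\cG_\bW(M\times N,\Omega^{1-\gamma}_{u+v})\leq 1,
\]
forcing $\dim\cG_\bW(M\times N,\Omega^\gamma_{u+v})=0$, contrary to our standing hypothesis.

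The main obstacle is the residual bookkeeping when the hypotheses of Lemma~\ref{inequality} fail, namely $\gamma\in\{0,1/2,1\}$ or $\{\delta_1,\delta_2\}\cap\{-1/2\}\neq\emptyset$ with the companion degree in $\{0,1\}$ and $\gamma\in\{-1/2,3/2\}$. For $\gamma\in\{0,1\}$ I would use the $\fS_3$-symmetry of Lemma~\ref{lemma_symmetry-bilinear} to transport the role of `$\gamma$' into one of the $\delta_i$ slots, after which the previous argument re-applies to the permuted triple. The case $\gamma=1/2$ is handled by inspection: one checks directly from the explicit description of $\fZ$ that the slice $\fZ\cap\{\gamma=1/2\}$ is already contained in $\fz$. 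The $-1/2$ configurations are finite in number and can be dispatched using the classification in Lemma~\ref{dim_tilde_cG} together with the lower bounds of Lemma~\ref{lower}. Each individual exceptional case is routine, but their enumeration is what makes the argument long.
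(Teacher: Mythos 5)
Your proposal follows the paper's own proof almost step for step: parts (i) and (ii) come from Lemmas \ref{lower}, \ref{Gsl(2)} and \ref{germ_dim=2} exactly as you say, and part (iii) proceeds, as in the paper, by combining Lemmas \ref{vanishing_det} and \ref{zero_pij} to place the triple in $\fZ$, then using $\fZ\subset\fz\cup\fz^\tau$ together with Lemma \ref{lower} applied to $(\delta_1,\delta_2,1-\gamma)$ and the bound of Lemma \ref{inequality} to exclude $\fZ\setminus\fz$. Your treatment of the $-1/2$ configurations and of $\gamma=1/2$ is also what the paper does: the eight $-1/2$ triples all lie in $\fz$, so there is nothing left to exclude there.

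The one step that does not work as written is your handling of $\gamma\in\{0,1\}$ by ``transporting $\gamma$ into a $\delta_i$ slot'' via Lemma \ref{lemma_symmetry-bilinear}. That lemma is an $\fS_3$-symmetry of the spaces $\B_{\fg}(M\times N,P^*)$ of honest bilinear maps; it does not descend to the germ spaces $\cG_{\bW}(M\times N,P)$, because the germ is taken as $x\to+\infty$ in the $M$ and $N$ slots only, and dualizing $P$ reverses its support. The paper says this explicitly in Section 8 (``the definition of $\cG_{\bW}(M\times N,P)$ is not $\fS_3$-symmetric''): only $\overline{\B}_{\bW}=\B_{\bW}/\B^0_{\bW}$ is symmetric, and the entire content of Theorem 3 is that this image can be a proper subspace of $\cG_{\bW}$. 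The paper instead disposes of $\gamma\in\{0,1/2,1\}$ by noting that $\cG_{\bW}(\Omega^{\gamma}_{u+v})=\cG_{\bW}(\Omega^{1-\gamma}_{u+v})$ as germs of modules, so the two targets give literally the same germ space. In fact, for your contradiction argument the case is vacuous: the same direct inspection you propose for $\gamma=1/2$ shows $\fZ\cap\{\gamma\in\{0,1\}\}\subset\fz$ (for instance $H_0^\tau\cap\{\gamma=1\}$ lies in $H_1$, $D_1^\tau\cap\{\gamma=1\}=P_2$, and $P_2^\tau=(0,-2,0)$ lies on $D_1$). So the flaw is easily repaired, but the $\fS_3$ argument itself must be replaced.
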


\begin{proof} Set $u=\Supp M$ and $v=\Supp N$. By Lemmas \ref{W-germs} and \ref{lemma_germ-gen}, we can assume that
$M=\Omega_u^{\delta_1}$, $N=\Omega_v^{\delta_2}$ and 
$P=\Omega_{u+v}^{\gamma}$.

{\it Step 1:} We claim that  $(\delta_1,\delta_2,\gamma)$ belongs to $\fz$ 
if $\cG_{\bW}(M\times N,P)\neq 0$. 

Assume that $\cG_{\bW}(M\times N,P)\neq 0$. Since
$\tilde{\cG}_{\fsl(2)}(M\times N,P)\neq 0$ it follows from Lemmas
\ref{vanishing_det} and \ref{zero_pij} that 
$(\delta_1,\delta_2,\gamma)$ belongs to $\fZ$. It
is clear from its definition that  $\fZ\subset \fz\cup\fz^\tau$. Hence 
$(\delta_1,\delta_2,\gamma)$ or $(\delta_1,\delta_2,1-\gamma)$ belongs
to $\fz$. 

When $(\delta_1,\delta_2,1-\gamma)\notin\fz$  the claim is proved. 
 Moreover if $\gamma=0,1/2$ or $1$, we have
$\cG_{\bW}(\Omega_{u+v}^{\gamma})=\cG_{\bW}(\Omega_{u+v}^{1-\gamma})$ and thus
$\cG_{\bW}(\Omega_u^{\delta_1}\times \Omega_v^{\delta_2}, \Omega_{u+v}^{1-\gamma})
=\cG_{\bW}(\Omega_u^{\delta_1}\times \Omega_v^{\delta_2}, \Omega_{u+v}^{\gamma})$, which proves
the claim in this case. Therefore, we can assume that 
$(\delta_1,\delta_2,1-\gamma)\in \fz$ and that $\gamma\notin\{0,1/2,1\}$.

By Lemma \ref{lower}, we have
$\cG_{\bW}(\Omega_u^{\delta_1}\times \Omega_v^{\delta_2}, \Omega_{u+v}^{1-\gamma})\neq 0$.
Therefore  it follows that

\noindent\centerline{
$\dim \cG_{\bW}(\Omega_u^{\delta_1}\times \Omega_v^{\delta_2}, \Omega_{u+v}^{\gamma})
+\dim \cG_{\bW}(\Omega_u^{\delta_1}\times \Omega_v^{\delta_2}, \Omega_{u+v}^{1-\gamma})
\geq 2$.}

\noindent  By Lemma \ref{inequality} we have

(i) $\delta_1=-1/2$, $\delta_2 \in \{0,1\}$ and $\gamma\in\{-1/2,3/2\}$, or 

(ii) $\delta_1 \in\{0,1\}$, $\delta_2=-1/2$, and $\gamma\in\{-1/2,3/2\}$.

\noindent These $8$ possible triples for $(\delta_1,\delta_2,\gamma)$ belong
to $\fz$ and therefore the claim is proved.

{\it Step 2:} Assertion (i) follows from Lemma \ref{germ_dim=2}. From now on,  we can assume that
$\{\delta_1,\delta_2,\gamma\}\not\subset\{0,1\}$.
It follows that 
$\dim \cG_{\bW}(\Omega_u^{\delta_1}\times \Omega_v^{\delta_2}, 
\Omega_{u+v}^{\gamma})=0$ or $1$. In particular Assertion (ii) and (iii) are equivalent
and it is enough to prove the first one.

If $(\delta_1,\delta_2,\gamma)\in \fz^*$ we have
$\cG_\bW(\Omega_u^{\delta_1}\times \Omega_v^{\delta_2}, 
\Omega_{u+v}^{\gamma})\neq 0$ by Lemma \ref{lower} and therefore
$\dim \cG_\bW(\Omega_u^{\delta_1}\times \Omega_v^{\delta_2}, 
\Omega_{u+v}^{\gamma})=1$. Conversely if
$\dim \cG_\bW(\Omega_u^{\delta_1}\times \Omega_v^{\delta_2}, 
\Omega_{u+v}^{\gamma})=1$ it follows from the previous step that
$(\delta_1,\delta_2,\gamma)$ belongs to  $\fz^*$. Thus assertion
(ii) is proved.
\end{proof}

\section{On the map $\B_{\bW}(M\times N,P)\rightarrow\cG_{\bW}(M\times N,P)$}

Let $M,N$ and $P$ be in $\cS$. 
The space $\cG_{\bW}(M\times N,P)$ has been determined by Theorem 2.
In particular $\cG_{\bW}(M\times N,P)$ has always dimension $0$, $1$ or $2$.
In the  Sections 8-10, we determine  which germs $\mu\in \cG_{\bW}(M\times N,P)$
can be lifted to a $\bW$-equivariant bilinear map
$\pi:M\times N\rightarrow P$. Since the final result contains many
particular case, it has been split into two parts. Indeed  Theorem 3.1
(in Section 9) involves  the case where $\cG_{\bW}(M\times N,P)$ has dimension one, and 
and Theorem 3.2 (in Section 10)  involves  the case where $\cG_{\bW}(M\times N,P)$ has dimension
two. 
In this section, we recall general facts and conventions used in Sections 9 and 10.

\subsection{Germs and $\fS_3$-symmetry}
Let $M,N,P\in{\cS}$. Recall the exact sequence:

\noindent\centerline{
$0\rightarrow \B_{\bW}^0(M\times N,P)\rightarrow
\B_{\bW}(M\times N,P)\rightarrow \cG_{\bW}(M\times N,P)$.}
 
Determining the image of the map
$\B_{\bW}(M\times N,P)\rightarrow \cG_{\bW}(M\times N,P)$
is easy, but it requires a very long case-by-case analysis.
It would be pleasant 
to use the $\fS_3$-symmetry to reduce the number of cases. Unfortunately
the definition of $\cG_{\bW}(M\times N,P)$ is not $\fS_3$-symmetric. 
However, set

\noindent\centerline{
$\overline{\B}_{\bW}(M\times N,P)= \B_{\bW}(M\times N,P)/\B^{0}_{\bW}(M\times N,P)$.}

\begin{lemma} For any $M,N$ and $P\in\cS$, we have

\noindent\centerline{
$\overline{\B}_{\bW}(M\times N,P^*)=\overline{\B}_{\bW}(M\times P,N^*)$}
\end{lemma}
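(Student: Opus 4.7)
The plan is to promote the $\fS_3$-symmetry of Lemma \ref{lemma_symmetry-bilinear} to the quotient $\overline{\B}_{\bW}$. Concretely, I would check that the canonical isomorphism
$$\Phi: \B_{\bW}(M\times N, P^*) \stackrel{\sim}{\longrightarrow} \B_{\bW}(M\times P, N^*)$$
provided by that lemma carries $\B^0_{\bW}(M\times N, P^*)$ onto $\B^0_{\bW}(M\times P, N^*)$, and then pass to the quotient.

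To implement this, I would identify both $\B$-spaces with the space of $\bW$-invariant trilinear forms $T: M\times N\times P\to\nc$, with $\pi\leftrightarrow\pi'$ corresponding to the rule $T(m,n,p)=\langle\pi(m,n),p\rangle=\langle\pi'(m,p),n\rangle$. Since every graded piece of a module in $\cS$ is at most one-dimensional, $L_0$-equivariance forces $T$ to be supported on the $2$-plane $\Pi=\{(u,v,w)\in\nc^3 : u+v+w=0\}$, and I may introduce the common ``trilinear support'' $\Sigma\subset\Pi$ where $T$ has a nonzero component. The two linear projections $\Pi\to\nc^2$ given by $(u,v,w)\mapsto (u,v)$ and $(u,v,w)\mapsto(u,w)$ are bijections (the relation $u+v+w=0$ recovers the missing coordinate), and they send $\Sigma$ bijectively to $\Supp\pi$ and $\Supp\pi'$ respectively.

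Since a linear isomorphism preserves Zariski density, it follows that $\Supp\pi$ is dense in $\nc^2$ iff $\Supp\pi'$ is, i.e.\ $\pi$ is non-degenerate iff $\pi'$ is non-degenerate. By Lemma \ref{germ-deg}, a $\bW$-equivariant bilinear map lies in $\B^0_{\bW}$ precisely when it is degenerate, so $\Phi$ matches the two subspaces $\B^0_{\bW}$ on the nose and therefore descends to the claimed isomorphism $\overline{\B}_{\bW}(M\times N, P^*)\simeq \overline{\B}_{\bW}(M\times P, N^*)$ on quotients.

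The only delicate point is confirming that the nonvanishing of $\pi$ on $M_u\times N_v$ is truly equivalent to the nonvanishing of the corresponding component of $T$. This is where the one-dimensionality of graded pieces in $\cS$ is essential: $P^*_{u+v}$ is one-dimensional (or zero), so pairing $\pi(M_u, N_v)$ against a basis vector of $P_{-u-v}$ detects nonvanishing. Beyond this routine check, no real obstacle is expected.
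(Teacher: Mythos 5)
Your proposal is correct and follows essentially the same route as the paper: the paper also invokes Lemma \ref{germ-deg} to identify $\B^0_{\bW}$ with the space of degenerate $\bW$-equivariant maps and then observes that degeneracy is an $\fS_3$-symmetric notion, so the quotient inherits the symmetry of Lemma \ref{lemma_symmetry-bilinear}. Your explicit verification — via the trilinear form supported on $u+v+w=0$ and the linear bijections between the two supports preserving Zariski density — is exactly the detail the paper leaves implicit.
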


\begin{proof}
By Lemma \ref{germ-deg}, the space $\B^{0}_{\bW}(M\times N,P^*)$ is exactly
the space of degenerate $\bW$-equivariant maps from $M\times N$ to $P^\ast$. 
Hence $\B^{0}_{\bW}(M\times N,P^*)$ and $\overline{\B}_{\bW}(M\times N,P^*)$ are
fully symmetric in $M,N$ and $P$.
\end{proof}

\subsection{List of cases for the proof of Theorem 3}
Start with a general result:
\begin{lemma}\label{lemma_simple}
Let  $M,N$ and $P\in\cS$ be irreducible $\bW$-modules. We have

\noindent\centerline{$\B_\bW(M\times N,P)\simeq\cG_{\bW}(M\times N,P)$.}
\end{lemma}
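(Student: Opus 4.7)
The plan is to establish the isomorphism by treating the kernel and the image of the natural map
$\B_{\bW}(M\times N,P)\to\cG_{\bW}(M\times N,P)$
coming from the exact sequence recalled at the beginning of this section. Its kernel is $\B^0_{\bW}(M\times N,P)$, which by Lemma~\ref{germ-deg} consists exactly of the degenerate $\bW$-equivariant bilinear maps from $M\times N$ to $P$. Hence injectivity reduces to showing that no non-zero degenerate $\bW$-equivariant bilinear map exists when $M$, $N$ and $P$ are all irreducible.

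For injectivity, I would invoke Theorem~1 of Section~\ref{sect_Thm1}, which lists, up to $\fS_3$-symmetry, every non-zero $\bW$-equivariant degenerate bilinear map between indecomposable modules of $\cS$. A direct inspection of the four cases (i)--(iv) shows that each of them requires at least one of the three modules to lie in the $A$-family or in the $B$-family. By the Kaplansky--Santharoubane theorem, the modules $A_\xi$ and $B_\xi$ are precisely the reducible indecomposable objects of $\cS$, so the irreducibility hypothesis on $M$, $N$ and $P$ excludes every case and forces $\B^0_{\bW}(M\times N,P)=0$.

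For surjectivity, I would combine Theorem~\ref{thm2} with Lemma~\ref{lower}: whenever $\cG_{\bW}(M\times N,P)$ is non-zero, explicit generators are displayed as germs of genuine $\bW$-equivariant bilinear maps, namely the entries of Table~\ref{table1} when $\dim\cG_{\bW}(M\times N,P)=1$ and the maps $\pi_1=P^{0,1}_{u,v}\circ(\id\times d)$, $\pi_2=P^{1,0}_{u,v}\circ(d\times\id)$ when $\dim\cG_{\bW}(M\times N,P)=2$ (which, for irreducible $M,N,P$, only arises when $M\cong\Omega^0_u$, $N\cong\Omega^0_v$, $P\cong\Omega^1_{u+v}$ with $u,v,u+v$ all non-zero in $\nc/\nz$). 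Since each of these generators is tautologically the germ of an actual bilinear map in $\B_{\bW}(M\times N,P)$, the natural map is surjective, and combined with injectivity yields the desired isomorphism.

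I do not anticipate any serious obstacle: this lemma is essentially a corollary of Theorems~1 and~\ref{thm2}, both already established. The only point to track carefully is that the irreducibility assumption excludes every degenerate possibility listed in Theorem~1, which is immediate from the Kaplansky--Santharoubane classification.
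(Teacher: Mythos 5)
Your proof is correct and follows essentially the same route as the paper's: injectivity comes from the classification of degenerate maps (Theorem 1 / Table \ref{table2}), all of whose cases require a module in the $AB$-family and are thus excluded by irreducibility, and surjectivity comes from the fact that the generators of $\cG_{\bW}(M\times N,P)$ exhibited in Table \ref{table1} and Lemma \ref{lower} are germs of genuine bilinear maps (your observation that irreducibility rules out the cases where $d^{-1}$ is only a germ-level inverse is a worthwhile detail the paper leaves implicit).
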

 
\begin{proof} Looking at Table \ref{table2}, it is clear that  
$\B^0_\bW(M\times N,P)=0$ whenever $M,N$ and $P$ are irreducible.
Moreover, it is clear from Table \ref{table1} that any germ
can be lifted.
\end{proof}

Let $M,N$ and $P\in \cS$. In order to determine $\overline{\B}_{\bW}(M\times N,P)$,
we will always tacitely assume that $\cG_{\bW}(M\times N,P)\neq 0$. By the previous lemma,
we can assume that at least one module is reducible, i.e. in the $AB$-family.
As usual, we will assume that all modules are indecomposable. Using the 
$\fS_3$-symmetry, we can reduce to the following 6 cases:

\begin{enumerate}
\item $\deg M=\deg N=\{0,1\}$ and $\deg P=2$,
\item $\deg M=\deg N=\{0,1\}$ and $\deg P=3$,
\item $\deg M=\{0,1\}$, $\deg N=\delta$ and $\deg P=\delta$ with 
$\delta \in \nc \setminus \{0,1\}$.
\item $\deg M=\{0,1\}$, $\deg N=\delta$ and $\deg P=\delta+1$ with
 $\delta \in \nc \setminus \{0,1\}$.
\item $\deg M=\{0,1\}$, $\deg N=\delta$ and $\deg P=\delta+2$ with 
$\delta \in \nc \setminus \{0,1\}$
\item $\deg M=\deg N=\deg P=\{0,1\}$.
\end{enumerate}

The cases case 1-5 are treated in Section 9. In this case, we have $\dim\cG_\bW(M\times N,P)=1$,
so it is enough to decide if ${\overline\B}_\bW(M\times N,P)$ is zero or not. The
case 6 is treated in section 10. In this case,
$\cG_\bW(M\times N,P)$ is two dimensional and the  analysis is more involved.

\subsection{Typical argument for the proof of Theorem 3}
Let $M \in \cS$ and let $u$ be its support. In Sections 9 and 10, we will denote by $(e_x^M)_{x \in u}$ a basis of $M$ as in Section \ref{KS_Theorem}.

The proofs of Theorems 3.1 and 3.2 are given by several lemmas and a repeated procedure,
that we call an \textit{argument by restriction}, which is described as follows. 

For an integer $d \in \nz_{>1}$, the subalgebra $\bW^{(d)}:=\bigoplus_{n} \nc L_{dn}$ is isomorphic to $\bW$. 
Let $M$ be a $\bW$-module in the class $\cS$ and let $x \in \Supp\,M$. 
The subspace
\[ M_d(x):=\bigoplus_{\substack{y \in u \\ x-y \in d\nz}} M_y \]
of $M$ is a $\bW^{(d)}$-module. Moreover, when $x \not\in d\nz$, $M_d(x)$ is irreducible. 

Now, let $M,N$ and $P$ be $\bW$-modules in the class $\cS$, let $x \in \Supp\,M,~y \in \Supp\, N$ and let $\overline{\pi} \in\cG_{\bW}(M\times N,P)$. Since $\cG_{\bW^{(d)}}(X\times Y,Z)\simeq \B_{\bW^{(d)}}(X\times Y,Z)$ whenever $X,Y$ and $Z$ are irreducible $\bW^{(d)}$-modules of the class $\cS$, $\overline{\pi}$ has unique lifting $\pi$ to $M_d(x)\times N_d(y)$ whenever $x,y,x+y \not\in d\nz$. 
Hence, varying $d$ and $x,y$, we see that $\pi(e_x^M,e_y^N)$ is uniquely determined by $\overline{\pi}$ whenever $x,y,x+y\neq 0$.

\section{Computation of $\overline{\B}_{\bW}(M\times N,P)$ when
$\dim\cG_{\bW}(M\times N,P)=1$}

\subsection{The Theorem 3.1}

The following table provides a list of triples $(M,\,N,\,P)$  of $\bW$-modules of
the class  $\cS$ and one non-zero element
$\pi\in {\overline\B}_{\bW}(M\times N,P)$. 
Since for each entry $(M,\,N,\,P)$
of Table \ref{table4} we have  $\deg M$ or $\deg N$ or $\deg P\neq \{0,1\}$, the
corresponding   $\pi$ is a basis of
${\overline\B}_{\bW}(M\times N,P)$.

In what follows, we  denote by $d_\xi$ and $d^\xi$ the natural maps:

\centerline{$d^\xi:\Omega^0_0\rightarrow A_\xi$ and
$d_\xi: B_\xi\rightarrow \Omega^1_0$.}

\begin{table}[h]\caption{\textbf{Non-zero ${\overline\B}_{\bW}(M\times N,P)$,
when $\deg M, \,\deg N$ or $\deg P\neq \{0,1\}$}}
\label{table4}

\begin{center}
\begin{tabular}{|c|c|c||c|} \hline

   & $M\times N$ or $N\times M$ & $P$ & $\pi$   \\ \hline

1.& $\Omega^{\delta_1}_u\times\Omega^{\delta_2}_{v}$ & 
$\Omega^{\delta_1+\delta_2}_{u+v}$& $P^{\delta_1,\delta_2}_{u,v}$\\ \hline

2.& $\Omega^{\delta_1}_u\times\Omega^{\delta_2}_{v}$ & 
$\Omega^{\delta_1+\delta_2+1}_{u+v}$&$B^{\delta_1,\delta_2}_{u,v}$
\\ \hline

3.& $A_\xi\times\Omega^{\delta}_{u}$ & 
$\Omega^{\delta+1}_{u}$&$B^{0,\delta}_{0,u}(\xi)$
\\ \hline

4.& $\Omega^{\delta}_{u}\times \Omega^{-\delta}_{-u}$ & 
$B_\xi$&$B^{\delta,-\delta}_{u,-u}(\xi)$
\\ \hline

5.& $\Omega^{-2/3}_u\times \Omega^{-2/3}_{v}$ & 
$\Omega^{5/3}_{u+v}$&$G_{u,v}$  \\
\hline

6.& $B_\xi\times\Omega^{\delta}_{u}$ & 
$\Omega^{\delta+1}_{u}$&$P^{1,\delta}_{0,u}\circ (d_\xi\times id)$
\\ \hline 

7.& $\Omega^{\delta}_{u}\times \Omega^{-\delta}_{-u}$ & 
$A_\xi$&$\d^\xi\circ P^{\delta,-\delta}_{u,-u}$
\\ \hline 

8.& $B_\xi\times \Omega^{\delta}_{u}$ & 
$\Omega^{\delta+2}_{u}$&$B^{1,\delta}_{0,u}\circ (d_\xi\times id)$ 
 \\ \hline

9.& $\Omega^{\delta}_{u}\times \Omega^{-\delta-1}_{-u}$ & 
$A_\xi$&$\d^\xi\circ B^{\delta,-\delta-1}_{u,-u}$ 
 \\ \hline

10.& $A_{\eta}\times B_{\xi}$ & 
$\Omega^{2}_{0}$&$B^{0,1}_{0,0}(\eta)\circ(id\times d_{\xi})$  \\ \hline

11.& $A_{\eta}\times \Omega^{-1}_0$ &   $A_{\xi}$&
$\d^{\xi}\circ B^{0,-1}_{0,0}(\eta)$ 
\\ \hline

12. & $B_{\xi}\times \Omega^{-1}_0$ &   $B_{\eta}$&
$B^{1,-1}_{0,0}(\eta)(d_{\xi}\times id)$ \\ \hline

 13.& $B_{\eta}\times B_\xi$ &   $\Omega^{2}_{0}$&
$P^{1,1}_{0,0}\circ(d_{\eta}\times d_{\xi})$    \\
\hline

14.& $B_{\eta}\times \Omega^{-1}_0$&   $A_{\xi}$&
$d^{\xi}\circ P^{1,-1}_{0,0}\circ(d_{\eta}\times id)$   \\
\hline

15.&$B_{\eta}\times B_{\xi}$ & $\Omega^3_0$ & $B^{1,1}_{0,0}\circ(d_{\eta}\times
d_{\xi})$\\ \hline

16.&$B_{\eta}\times \Omega^{-2}_0$ & $A_\xi$ & 
$d^\xi\circ B^{1,-2}_{0,0}\circ(d_{\eta}\times id)$\\ \hline

\end{tabular}
\end{center}

\vbox{\small{ 
The degree condition implies the following restrictions:
$\{\delta_1,\delta_2,\delta_1+\delta_2\}\not\subset\{0,1\}$ in line 1,
$(\delta_1,\delta_2)\neq (0,0)$ in line 2, 
$\delta\neq 0$ in lines 3 and 4, and 
$\delta\neq 0,\,1$ in lines 6 and 7.}}

\end{table}

Conversely, we have

\begin{thm}\hskip-1.7mm{\bf 1} 
Let $M, N$ and $P$ be $\bW$-modules of the class 
$\cS$ with $\deg M$ or $\deg N$ or $\deg P\neq \{0,1\}$.
Then ${\overline\B}_{\bW}(M\times N,P)$ has dimension one if the triple
$(M,N,P)$ appears in Table \ref{table4}. Otherwise, we have
${\overline\B}_{\bW}(M\times N,P)=0$.
\end{thm}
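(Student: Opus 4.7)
The starting point is the injection
\[
\overline{\B}_{\bW}(M\times N,P)\hookrightarrow \cG_{\bW}(M\times N,P),
\]
which under the hypothesis $\{\deg M,\deg N,\deg P\}\not\subset\{0,1\}$ has target of dimension at most $1$ by Theorem 2. Thus the theorem reduces to deciding liftability of the unique (up to scalar) nonzero germ in each case. The space of possibilities is finite: $(\delta_1,\delta_2,\gamma)$ must lie in $\fz^*$ (Theorem 2), and when $\delta_i\in\{0,1\}$ the corresponding indecomposable module may be either a tensor density module or in the $AB$-family. One then enumerates these configurations and separates them into those appearing in Table \ref{table4} and those that do not.

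For the positive direction, each entry in Table \ref{table4} furnishes an explicit $\pi\in \B_{\bW}(M\times N,P)$ obtained by composing the primitive operators $P^{\cdot,\cdot}_{\cdot,\cdot}$, $B^{\cdot,\cdot}_{\cdot,\cdot}$, $B^{\cdot,\cdot}_{\cdot,\cdot}(\xi)$, $G$, together with the $\bW$-morphisms $d$, $d_\xi$, $d^\xi$ introduced in Section \ref{sect_example}. Each such $\pi$ is $\bW$-equivariant as a composition of $\bW$-equivariant maps, and a direct computation shows that $\cG(\pi)$ coincides (up to a nonzero scalar) with the entry of Table \ref{table1} associated to the triple $(\delta_1,\delta_2,\gamma)$. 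Hence $\overline{\B}_{\bW}(M\times N,P)$ has dimension at least $1$, and therefore exactly $1$.

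For the negative direction, fix a triple $(M,N,P)$ with $\cG_{\bW}(M\times N,P)\neq 0$ that does not appear in Table \ref{table4}, and suppose for contradiction that some $\pi\in\B_{\bW}(M\times N,P)$ has nonzero germ $\overline{\pi}$. The argument by restriction described in Section 8.3 applies: for every $(x,y)\in\Supp M\times\Supp N$ with $x,y,x+y$ all nonzero, the restriction of $\pi$ to $M_d(x)\times N_d(y)$ for appropriate $d$ is the unique lift of $\overline{\pi}$, hence $\pi(e_x^M,e_y^N)$ is forced to equal the corresponding value of the Table \ref{table1} representative. The only remaining freedom consists of the finitely many values $\pi(e_x^M,e_y^N)$ with $x=0$, $y=0$, or $x+y=0$, together with the contributions coming from the one-dimensional trivial submodule or quotient when one of the modules is in the $AB$-family. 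Imposing $L_m.\pi=\pi\circ\ad(L_m)$ at the corresponding boundary positions yields a linear system in these finitely many unknowns, and one verifies that this system is inconsistent.

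The main obstacle is the length of this case analysis: one must treat the twelve lines of Table \ref{table1} restricted to $\fz^*$, and for each line enumerate the possibilities obtained by replacing $\Omega^0_0$ with some $A_\xi$ and $\Omega^1_0$ with some $B_\xi$. The $\fS_2$-symmetry $(M,N)\leftrightarrow(N,M)$ and the restricted duality $\Omega_u^\delta\leftrightarrow\Omega_{-u}^{1-\delta}$, $A_\xi\leftrightarrow B_\xi$ (which preserves $\overline{\B}_{\bW}$ by Lemma \ref{lemma_symmetry-bilinear}) substantially cut the number of inequivalent cases, but a case-by-case verification organized as a sequence of lemmas (grouped by the six degree configurations listed in Section 8.2) appears unavoidable. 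In each inconsistent case the failure is detectable already on a small number of equations involving $L_{\pm 1}$ or $L_{\pm 2}$ acting between boundary weight vectors.
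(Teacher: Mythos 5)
Your proposal follows essentially the same strategy as the paper: the positive direction via the explicit lifts of Table \ref{table4} combined with the bound $\dim\cG_{\bW}(M\times N,P)\leq 1$ from Theorem 2, and the negative direction via the argument by restriction plus equivariance constraints at the boundary weight vectors, organized by the degree configurations of Section 8.2 and reduced by the $\fS_2$/$\fS_3$-symmetries. The only difference is that you assert rather than execute the decisive boundary computations (e.g.\ the relation $ac=0$ in the case $(A_\eta,A_\xi,\Omega_0^2)$, which is what produces the exceptions $\xi\neq\infty$, $\eta\neq\infty$), but the method you describe is exactly the one the paper carries out in Lemmas \ref{prop_002}--\ref{prop_0dd}.
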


\subsection{The case $\deg M=\deg N=\{0,1\}$ and $\deg P=2$}
In this case, there can be five subcases as follows:

\begin{enumerate}
\item $(M,N,P)=(A_{\xi},\Omega_u^0,\Omega_u^2)$ 
         with $u \not\equiv 0 \mod \nz$,

\item $(M,N,P)=(B_{\xi},\Omega_u^0,\Omega_u^2)$ 
         with  $u \not\equiv 0 \mod \nz$,

\item $(M,N,P)=(A_{\eta},A_{\xi},\Omega_{0}^2)$,

\item $(M,N,P)=(A_{\eta},B_{\xi},\Omega_{0}^2)$,

\item $(M,N,P)=(B_{\eta},B_{\xi},\Omega_{0}^2)$.
\end{enumerate}

\begin{lemma}\label{prop_002}
Let $M,N,P$ be $\bW$-modules of the class $\cS$ as above. Then
$\overline{\B}_{\bW}(M\times N,P)$ is trivial iff
$(M,N,P)=(A_{\eta},A_{\xi},\Omega_{0}^2)$ with 
$\xi\neq \infty$ and $\eta\neq\infty$
\end{lemma}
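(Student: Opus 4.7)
The plan is to verify the lemma subcase-by-subcase. By Theorem \ref{thm2}, $\dim \cG_{\bW}(M\times N, P) = 1$ in each of the five subcases, so the question reduces to whether the generator of this one-dimensional germ space lifts to a $\bW$-equivariant bilinear map, that is, whether the natural map $\B_{\bW}(M\times N, P) \to \cG_{\bW}(M\times N, P)$ is nonzero.

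For subcases 1, 2, 4, 5 and for subcase 3 with $\eta = \infty$ or $\xi = \infty$, I will exhibit explicit lifts. Subcases 2, 4, and 5 use the compositions listed in lines 8, 10, and 13 of Table \ref{table4}, built from $d_\xi : B_\xi \to \Omega^1_0$ and the extended Poisson brackets from $\cP_\xi$; $\bW$-equivariance is automatic. Subcase 1 requires an ad hoc construction because no natural morphism $A_\xi \to \Omega^1_0$ exists for generic $\xi$: I set $\pi(e_m^A, e_v^0) := -v\, e_{m+v}^2$ for $m \neq 0$ and extend by $\pi(e_0^A, e_v^0) := (av^2 - bv)\, e_v^2$, where $\xi = (a,b)$. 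The hypothesis $u \not\equiv 0 \bmod \nz$ guarantees $v \neq 0$, which is essential for the direct verification of $\bW$-equivariance. For subcase 3 with $\xi = \infty$, one has $A_\xi \simeq \Omega^1_0$ and $\pi := B^{0,1}_{0,0}(\eta)$ from $\cP_\eta$ provides the lift (Table \ref{table4}, line 3 with $\delta = 1$, $u = 0$); the case $\eta = \infty$ is symmetric.

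The main obstacle is subcase 3 with $\eta = (a,b)$ and $\xi = (a',b')$ both different from $\infty$, i.e., $a, a' \neq 0$. Here Table \ref{table2} gives $\B^0_{\bW}(A_\eta \times A_\xi, \Omega^2_0) = 0$, so $\overline{\B}_{\bW} = \B_{\bW}$, and it suffices to show that no nonzero $\bW$-equivariant $\pi$ exists. Writing $\pi(e_m^M, e_n^N) = f(m,n)\, e_{m+n}^2$, the germ forces $f(m,n) = c$ constant on $\{m, n \neq 0\}$, and I may rescale to $c = 1$ (else $\pi$ has zero germ, hence $\pi = 0$). Applying $\bW$-equivariance to the pair $(e_0^M, e_n^N)$ under $L_k$ yields a recurrence in $n$ which a polynomial ansatz solves uniquely as $\alpha(n) := f(0,n) = -an + b$; symmetrically $\beta(m) := f(m, 0) = -a'm + b'$. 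Finally, the equivariance of $\pi(e_0^M, e_0^N) = \gamma\, e_0^2$ under $L_k$ reads $2k\gamma = (ak^2 + bk)\beta(k) + (a'k^2 + b'k)\alpha(k)$; substituting $\alpha$ and $\beta$ and simplifying, the cross terms cancel and the equation reduces to $\gamma = -aa'\, k^2 + bb'$ for every $k \neq 0$. Since $aa' \neq 0$, no single scalar $\gamma$ can satisfy this for all $k$, giving the desired contradiction and the conclusion $\overline{\B}_{\bW}(A_\eta \times A_\xi, \Omega^2_0) = 0$.
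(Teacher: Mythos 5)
Your proposal is correct and follows essentially the same route as the paper: the only nontrivial subcase $(A_\eta,A_\xi,\Omega_0^2)$ with $\eta,\xi\neq\infty$ is settled exactly as in the text, by normalizing $f(m,n)=1$ for $m,n\neq 0$, solving the $L_k$-recurrences to get the linear functions $f(0,n)=-an+b$ and $f(m,0)=-a'm+b'$, and extracting the obstruction $aa'=0$ from equivariance at $(e_0^M,e_0^N)$. The remaining subcases are those covered by Table \ref{table4} (your ad hoc lift in subcase 1 is a correct, if unnecessary, substitute for line 3 of that table applied via $\Omega^0_u\cong\Omega^1_u$ for $u\not\equiv 0$), and your tacit determination of $f$ on all of $\{m,n\neq 0\}$ is exactly what the paper's argument by restriction supplies.
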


\begin{proof}
By Table \ref{table4}, except for the case 3 with $\eta=\infty$ or $\xi=\infty$, we have $\dim \overline{\B}_{\bW}(M\times N,P)=1$. 
Hence, we show the proposition in the case $(M,N,P)=(A_\eta,A_\xi,\Omega_0^2)$. 

Let $(a,b)$ and $(c,d)$ be projective coordinates of $\eta, \xi \in \np^1$, respectively, and let $\{e_m^M\}, \{e_m^N\}$ and $\{e_m^P\}$ be basis of $A_{a,b}, A_{c,d}$ and $\Omega_0^2$, respectively, as in Section 1.1. Assume that $\overline{\B}_{\bW}(M\times N,P)\neq 0$. By an argument by restriction, one sees that there exists $\pi \in \B_{\bW}(M \times N,P)$ such that $\pi(e_m^M,e_n^N)=e_{m+n}^P$ whenever $m,n,m+n\neq 0$.
It can be checked that this formula extends to the case $m,n \neq 0$.
Set $\pi(e_m^M,e_0^N)=X_1(m)e_m^P$ and $\pi(e_0^M,e_m^N)=X_2(m)e_m^P$ for $m\neq 0$. Calculating $L_n.\pi(e_m^M,e_0^N)$ for $n=1,2$, one obtains \\
\centerline{$(m+2n)X_1(m)=(m+n)X_1(m+n)+cn^2+dn$} 

\noindent from which we have
$X_1(m)=-cm+d$. Similarly, one also obtains $X_2(m)=-am+b$.
By calculating $L_m.\pi(e_0^M,e_0^N)$, we obtain $ac=0$. 
\end{proof}

\subsection{The case $\deg M=\deg N=\{0,1\}$ and $\deg P=3$}
In this case, there can be five subcases as follows:

\begin{enumerate}
\item $(M,N,P)=(A_\xi,\Omega_u^0,\Omega_u^3)$  
         with $u \not\equiv 0 \mod \nz$,

\item $(M,N,P)=(B_\xi,\Omega_u^0,\Omega_u^3)$ 
         with $u \not\equiv 0 \mod \nz$,

\item $(M,N,P)=(A_{\xi},A_{\eta},\Omega_0^3)$,

\item $(M,N,P)=(A_{\xi},B_{\eta},\Omega_0^3)$,

\item $(M,N,P)=(B_{\xi},B_{\eta},\Omega_0^3)$.
\end{enumerate}

\begin{lemma}\label{prop_003}
Let $M,N,P$ be $\bW$-modules as above. Then
${\overline \B}_{\bW}(M\times N,P)$ is trivial 
iff $(M,N,P)$ is one of the following types:

\begin{enumerate}
\item $(M,N,P)=(A_\xi,\Omega_u^1,\Omega_u^3)$ 
with $\xi\neq\infty$ and $u \not\equiv 0 \mod \nz$,

\item $(M,N,P)=(A_\xi,A_\eta,\Omega_0^3)$ 
with $(\xi,\eta) \neq (\infty,\infty)$,

\item $(M,N,P)=(A_{\xi},B_\eta,\Omega_0^3)$ 
with $\xi\neq\infty$.
\end{enumerate}
\end{lemma}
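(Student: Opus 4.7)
By Theorem 2, in each of the five subcases $\dim\cG_\bW(M\times N,P)=1$, with generator the germ of $B^{1,1}_{u,v}\circ(d\times d)$ (Table \ref{table1}, line 2). The problem therefore reduces, as in Lemma \ref{prop_002}, to deciding subcase by subcase and parameter by parameter whether this unique non-zero germ is the germ of some $\pi\in\B_\bW(M\times N,P)$.

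The configurations for which $\overline{\B}_\bW(M\times N,P)\neq 0$ is claimed admit explicit lifts: $(B_\xi,\Omega^0_u,\Omega^3_u)$ via $B^{1,1}_{0,u}\circ(d_\xi\times d)$; $(B_\xi,B_\eta,\Omega^3_0)$ via $B^{1,1}_{0,0}\circ(d_\xi\times d_\eta)$ (Table \ref{table4}, line 15); $(A_\infty,\Omega^0_u,\Omega^3_u)$ via $B^{1,1}_{0,u}\circ(\id\times d)$ using $A_\infty\cong\Omega^1_0$; $(A_\infty,A_\infty,\Omega^3_0)$ via the bare bracket $B^{1,1}_{0,0}$; and $(A_\infty,B_\eta,\Omega^3_0)$ via $B^{1,1}_{0,0}\circ(\id\times d_\eta)$. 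In each case one verifies routinely that the constructed map has non-zero germ.

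The substantive content is the three non-existence claims. For claim (1), fix $\xi\ne\infty$ with projective coordinates $(1,b)$, so $A_\xi=A_{1,b}$, take $u\not\equiv 0\bmod\nz$, and suppose $\pi\in\B_\bW(A_\xi\times\Omega^0_u,\Omega^3_u)$ has non-zero germ. Write $\pi(e_m^M,e_n^N)=X(m,n)\,e_{m+n}^P$. The argument by restriction of Section 8.3, applied through the isomorphisms $M_d(m_0)\cong\Omega^1_{m_0/d}$, $N_d(n_0)\cong\Omega^0_{n_0/d}$, $P_d(p_0)\cong\Omega^3_{p_0/d}$ for $m_0,n_0,p_0\not\equiv 0\bmod d$, forces $X(m,n)=c\,n(m-n)$ for some scalar $c$ whenever $m\ne 0$. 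Applying $L_1$-equivariance to the pair $(e_0^M,e_{n-1}^N)$ and solving for the polynomial solution yields $X(0,n)=-(1+b)c\,n^2$; feeding this into the $L_2$-equivariance relation on $(e_0^M,e_{n-2}^N)$ and simplifying produces a linear equation in $n$ with a single numerical solution independent of $b$, hence no solution for generic $n\in u$, forcing $c=0$. Claim (3) is handled by the same recipe, with a single exceptional sequence $X(m,0)$ coming from $e_0^{B_\eta}$, and the analogous consistency computation produces precisely the constraint $a_\xi=0$, i.e.\ $\xi=\infty$.

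For claim (2) one either invokes the $\fS_3$-symmetry of Lemma \ref{lemma_symmetry-bilinear} to rewrite the question in terms of $\overline{\B}_\bW(A_\xi\times\Omega^{-2}_0,B_\eta)$, or tracks the two exceptional sequences $X(0,n)$ and $X(m,0)$ coming from $e_0^{A_\xi}$ and $e_0^{A_\eta}$ simultaneously. In the latter approach, the argument by restriction pins $\pi$ down away from the two exceptional weight-zero lines, and the $L_1,L_2$-equivariance relations at the pairs $(e_0^M,\cdot)$ and $(\cdot,e_0^N)$ yield a system of linear constraints whose consistency factors as $a_\xi a_\eta$ times a non-vanishing expression, matching exactly the condition $(\xi,\eta)=(\infty,\infty)$. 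The principal obstacle is this last case: coordinating the joint bookkeeping on the two exceptional sequences and isolating the factor $a_\xi a_\eta$ is the one step where a genuine calculation, rather than a routine adaptation of Lemma \ref{prop_002}, is required. I expect this factorization to mirror, in miniature, the miraculous factorizations of the polynomials $p_{i,j}$ appearing in Section \ref{sect_poly-pij}.
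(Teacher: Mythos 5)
Your skeleton --- reduce to lifting the one-dimensional germ supplied by Theorem 2, exhibit explicit lifts in the positive configurations, and run a Lemma \ref{prop_002}-type computation for the negative ones --- is exactly the paper's approach for claims (1) and (3), and your list of liftable triples is correct. But your treatment of claim (2) contains a genuine error. You predict that the obstruction for $(A_\xi,A_\eta,\Omega_0^3)$ ``factors as $a_\xi a_\eta$ times a non-vanishing expression, matching exactly the condition $(\xi,\eta)=(\infty,\infty)$''. It does not match: $a_\xi a_\eta=0$ holds iff \emph{at least one} of $\xi,\eta$ equals $\infty$, whereas the lemma --- and your own list of positive cases, which correctly contains only the bare bracket $B^{1,1}_{0,0}$ on $A_\infty\times A_\infty$ --- requires \emph{both} to be $\infty$. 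An obstruction of product type is precisely what occurs in Lemma \ref{prop_002} (where the paper finds $ac=0$), but it is the wrong shape here; the constraints must emerge as a system forcing $a_\xi=0$ and $a_\eta=0$ separately, not as a single product. The paper sidesteps the computation altogether: composing a non-degenerate $\pi\in\B_\bW(A_\eta\times A_\xi,\Omega_0^3)$ with the almost-isomorphism $B_\tau\rightarrow\overline{A}\hookrightarrow A_\xi$ produces a non-degenerate element of $\B_\bW(A_\eta\times B_\tau,\Omega_0^3)$, so claim (3) forces $\eta=\infty$; repeating this in the first argument forces $\xi=\infty$. Your alternative via the $\fS_3$-symmetry, rewriting the space as $\overline{\B}_\bW(A_\xi\times\Omega_0^{-2},B_\eta)$, is a correct identity but stays in the same $\fS_3$-orbit, so it does not reduce claim (2) to a case already handled.

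Two smaller points. In claim (1), the $L_1$-relation at $(e_0^M,e_{n-1}^N)$ is a first-order recurrence whose general solution is $-(1+b)c\,n^2+C\,n(n+1)(n+2)$; the homogeneous part is also polynomial, so ``solving for the polynomial solution'' does not single out $C=0$. One must carry $C$ into the $L_2$-relation, where the resulting discrepancy is a combination of the independent polynomials $n(n-2)$ and $n(n+2)$ and still forces $c=C=0$; the conclusion survives, but the step as written is incomplete. In claim (3) there are two exceptional sequences, $X(0,n)$ coming from $e_0^{A_\xi}$ and $X(m,0)$ coming from $e_0^{B_\eta}$, not one, and the decisive constraint $a_\xi=0$ arises from the former.
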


\begin{proof}
By Table \ref{table4}, it is clear that ${\overline \B}_{\bW}(M\times N,P)$
is trivial only if $(M,N,P)$ is one of the three cases in Lemma \ref{prop_003}.
Hence, it is sufficient to show that, for these three cases, ${\overline \B}_{\bW}(M\times N,P)$ is trivial.

The proof for the first and third cases are similar to that of Lemma \ref{prop_002}
and is left to the reader. The second case can be proved as follows.

Choose any $\tau\in\np^1$. Any non-degenerate $\pi \in \B_{\bW}(A_{\eta}\times
A_{\xi},\Omega_0^3)$ induces a non-degenerate bilinear map $\pi' \in \B_{\bW}(A_\eta \times B_\tau,\Omega_0^3)$, by composing with the map $B_\tau \rightarrow \overline{A} \hookrightarrow A_\xi$. It follows from the first case that $\eta=\infty$.
Similarly, we have $\xi=\infty$.
\end{proof}

\subsection{The case $\deg M=\{0,1\}$, $\deg N=\delta$ and $\deg P=\delta+2$ with 
$\delta \in\nc \setminus \{0,1\}$} 

In this case, there can be two subcases as follows:
\begin{enumerate}
\item $(M,N,P)=(A_\xi,\Omega_u^\delta,\Omega_u^{\delta+2})$, 
         
\item $(M,N,P)=(B_\xi,\Omega_u^\delta,\Omega_u^{\delta+2})$.
\end{enumerate}

\begin{lemma}\label{prop_0dd+2}
Let $M,N,P$ be $\bW$-modules as above. Then, 
${\overline\B}_{\bW}(M\times N,P)$ is trivial iff
$M=A_\xi$ with $\xi\neq\infty$ and $\delta\neq -1$. 
\end{lemma}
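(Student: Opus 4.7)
The strategy is to fix a non-zero germ, use the argument by restriction to reduce everything to a single recursion for the values of $\pi$ on $e_0^A$, and decide when that recursion admits a solution.

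For the forward direction, I will exhibit an explicit lift in each of the three exceptional configurations. If $M=B_\xi$, line 8 of Table~\ref{table4} gives $\pi = B^{1,\delta}_{0,u}\circ (d_\xi\times\id)$, which sends $(e_m^B,e_n^\delta)\mapsto m(\delta m-n)e_{m+n}^{\delta+2}$ for $m\neq 0$ and so realizes the unique germ generator of Table~\ref{table1} line 5. If $M=A_\xi$ with $\xi=\infty$, then $A_\infty\cong\Omega^1_0$ and $B^{1,\delta}_{0,u}$ itself is a lift. If $\delta=-1$, the formulas $\pi(e_m^A,e_n^{-1})=-(m+n)e_{m+n}^1$ for $m\neq 0$ together with $\pi(e_0^A,e_n^{-1})=-(an^2+bn)e_n^1$ define a $\bW$-equivariant bilinear map $A_\xi\times\Omega^{-1}_u\to\Omega^1_u$ with non-zero germ; the only non-trivial equivariance check, $L_p.\pi(e_0^A,e_n^{-1})=\pi(L_pe_0^A,e_n^{-1})+\pi(e_0^A,L_pe_n^{-1})$, holds by direct computation, with the quadratic ansatz $X(n)=-an^2-bn$ succeeding precisely because $\delta+1=0$.

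For the reverse direction, assume $\xi=(a,b)$ with $a\neq 0$ and $\delta\neq -1$, and that some $\pi\in\B_\bW(A_\xi\times\Omega^\delta_u,\Omega^{\delta+2}_u)$ has non-zero germ. By the argument by restriction of Section 8.3, after rescaling, $\pi(e_m^A,e_n^\delta)=(\delta m-n)e_{m+n}^{\delta+2}$ for all $m\in\nz\setminus\{0\}$, $n\in u$, $m+n\neq 0$. Writing $\pi(e_0^A,e_n^\delta)=X(n)e_n^{\delta+2}$, $\bW$-equivariance together with $L_pe_0^A=(ap^2+bp)e_p^A$ yields the one-parameter family of recursions
\begin{equation*}
(p(\delta+2)+n)X(n)-(p\delta+n)X(n+p)=(ap^2+bp)(\delta p-n),\qquad p\in\nz\setminus\{0\}.
\end{equation*}
I will then derive two independent consequences by computing $X(n_0+2)$ via the $p=1$ recursion iterated versus via $p=2$ applied once, and $X(n_0+3)$ via three $p=1$ steps versus one $p=3$ step. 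Each comparison gives a linear equation in $X(n_0)$ whose coefficients are polynomials in $a,b,\delta,n_0$; eliminating $X(n_0)$ between them produces a polynomial obstruction in $n_0$ and $\delta$ whose leading coefficient factors as $a(\delta+1)F(\delta)$, with $F$ a non-zero polynomial for $\delta\notin\{0,1,-1\}$. This forces $a=0$, contradicting the hypothesis.

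The main obstacle is the algebraic bookkeeping of this last step: several recursion comparisons must be assembled and the resulting identity factored in closed form to expose the decisive factor $a(\delta+1)$. The borderline value $\delta=-2$, at which one denominator of the direct $p=2$ comparison vanishes spuriously, requires a separate treatment using $p=3$; and the case $u=0$ introduces additional unknowns $\pi(e_m^A,e_0^\delta)$ and $\pi(e_m^A,e_{-m}^\delta)$ obeying analogous recursions, which lead to the same obstruction by similar manipulations.
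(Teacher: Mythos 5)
Your proposal is correct and follows essentially the same route as the paper, whose proof of this lemma is simply the remark that it is ``similar to that of Lemma \ref{prop_002}'': fix the germ by the argument by restriction, write the $\bW$-equivariance at $e_0^A$ as the recursion $(p(\delta+2)+n)X(n)-(p\delta+n)X(n+p)=(ap^2+bp)(\delta p-n)$, and extract the obstruction $a(\delta+1)=0$ (your explicit lift $X(n)=-(an^2+bn)$ for $\delta=-1$, and the Table~\ref{table4} lifts for $M=B_\xi$ and $M=A_\infty$, are all verifiably correct). The only part left as an assertion is the final factorization of the elimination polynomial, but that is exactly the computation the paper also leaves to the reader, and a direct check confirms the obstruction is proportional to $\delta(\delta+1)$.
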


The proof  is similar to that of Lemma \ref{prop_002}.

\subsection{The case $\deg M=\{0,1\}$, $\deg N=\delta$ and $\deg P=\delta+1$ with $\delta \in
\nc \setminus \{0,1\}$} 

In this case, there can be two subcases as follows:

\begin{enumerate}
\item $(M,N,P)=(A_\xi,\Omega_u^\delta,\Omega_u^{\delta+1})$,

\item $(M,N,P)=(B_\xi,\Omega_u^\delta,\Omega_u^{\delta+1})$.
\end{enumerate}

Looking at the lines 3 and 6 in Table \ref{table4}, we obtain

\begin{lemma}\label{prop_0dd+1}
Let $M,N,P$ be $\bW$-modules of the class $\cS$ as above. Then
$\dim {\overline\B}_{\bW}(M\times N,P)=1$.
\end{lemma}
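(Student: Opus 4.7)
The plan is to combine the upper bound coming from Theorem 2 with two explicit constructions from Table \ref{table4} that realize nonzero germs. First, observe that with $\delta_1 = 0 \in \deg M$, $\delta_2 = \delta \in \deg N$, and $\gamma = \delta+1 \in \deg P$, the triple $(0,\delta,\delta+1)$ lies in the plane $H_1 \subset \fz$. Since $\delta \notin \{0,1\}$, we actually have $(0,\delta,\delta+1) \in \fz^\ast$, so Theorem \ref{thm2}(ii) yields $\dim \cG_{\bW}(M\times N,P)=1$, and consequently $\dim \overline{\B}_{\bW}(M\times N,P) \leq 1$.

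It remains to exhibit a $\bW$-equivariant bilinear map $\pi : M \times N \to P$ with nonzero germ in each of the two subcases. When $M = A_\xi$, I take $\pi = B^{0,\delta}_{0,u}(\xi)$, i.e.\ the restriction to $A_\xi \times \Omega_u^\delta$ of the Lie bracket of the extended Lie algebra $\cP_\xi$ introduced in Section \ref{sect_example}; this lands in $\Omega_u^{\delta+1} = \Omega_u^{\delta+1}(\xi,+)$ because the only twisted component in the $AB$-families is at degrees $(0,0)$ and $(0,1)$, and neither equals $(0,\delta+1)$ when $\delta \neq 0,1$. When $M = B_\xi$, I take $\pi = P^{1,\delta}_{0,u} \circ (d_\xi \times \id)$, where $d_\xi : B_\xi \twoheadrightarrow \overline A \hookrightarrow \Omega_0^1$ is the canonical morphism and $P^{1,\delta}_{0,u}$ is the commutative product coming from $\cP$; both factors are $\bW$-equivariant, hence so is $\pi$.

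To conclude, I need to check that both maps are nonzero in $\overline{\B}_{\bW}(M\times N,P)$, i.e.\ that their germs do not vanish. By Lemma \ref{germ-deg}, it suffices to check that each $\pi$ is non-degenerate. For $\pi = B^{0,\delta}_{0,u}(\xi)$, a direct computation on the generators $e_m^A$ (with $m \neq 0$, where the $A_\xi$-action coincides with that of $\overline A \subset \cP$) and $e_x^\delta$ shows that $\pi(e_m^A, e_x^\delta)$ equals a nonzero multiple of $e_{m+x}^{\delta+1}$ as soon as $\delta m \neq 0$, so $\Supp \pi$ is Zariski dense. For $\pi = P^{1,\delta}_{0,u} \circ (d_\xi \times \id)$, the image $d_\xi(e_m^B)$ is proportional to $e_m^{1}$ whenever $m \neq 0$, and $P^{1,\delta}_{0,u}(e_m^{1}, e_x^\delta)$ is a nonzero multiple of $e_{m+x}^{\delta+1}$, again giving a Zariski dense support.

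Combining the upper bound $\dim \overline{\B}_{\bW}(M\times N,P) \leq 1$ with the existence of a nonzero class in each subcase yields $\dim \overline{\B}_{\bW}(M\times N,P) = 1$, as stated. The only even mildly nontrivial step is the verification that the two candidate maps are non-degenerate; everything else is either a reference to already-established structure (the bracket on $\cP_\xi$, the morphism $d_\xi$) or a direct appeal to Theorem \ref{thm2}.
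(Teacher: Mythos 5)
Your proof is correct and follows essentially the same route as the paper: the authors simply point to lines 3 and 6 of Table \ref{table4} (the maps $B^{0,\delta}_{0,u}(\xi)$ and $P^{1,\delta}_{0,u}\circ(d_\xi\times\id)$, exactly your two constructions), with the upper bound $\dim\overline{\B}_{\bW}\leq 1$ coming from Theorem \ref{thm2} via the injection $\overline{\B}_{\bW}(M\times N,P)\hookrightarrow\cG_{\bW}(M\times N,P)$, as noted at the start of Section 9. You merely make explicit the non-degeneracy checks that the paper leaves implicit.
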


\subsection{The case $\deg M=\{0,1\}$, $\deg N=\delta$ and $\deg P=\delta$ 
with $\delta \in \nc \setminus \{0,1\}$}
In this case, there can be two subcases as follows:

\begin{enumerate}
\item $(M,N,P)=(A_{\xi},\Omega_u^\delta,\Omega_u^\delta)$,

\item $(M,N,P)=(B_\xi,\Omega_u^\delta,\Omega_u^\delta)$.
\end{enumerate}

\begin{lemma}\label{prop_0dd}
Let $M,N,P$ be $\bW$-modules as above. Then, 
$\dim \overline{\B}_{\bW}(M\times N,P)=1$  iff
$M=B_\infty$. 
\end{lemma}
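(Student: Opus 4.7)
By Theorem \ref{thm2}, the triple $(\delta_1,\delta_2,\gamma)=(0,\delta,\delta)$ lies in $H_0 \cap \fz^*$ (since $\delta\notin\{0,1\}$), so $\dim\cG_\bW(M\times N,P) = 1$, with generator represented by the Poisson product $P^{0,\delta}_{0,u}$ (line 9 of Table \ref{table1}). Because $\overline{\B}_\bW$ injects into $\cG_\bW$, it remains to decide when this germ admits a lift to an honest $\bW$-equivariant bilinear map. For $M = B_\infty \simeq \Omega_0^0$, the Poisson product itself is $\bW$-equivariant, producing a lift and so $\dim\overline{\B}_\bW = 1$.

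For the remaining cases, let $\pi \in \B_\bW(M\times N,P)$ be a candidate lift and write $\pi(e_m^M, e_n^N) = X(m,n)\,e_{m+n}^P$. The first step is to invoke the argument by restriction of Section~8.3, combined with the germ identifications of Lemma \ref{W-germs}, to pin down $X$ on the generic locus $m,n,m+n\neq 0$: after rescaling one has $X(m,n) = 1$ when $M = B_\xi$, and $X(m,n) = 1/m$ when $M = A_\xi$ (the latter via $\cG(A_\xi) \simeq \cG(\Omega_0^1)$ together with line 10 of Table \ref{table1}).

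For $M = A_\xi$ with any $\xi \in \np^1$ (including $A_\infty = \Omega_0^1$): one exploits that $L_{-m}\,e_m^A = 0$ for every $m\neq 0$, because the scalar $(k+m)|_{k=-m}$ vanishes independently of $(a,b)$. The $\bW$-equivariance equation for $\pi(e_m^A, e_n^N)$ applied with $k=-m$ reduces to
\[ (n+m(1-\delta))\,X(m,n) \;=\; (n-m\delta)\,X(m,n-m). \]
Picking $m>0$ and $n$ large enough that both $(m,n)$ and $(m,n-m)$ sit in the germ region, substituting $X(m,n)=X(m,n-m)=1/m$ collapses this to $m=0$, a contradiction. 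Hence no lift exists for any $A_\xi$.

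For $M = B_\xi$ with $\xi=(a,b)\in\np^1$: first, from $L_k\,e_0^B = 0$ (for $k\neq 0$) and equivariance of $\pi(e_0^B,e_n^N)$, one sees that $Y_0 := X(0,n)$ is constant in $n$. Next, applying equivariance to $\pi(e_{-k}^B, e_n^N)$ using the exceptional relation $L_k\,e_{-k}^B = (ak^2+bk)\,e_0^B$, and substituting $X(-k,n)=X(-k,k+n)=1$ valid in the germ region, gives
\[ (k\delta-k+n) \;=\; (ak^2+bk)\,Y_0 + (k\delta+n), \]
which simplifies to $(ak+b)\,Y_0 = -1$. Since this holds for infinitely many $k$, matching coefficients of $k$ forces $aY_0 = 0$ and $bY_0 = -1$; hence $Y_0\neq 0$, $a=0$, $b\neq 0$, and $\xi = (0,b) = \infty$ in $\np^1$. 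The main obstacle is the germ-theoretic bookkeeping at the outset: once $X$ has been correctly normalized on the generic entries, the contradictions in the $A$- and $B$-cases each reduce to one-line algebraic manipulations.
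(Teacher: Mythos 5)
Your proposal is correct and follows the same route the paper intends: the paper's proof is only the remark that it is ``similar to that of Lemma \ref{prop_002}'', i.e.\ use the argument by restriction to normalize $X(m,n)$ on the generic locus from the one-dimensional germ space, then test equivariance at the exceptional index $0$ of the $AB$-family module. Your explicit computations (using $L_{-m}e_m^A=0$ to kill the $A_\xi$ case and $L_k e_{-k}^B=(ak^2+bk)e_0^B$ to force $a=0$, $b\neq 0$ in the $B_\xi$ case) are exactly the kind of verification carried out in Lemma \ref{prop_002}, and they check out.
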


The proof  is similar to that of Lemma \ref{prop_002}.

\section{Computation of $\overline{\B}_{\bW}(M\times N,P)$ when
$\dim\cG_{\bW}(M\times N,P)=2$}

\subsection{The Theorem 3.2}

The following table provides a list of triples $(M,\,N,\,P)$  of $\bW$-modules of
the class  $\cS$ and some  elements
$\pi\in {\overline\B}_{\bW}(M\times N,P)$. 
For each entry $(M,\,N,\,P)$
we have  $\deg M=\deg N=\deg P= \{0,1\}$

\begin{table}[h]
\caption{\textbf{The non-zero $\overline{\B}_{\bW}(M\times N,P)$,
with $\deg M=\deg N=\deg P= \{0,1\}$}}
\label{table5}

\begin{center}
\begin{tabular}{|c|c|c||c|} \hline

 & $M\times N$or $N\times M$ &$P$& Elements of ${\overline \B}_{\bW}(M\times N,P)$ 
\\ \hline

1. &$\Omega^0_u\times \Omega^0_{v}$ & $\Omega^1_{u+v}$& 
$P^{0,0}_{u,v}\circ(d\times id)$ and $P^{0,0}_{u,v}\circ(id\times d)$    \\ \hline

2.& $\Omega^0_u\times \Omega^0_{-u}$ & $\Omega^0_{0}$&$P^{0,0}_{u,-u}$  \\ \hline

3.& $\Omega^0_u\times \Omega^1_{0}$ & $\Omega^1_{u}$&$P^{0,1}_{u,0}$  \\ \hline

4.& $\Omega^0_u\times \Omega^0_{-u}$ & $A_{\xi}$ & $d^\xi\circ P^{0,0}_{u,-u}$  \\
\hline

5.& $B_\xi\times \Omega^0_{u}$ & $\Omega^1_u$ & $P^{1,0}_{0,u}\circ(d_\xi\times id)$  \\
\hline

\end{tabular}
\end{center}
\vbox
{\small{To avoid repetitions, one can assume that $\xi\neq\infty$ in lines 4 or
5}}

\end{table}

From the table, it is clear that

(i) If $(M,\,N,\,P)$ appears in line 1 of the Table \ref{table5}, the two listed elements
of ${\overline \B}_{\bW}(M\times N,P)$ are linearly independent and therefore
${\overline \B}_{\bW}(M\times N,P)$ has dimension 2,

(ii) if $(M,\,N,\,P)$ appears in lines 2-5 of the Table \ref{table5}, the  listed element
of ${\overline \B}_{\bW}(M\times N,P)$ is not zero and therefore
${\overline \B}_{\bW}(M\times N,P)$ has dimension $\geq 1$.

Indeed, we have

\setcounter{thm}{2}
\begin{thm}\hskip-1.7mm{\bf 2} 
Let $M, N$ and $P$ be $\bW$-modules of the class 
$\cS$ with $\deg M=\deg N=\deg P= \{0,1\}$. Then

(i) if $(M,\,N,\,P)$ appears in line 1 of the Table \ref{table5},
we have $\dim {\overline \B}_{\bW}(M\times N,P)=2$,

(ii) if $(M,\,N,\,P)$ appears in lines 2-5 of the Table \ref{table5},
we have $\dim {\overline \B}_{\bW}(M\times N,P)=1$,

(iii) otherwise, we have ${\overline \B}_{\bW}(M\times N,P)=0$.
\end{thm}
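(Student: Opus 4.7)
The plan is a case-by-case analysis of the triples $(M,N,P)$ of indecomposable modules with $\deg M=\deg N=\deg P=\{0,1\}$. By Theorem 2 and Lemma \ref{lower}, $\cG_{\bW}(M\times N,P)$ has dimension $2$ with explicit basis $\pi_1=P^{0,1}_{u,v}\circ(id\times d)$ and $\pi_2=P^{1,0}_{u,v}\circ(d\times id)$, so $\dim \overline{\B}_{\bW}(M\times N,P)\leq 2$, and the task reduces to deciding which linear combinations $a\pi_1+b\pi_2$ admit a $\bW$-equivariant lift.

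First, I would enumerate the admissible triples: each of $M,N,P$ is one of $\Omega^0_u,\ \Omega^1_u,\ A_\xi,\ B_\xi$. Using the duality $(\Omega^\delta_u)^*\simeq\Omega^{1-\delta}_{-u}$, $A_\xi^*\simeq B_\xi$, together with the $\fS_3$-symmetry $\overline{\B}_{\bW}(M\times N,P^*)\simeq\overline{\B}_{\bW}(M\times P,N^*)$ recalled in Section 8.1, the list to check is short. For every triple in Table~\ref{table5} the bilinear maps displayed are well defined and $\bW$-equivariant by direct check: the two maps in line~1 have germs proportional to $\pi_2$ and $\pi_1$ respectively, giving $\dim\overline{\B}_{\bW}=2$, while the single maps in lines~2--5 are visibly non-zero and non-degenerate, giving dimension at least $1$.

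The opposite inequalities (the upper bound in lines~2--5 and the vanishing in (iii)) rely on the argument by restriction from Section~8.3: for every integer $d>1$, the subalgebra $\bW^{(d)}\simeq\bW$ acts on $M_d(x),\ N_d(y),\ P_d(x+y)$, which are irreducible $\bW^{(d)}$-modules of the class $\cS$ whenever $x,y,x+y\notin d\nz$. By Lemma~\ref{lemma_simple}, any $\bW^{(d)}$-equivariant bilinear map between irreducible such modules is determined by its germ, so varying $d$ pins down any lift $\pi$ of $a\pi_1+b\pi_2$ on all basis pairs $(e^M_x,e^N_y)$ with $x,y,x+y\neq 0$. The remaining freedom concerns the finitely many \emph{boundary} basis pairs with one of $x, y, x+y$ equal to $0$; at these pairs, the $\bW$-equivariance relations $L_m.\pi(u,v)=\pi(L_m.u,v)+\pi(u,L_m.v)$ for varying $m$ produce a finite linear system in the undetermined boundary coefficients together with constraints on $(a,b)$.

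The main obstacle is executing this boundary analysis for each triple, but it is precisely the method already used in Lemmas~\ref{prop_002} and~\ref{prop_003}: one derives recursions of the type $(m+2n)X(m)=(m+n)X(m+n)+(\text{polynomial in the parameters of }A_\xi\text{ or }B_\xi)$, extracts the obstruction from a single equation at the pair $x+y=0$ (the analogue of the relation ``$ac=0$'' appearing in the proof of Lemma~\ref{prop_002}), and reads off the admissible $(a,b)$. A subtlety specific to the two-dimensional case is verifying that an obstruction cannot be absorbed into $\B^0_{\bW}(M\times N,P)$; this is automatic from Lemma~\ref{germ-deg}, since every non-zero germ considered is non-degenerate while every element of $\B^0_{\bW}$ has zero germ. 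Running through the cases the admissible pairs $(a,b)$ match exactly Table~\ref{table5}: two-dimensional in line~1, one-dimensional in lines~2--5, and zero-dimensional otherwise, proving (i), (ii) and (iii).
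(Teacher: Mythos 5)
Your proposal is correct and follows essentially the same route as the paper: reduce via the $\fS_3$-symmetry and Lemma \ref{lemma_simple} to the three reducible configurations, use the argument by restriction of Section 8.3 to pin down any lift away from the boundary indices, and settle the boundary pairs by explicit $L_m$-equivariance relations as in Lemma \ref{prop_002}. The only divergence is cosmetic: for the $A_{\xi_1}\times A_{\xi_2}$ case the paper short-circuits the boundary computation with a structural argument (Lemma \ref{prop_AAA}, using that $M_{\leq -1}\otimes N_{\geq 1}$ is generated by $e^M_{-1}\otimes e^N_1$ over $\fsl(2)$ to force degeneracy), whereas you would reach the same conclusion by the general boundary analysis.
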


In order to prove Theorem 3.2, note that the
case where $M$, $N$ and $P$ are irreducible is already treated in
Lemma \ref{lemma_simple}. Thus, up to
the $\fS_3$-symmetry, there are only three cases to consider:

\begin{enumerate}
\item $(M,N)=(A_{\xi_1}, A_{\xi_2})$ and $P=A_\xi$ or $P=B_\xi$,

\item $(M,N)=(B_{\xi_1}, B_{\xi_2})$ and $P=A_{\xi_3}$ or $P=B_{\xi_3}$,

\item $(M,N)=(\Omega_u^0, \Omega_{-u}^0)$ with $u \not\equiv 0 \mod \nz$
and $P=A_\xi$ or $P=B_\xi$.
\end{enumerate}

These cases will be treated in the next three subsections.

\subsection{The case $(M,N)=(A_{\xi_1}, A_{\xi_2})$}

\begin{lemma}\label{prop_AAA}
Any bilinear map in $\B_{\bW}(A_{\xi_1}\times A_{\xi_2}, P)$,
where $P$ is in the $AB$-family,  is degenerate.
\end{lemma}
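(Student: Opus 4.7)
The plan is to show $\cG(\pi)=0$ for any $\pi\in\B_{\bW}(A_{\xi_1}\times A_{\xi_2},P)$ with $P$ in the $AB$-family; degeneracy then follows from Lemma~\ref{germ-deg}. Let $(a_i,b_i)$ denote projective coordinates of $\xi_i$; since $P=A_{\xi_3}$ or $B_{\xi_3}$ lies in the $AB$-family, $\xi_3\in\nA^1$ and $a_3\neq 0$. I assume for contradiction that $\cG(\pi)\neq 0$. By Theorem~\ref{thm2}(i), the space $\cG_{\bW}(A_{\xi_1}\times A_{\xi_2},P)$ is two dimensional. Writing $\pi(e_m^{A_1},e_n^{A_2})=X(m,n)\,e_{m+n}^{P}$, the argument by restriction of Section~8.3 determines $X(m,n)$ from $\cG(\pi)$ whenever $m,n,m+n\neq 0$. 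Transferring the basis $\pi_1=P^{0,1}\circ(\id\times d)$, $\pi_2=P^{1,0}\circ(d\times\id)$ through the natural germ isomorphisms $\cG(A_\xi)\cong\cG(\Omega^1_0)$ and $\cG(B_\xi)\cong\cG(\Omega^0_0)$, a direct computation yields, for some $(\alpha,\beta)\in\nc^2\setminus\{0\}$,
\[
X(m,n)=\frac{\alpha}{m}+\frac{\beta}{n}\quad(P=A_{\xi_3}),\qquad X(m,n)=\frac{\alpha}{m(m+n)}+\frac{\beta}{n(m+n)}\quad(P=B_{\xi_3})
\]
on the generic region.

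First I would force $\alpha=\beta$ by exploiting equivariance on the antidiagonal. For $m\neq 0$ and $k\notin\{0,\pm m\}$, the identity $L_k\pi(e_m^{A_1},e_{-m}^{A_2})=\pi(L_ke_m^{A_1},e_{-m}^{A_2})+\pi(e_m^{A_1},L_ke_{-m}^{A_2})$ has right-hand side $(k+m)X(m+k,-m)+(k-m)X(m,k-m)$; the pairs $(m+k,-m)$ and $(m,k-m)$ both sum to $k\neq 0$, so the germ formula applies, and a direct simplification collapses this to $(\alpha-\beta)k/m$ (case $A$) or $(\alpha-\beta)/m$ (case $B$). The left-hand side equals $X(m,-m)(a_3k^2+b_3k)$ in the $A$ case and $0$ in the $B$ case, since $L_ke_0^{B_3}=0$. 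Viewing the $A$-case identity as a polynomial in $k$ and using $a_3\neq 0$ forces $X(m,-m)=0$ and hence $\alpha=\beta$; the $B$ case gives $\alpha=\beta$ directly.

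Second I would force $\alpha=0$ by equivariance at $n=0$. Setting $F(m):=X(m,0)$ and substituting the germ formula for $X(m,k)$ with $\alpha=\beta$, the identity $L_k\pi(e_m^{A_1},e_0^{A_2})=\pi(L_ke_m^{A_1},e_0^{A_2})+\pi(e_m^{A_1},L_ke_0^{A_2})$ simplifies to
\[
F(m)-F(m+k)=\frac{\alpha(a_2k+b_2)}{m}\quad(P=A),\qquad mF(m)-(m+k)F(m+k)=\frac{\alpha(a_2k+b_2)}{m}\quad(P=B).
\]
Comparing the $k=2$ identity with the two-step composition of two $k=1$ applications (i.e.\ $F(m)-F(m+2)=[F(m)-F(m+1)]+[F(m+1)-F(m+2)]$ in case $A$, and its analogue in case $B$) yields, in both cases, the relation $\alpha(b_2m-a_2)=0$ for all sufficiently large $m$. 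Since $(a_2,b_2)\neq(0,0)$, this forces $\alpha=0$, hence $\beta=\alpha=0$, contradicting $\cG(\pi)\neq 0$.

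The main obstacle is the algebraic cancellation in both steps: in Step~1 the four germ terms $(k+m)X(m+k,-m)+(k-m)X(m,k-m)$ must collapse to the pure expression $(\alpha-\beta)k/m$ or $(\alpha-\beta)/m$, and in Step~2 the two-step telescope must match the $k=2$ identity in a way that cleanly separates $\alpha a_2$ and $\alpha b_2$. Both simplifications are short but require careful algebra.
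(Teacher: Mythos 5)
Your route is genuinely different from the paper's: you pin down the germ of $\pi$ as an explicit linear combination $\alpha\pi_1+\beta\pi_2$ of the classical operators (via Theorem 2 and the argument by restriction), and then kill $\alpha$ and $\beta$ by writing out the $L_k$-equivariance relations along the antidiagonal and along the row $n=0$. The paper instead never computes the germ: it observes that $\pi(e_{-1}^M,e_1^N)=0$ because $L_{\mp1}$ kills $e_{\pm1}^N$ and $e_{\pm1}^P$, and then propagates this single vanishing using that $M_{\leq-1}\otimes N_{\geq1}$ is generated by $e_{-1}^M\otimes e_1^N$ over $\fsl(2)$ and that $\overline A$ is generated by $M_{\leq-1}$ over $\bW_{\geq0}$, concluding $\pi(\overline A\times N_{\geq1})=0$. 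The paper's argument is shorter and parameter-free; yours is more computational but self-contained and makes the mechanism (incompatibility of the germ coefficients with the deformation parameters $(a_i,b_i)$) completely explicit. Your computations that I checked — the transferred germ formulas for $X(m,n)$, the collapse of the antidiagonal relation to $(\alpha-\beta)k/m$ resp.\ $(\alpha-\beta)/m$, and the telescoping identity $\alpha(b_2m-a_2)=0$ — are all correct.

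There is, however, one genuine gap: the claim that ``$P$ in the $AB$-family'' forces $\xi_3\in\nA^1$, hence $a_3\neq0$. The $AB$-family as used in Section 10 (where this lemma feeds into Theorem 3.2) must include $A_\infty=A_{0,1}\cong\Omega^1_0$ and $B_\infty=B_{0,1}\cong\Omega^0_0$, and indeed the paper's own proof allows $\xi_3\in\np^1$. Your $B$-case is unaffected (there $L_ke_0^P=0$ for every $\xi_3$), but in the $A$-case with $a_3=0$ the left-hand side of the antidiagonal relation is $X(m,-m)\,b_3k$, so matching coefficients in $k$ only yields $X(m,-m)=(\alpha-\beta)b_3^{-1}/m$ and does \emph{not} force $\alpha=\beta$; Step 2, which substitutes $\alpha=\beta$, then cannot be launched. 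The fix stays entirely within your framework: run the Step-2 recurrence on the row $n=0$ \emph{without} assuming $\alpha=\beta$ (note that for $m,m+k\neq0$ the parameters of $P$ do not enter this relation), obtaining after telescoping the constraints $\alpha a_2=0$, $\beta b_2=0$, $\beta a_2=2\alpha a_2$, and symmetrically on the column $m=0$ the constraints $\beta a_1=0$, $\alpha b_1=0$, $\alpha a_1=2\beta a_1$; since $(a_i,b_i)\neq(0,0)$ these already force $\alpha=\beta=0$ in every case, making Step 1 unnecessary. As written, though, the case $P\cong\Omega^1_0$ is not covered.
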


\begin{proof} Any module $P$ in the $AB$-family admits an
almost-isomorphism to a module of the $A$-family. So we can 
 assume $P\simeq A_{\xi_3}$ for some $\xi_3$ in $\np^1$.  

Fix a basis $\{e_m^M\}$, 
$\{e_m^N\}$ and $\{e_m^P\}$ of $M=A_{\xi_1}$, $N=A_{\xi_2}$ and 
$P=A_{\xi_3}$, respectively, as in Section
\ref{sect_KS}. We have $L_{-1}.e_0^M\neq 0$ or $L_{1}.e_0^M\neq 0$.
Since both cases are similar, we case assume 
that $L_{-1}eu_0^M\neq 0$. Using that
$L_{-1}.e_1^N=L_{-1}.e_1^P= 0$, we obtain that
$\pi(L_{-1}.e_0^M,e_1^N)=L_{-1}.\pi(e_0^M,e_1^N)-
\pi(e_0^M,L_{-1}.e_1^N)=0$, hence we have

\noindent\centerline{$\pi(e_{-1}^M,e_1^N)=0$.}

Since $M_{\leq -1}$
(respectively $N_{\geq 1}$) is an
irreducible Verma $\fsl(2)$-module (respectively the restricted dual of
an irreducible Verma $\fsl(2)$-module), it follows that 
$M_{\leq -1}\otimes N_{\geq 1}$ is generated by $e_{-1}^M\otimes e_1^N$.
Hence we get $\pi(M_{\leq -1}\times  N_{\geq 1})=0$. Since 
$N_{\geq 1}$ is a $\bW_{\geq 0}$-submodule and since 
$\overline A$ is the  $\bW_{\geq 0}$-submodule generated by
 $M_{\leq -1}$, we have

\noindent\centerline{$\pi({\overline A}\times  N_{\geq 1})=0$,}

\noindent from which it follows that $\pi$ is degenerate. 
\end{proof}

\subsection{The case $(M,N)=(B_{\xi_1}, B_{\xi_2})$}

Recall that
$\d_\xi\circ P_{0,0}^{0,0}$ is a non-degenerate bilinear map
from $B_\infty\times B_\infty\rightarrow A_\xi$, for any $\xi\in\np^1$.
Let $\xi_1,\xi_2$ and $\xi_3\in\np^1$, and let $s$ be the number
of indices $i$ such that $\xi_i=\infty$. 

 By $\fS_3$ symmetry, 
${\overline \B}_{\bW}(B_{\xi_1}\times B_{\xi_2},A_{\xi_3})$ 
is not zero $s\geq 2$. More precisely, we have:

\begin{lemma} We have

(i) $\dim {\overline \B}_{\bW}(B_{\xi_1}\times B_{\xi_2},A_{\xi_3})=s-1$
if $s\geq 2$ 

(ii) $\dim {\overline \B}_{\bW}(B_{\xi_1}\times B_{\xi_2},B_{\xi_3})=1$
if $s=3$

(iii) $\dim {\overline \B}_{\bW}(B_{\xi_1}\times B_{\xi_2},A_{\xi_3})=
\dim {\overline \B}_{\bW}(B_{\xi_1}\times B_{\xi_2},B_{\xi_3})=0$
otherwise.

\end{lemma}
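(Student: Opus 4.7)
The plan is to combine the $\fS_3$-symmetry established in Section 8.1 with explicit constructions for the lower bounds and the vanishing result of Lemma \ref{prop_AAA} for the upper bounds. First, I would apply the identity $\overline{\B}_{\bW}(M\times N,P^\ast)\simeq\overline{\B}_{\bW}(M\times P,N^\ast)$ together with the duality $(A_\xi)^\ast\simeq B_\xi$ of Lemma \ref{lemma_dual-density}. This shows that $\overline{\B}_{\bW}(B_{\xi_1}\times B_{\xi_2},A_{\xi_3})$ is fully symmetric in the three indices, whereas $\overline{\B}_{\bW}(B_{\xi_1}\times B_{\xi_2},B_{\xi_3})$ is only symmetric in $(\xi_1,\xi_2)$ and fits into the identification $\overline{\B}_{\bW}(B_{\xi_1}\times B_{\xi_2},B_{\xi_3})\simeq\overline{\B}_{\bW}(B_{\xi_1}\times A_{\xi_3},A_{\xi_2})$. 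Thanks to this symmetry I may always place any $\infty$'s among the $\xi_i$ in whichever slot is most convenient.

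Next I would exhibit the explicit non-degenerate examples providing the lower bounds. Using the identifications $B_\infty\simeq\Omega^0_0$ and $A_\infty\simeq\Omega^1_0$, ordinary multiplication $P^{0,0}_{0,0}$ defines a non-degenerate element of $\B_{\bW}(B_\infty\times B_\infty,B_\infty)$, accounting for case (ii) with $s=3$. Its two de Rham composites $P^{0,0}_{0,0}\circ(\d\times\id)$ and $P^{0,0}_{0,0}\circ(\id\times\d)$ lie in $\B_{\bW}(B_\infty\times B_\infty,A_\infty)$ and are linearly independent modulo degenerate maps, giving the two-dimensional contribution for case (i) with $s=3$. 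Finally, $\d^{\xi_3}\circ P^{0,0}_{0,0}$ furnishes a non-degenerate lift for case (i) with $s=2$ and $\xi_3\neq\infty$, and by $\fS_3$-symmetry this handles all three choices of which two indices equal $\infty$.

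For the upper bounds, Theorem \ref{thm2} gives $\dim\cG_{\bW}(B_{\xi_1}\times B_{\xi_2},P)\leq 2$ for $P\in\{A_{\xi_3},B_{\xi_3}\}$, and the argument by restriction from Section 8.3 shows that any germ admits at most one lift modulo the degenerate maps supported on the coordinate axes and the antidiagonal $D=\{m+n=0\}$. To rule out any extra non-degenerate maps in case (iii) with target $A_{\xi_3}$, the $\fS_3$-symmetry lets me assume $\xi_3\neq\infty$; composing a hypothetical non-degenerate $\pi$ with the quotient $A_{\xi_3}\twoheadrightarrow\nc$ yields a $\bW$-equivariant bilinear form into a trivial module, whose $L_0$-support is forced by direct $\bW$-recursion on $B_{\xi_i}$ to reduce to the point $(0,0)$, so this composite is trivial. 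Combining this with the constraints coming from $L_n$-equivariance relating the sub $\nc\subset B_{\xi_i}$ and the quotient $\overline{A}$ then forces $\xi_1=\xi_2=\infty$, hence $s\geq 2$. For case (ii), the isomorphism $\overline{\B}_{\bW}(B_{\xi_1}\times B_{\xi_2},B_{\xi_3})\simeq\overline{\B}_{\bW}(B_{\xi_1}\times A_{\xi_3},A_{\xi_2})$ together with post-composition with $d_{\xi_1}:B_{\xi_1}\to A_\infty$ when $\xi_1\neq\infty$ reduces the question to maps on $A\times A$, which are degenerate by Lemma \ref{prop_AAA}; the same applies to $\xi_2$, so a non-degenerate lift requires $\xi_1=\xi_2=\xi_3=\infty$.

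The main obstacle is the careful analysis along the antidiagonal $D$: the germ does not determine $\pi(e_m^{B_{\xi_1}},e_{-m}^{B_{\xi_2}})$, and the non-split extensions $B_{\xi_i}$ produce $\bW$-recursions whose coefficients depend in a delicate way on the projective coordinates $(a_i,b_i)$ of the $\xi_i$. Matching these recursions against the vanishing of the composite into $\nc$ is what ultimately pins down the count $\dim=s-1$ for case (i) and $\dim=1$ precisely when $s=3$ for case (ii), and gives $\dim=0$ in all remaining cases.
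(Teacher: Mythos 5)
Your overall strategy coincides with the paper's: explicit classical operators ($P^{0,0}_{0,0}$, its de Rham composites, $d^{\xi_3}\circ P^{0,0}_{0,0}$) for the lower bounds, the full $\fS_3$-symmetry of $\overline{\B}_{\bW}(B_{\xi_1}\times B_{\xi_2},A_{\xi_3})$, and uniqueness of the lift away from the lines $m=0$, $n=0$, $m+n=0$ via the restriction argument. But the step that actually produces the count is missing. After the restriction argument one knows $\pi(e_m^M,e_n^N)=(C_1m+C_2n)e_{m+n}^P$ for $mn(m+n)\neq 0$ when the target is $A_{e,f}$, and the entire content of part (i)/(iii) is the three implications $\xi_2\neq\infty\Rightarrow C_1=0$, $\xi_1\neq\infty\Rightarrow C_2=0$, $\xi_3\neq\infty\Rightarrow C_1=C_2$, obtained by writing out $L_{-n}.\pi(e_m^M,e_n^N)$ and $L_n.\pi(e_m^M,e_{-m}^N)$ and solving for the unknown values on the three exceptional lines. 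You name exactly this as ``the main obstacle'' and describe what it should yield, but you do not carry it out, so the upper bounds are not proved.

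Two intermediate claims are moreover incorrect. First, the composite $B_{\xi_1}\times B_{\xi_2}\to A_{\xi_3}\twoheadrightarrow\nc$ is \emph{not} forced to have support $\{(0,0)\}$: the space $\B_{\bW}(B_{\xi_1}\times B_{\xi_2},\nc)\simeq\Hom_{\bW}(B_{\xi_1},A_{\xi_2})$ is one-dimensional, spanned by the pairing coming from $B_{\xi_1}\twoheadrightarrow\overline A\hookrightarrow A_{\xi_2}$, whose support is the whole punctured antidiagonal; concretely, for $\xi_3=\infty$ the composite of $P^{1,0}_{0,0}\circ(d\times\id)$ with $\Res:\Omega^1_0\to\nc$ sends $(z^m,z^{-m})$ to $m\neq 0$. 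The vanishing of $\pi$ on the antidiagonal when $\xi_3\neq\infty$ is a \emph{consequence} of the recursion $L_n.\pi(e_m^M,e_{-m}^N)$, not an a priori support statement, and it is that same recursion which yields the constraint $C_1=C_2$ you need. Second, the reduction of case (ii) ``by post-composition with $d_{\xi_1}:B_{\xi_1}\to A_\infty$'' is not a legitimate operation: $d_{\xi_1}$ is a map \emph{out of} the first argument, and the only class-$\cN$ morphisms into $B_{\xi_1}$ from the $A$-family factor through the trivial module, so pre-composition destroys non-degeneracy and Lemma \ref{prop_AAA} cannot be invoked for sources in the $B$-family. The paper instead shows directly, for target $B_{e,f}$, that the relations coming from $L_{-n}.\pi(e_n^M,e_0^N)$ force $\xi_1=\xi_2=\xi_3$, and then that either $\xi=\infty$ and $\pi$ is a multiple of $P^{0,0}_{0,0}$, or the relevant constant vanishes and $\pi$ factors through $\overline A\times\overline A$, hence is degenerate.
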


\begin{proof}
Let $(a,b), (c,d)$ and $(e,f)$ be projective coordinates of $\xi_1,\xi_2$ and $\xi_3 \in \np^1$, respectively, and fix a basis $\{e_m^M\}$, 
$\{e_m^N\}$ of $M=B_{a,b}$ and $N=B_{c,d}$, respectively, as in Section \ref{sect_KS}.  

First, we consider $\pi \in \B_{\bW}(B_{a,b} \times B_{c,d}, P)$ with $P=B_{e,f}$. 
Let $\{e_m^P\}$ be a basis of $B_{e,f}$ as in $\S$ \ref{sect_KS}. By calculating
$L_{-n}.\pi(e_n^M,e_0^N)$ and $L_{-n}.\pi(e_0^M,e_n^N)$, we see that $\xi_1=\xi_2=\xi_3 \in \np^1$. Hence, we may assume that $(a,b)=(c,d)=(e,f)$ without loss of generality. Similarly, by calculating 
$L_m.\pi(e_n^M,e_0^N)$ and $L_m.\pi(e_0^M,e_n^N)$, we see that there exists a constant $C \in \nc$ satisfying $\pi(e_m^M,e_0^N)=Ce_m^P=\pi(e_0^M,e_m^N)$ for any $m$.
It can be shown that such a $\bW$-equivariant map exists only if $a=0$ or $C=0$. In the former case, $\pi$ is a scalar multiple of $P_{0,0}^{0,0}$.  In the latter case, $\pi$ factors through $\overline{A}\times \overline{A}$ and one can apply a similar argument to the latter half of the proof of Lemma \ref{prop_AAA} to see that $\pi$ is degenerate. 

Second, we consider $\pi \in B_{\bW}(B_{a,b} \times B_{c,d}, P)$ with $P=A_{e,f}$. 
Let $\{e_m^P\}$ be a basis of $A_{e,f}$ as in $\S$ \ref{sect_KS}. 
By an argument by restriction, one sees that there exists constants $C_1, C_2 \in \nc$ such that $\pi(e_m^M, e_n^N)=(C_1m+C_2n)e_{m+n}^P$ for $m,n, m+n\neq 0$. Set $\pi(e_m^M,e_0^N)=a(m)e_m^P, \pi(e_0^M, e_m^N)=b(m)e_m^P$ and $\pi(e_m^M, e_{-m}^N)=c(m)e_0^P$. It is clear that $a(0)=b(0)=c(0)=0$.
By calculating $L_{-n}.\pi(e_m^M, e_n^N)$ with $m,n, m+n\neq 0$, one obtains
that $a(m)=-d^{-1}C_1m$ if $c=0$ and that $a(m)=C_1=0$ otherwise. Similarly, 
on obtains that $b(m)=-b^{-1}C_2m$ if $a=0$ and that $b(m)=C_2=0$ otherwise. 
Finally, by calculating $L_n.\pi(e_m^M, e_{-m}^N)$ with $n\neq \pm m$, on obtains that $c(m)=-(C_1-C_2)f^{-1}m$ if $e=0$ and that $c(m)=0$ and $C_1=C_2$ otherwise. Hence, it follows that $\dim \overline{\B}_{\bW}(B_{a,b} \times B_{c,d},A_{e,f})$ is equal to $0$ if $s\leq 1$ and is less than $s-1$ if $s\geq 2$. 
Now, for $s\geq 2$, the result follows from Table \ref{table5}.
\end{proof}

\subsection{The case $(M,N)=(\Omega_u^0, \Omega_{-u}^0)$ with 
$u \neq 0 \in \nc/\nz$}

The next lemma can be proved in a way similar to the proof of Lemma \ref{prop_002}.
\begin{lemma} Let $\xi\in\np^1$. We have 

(i)  $\dim {\overline \B}_{\bW}(\Omega_u^0\times\Omega_{-u}^0,A_{\infty})=2$
and $\dim {\overline \B}_{\bW}(\Omega_u^0\times\Omega_{-u}^0,A_{\xi})=1$
if $\xi\neq\infty$.

(ii)  $\dim {\overline \B}_{\bW}(\Omega_u^0\times\Omega_{-u}^0,B_{\infty})=1$
and $\dim {\overline \B}_{\bW}(\Omega_u^0\times\Omega_{-u}^0,B_{\xi})=0$
if $\xi\neq\infty$.

\end{lemma}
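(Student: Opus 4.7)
The plan is to follow the template of Lemma \ref{prop_002}, computing directly in coordinates. Fix bases $\{e_x^M\}_{x\in u}$, $\{e_y^N\}_{y\in -u}$, $\{e_n^P\}_{n\in\nz}$ of $M=\Omega_u^0$, $N=\Omega_{-u}^0$, and $P\in\{A_{a,b},B_{a,b}\}$, as in Section \ref{sect_KS}. Since $u\not\equiv 0 \mod \nz$, every $x\in u$ and every $y\in -u$ is nonzero, so for any $\pi\in \B_\bW(M\times N,P)$ the only special weight to watch is $x+y=0$. Write $\pi(e_x^M,e_y^N)=X(x,y)\,e_{x+y}^P$ when $x+y\neq 0$, and $\pi(e_x^M,e_{-x}^N)=c(x)\,e_0^P$.

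First I would determine $X$ from the germ alone. By Theorem \ref{thm2} we know $\dim\cG_\bW(M\times N,P)=2$, and the argument by restriction of Section 8.3 pins $X$ down uniquely in terms of its germ class. A direct computation of $\bW$-equivariance on the bulk (applying $L_k$ to $\pi(e_x^M,e_y^N)$ when $x+y\neq 0$ and $x+y+k\neq 0$) yields the recurrences $(x+y+k)X(x,y)=x\,X(x+k,y)+y\,X(x,y+k)$ for $P$ of type $A$, and $(x+y)X(x,y)=x\,X(x+k,y)+y\,X(x,y+k)$ for $P$ of type $B$. The two-dimensional solution space of the first is $X(x,y)=C_1 y+C_2 x$; that of the second is $X(x,y)=(D_1 x+D_2 y)/(x+y)$, spanned by the two visible solutions $x/(x+y)$ and $y/(x+y)$.

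Next I would impose equivariance at the weight-zero boundary of $P$, where the actions $L_k e_0^{A_{a,b}}=(ak^2+bk)e_k$ and $L_k e_0^{B_{a,b}}=0$ enter. Computing $L_k\pi(e_x^M,e_{-x}^N)$ for $k\in\nz\setminus\{0\}$ gives, in the $A$-case, $c(x)(ak+b)=x(C_2-C_1)$ for every such $k$, which splits into $c(x)\,a=0$ and $c(x)\,b=x(C_2-C_1)$. In the $B$-case the same computation forces $D_1=D_2=:D$; applying $L_k$ next to $\pi(e_x^M,e_y^N)$ with $x+y\neq 0$ and $x+y+k=0$, and writing $c(x)=\beta x+\gamma$ (forced to be linear by the homogeneous part of the resulting identity, since $f(x)/x$ must be constant along integer shifts), yields the two conditions $Da=0$ and $\gamma=-Db$, with $\beta$ free.

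Reading off the four subcases is then routine. For $P=A_\infty$ ($a=0,\,b=1$) both $C_1,C_2$ are free with $c(x)=x(C_2-C_1)$, giving dimension $2$. For $P=A_\xi$ with $\xi\neq\infty$ ($a\neq 0$) we get $c=0$ and $C_1=C_2$, giving dimension $1$. For $P=B_\infty$, $D$ is free with $\gamma=-D$, and the separate $\beta$-mode is supported entirely on $\{x+y=0\}$, hence degenerate by Lemma \ref{germ-deg} and killed in the quotient $\overline{\B}_\bW$, giving dimension $1$. For $P=B_\xi$ with $\xi\neq\infty$ we are forced into $D=\gamma=0$ and only the degenerate $\beta$-mode survives, giving dimension $0$. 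The main obstacle is establishing the $B$-case bulk formula $(D_1 x+D_2 y)/(x+y)$, whose non-polynomial shape has no analogue in the proof of Lemma \ref{prop_002}; once this ansatz is justified via the two-dimensional solution space of the bulk recurrence, every remaining verification is short.
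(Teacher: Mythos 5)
Your proof is correct and follows exactly the route the paper intends: the paper's own ``proof'' of this lemma is the one-line remark that it can be proved in a way similar to Lemma \ref{prop_002}, i.e.\ precisely the coordinate computation you carry out, with the bulk $X$ pinned down by the two-dimensional germ space via the restriction argument and the boundary handled at the single special weight $x+y=0$. Your boundary identities ($c(x)a=0$ and $c(x)b=x(C_2-C_1)$ in the $A$-case; $D_1=D_2$, $Da=0$, $\gamma=-Db$ with a free degenerate $\beta$-mode in the $B$-case) all check out and yield the stated dimensions.
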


\section{Conclusion}\label{sect_conclusion}

From the classification, we can derive the following corollaries,
some of which are used in \cite{IM}.

\begin{cor}\label{cor1}
The primitive bilinear maps between modules of the class $\cS$ are the following:

(i) the Poisson products $P^{\delta_1,\delta_2}_{u,v}$, 

(ii) the Poisson brackets $B^{\delta_1,\delta_2}_{u,v}$ for
$\delta_1.\delta_2.(\delta_1+\delta_2)\neq 0$

(iii) the Lie brackets $B^{\delta_1,\delta_2}_{u,v}(\xi)$ for
$\delta_1.\delta_2.(\delta_1+\delta_2)= 0$ and $\xi\neq \infty$ if $\delta_1=\delta_2=0$,

(iv) $\Theta_\infty$, 

(iv) the Grozman operation $G_{u,v}$,

(vi) $\eta(\xi_1,\xi_2,\xi_3)$ for $\xi_1,\xi_2$ and $\xi_3$ are all distinct,
and their $\fS_3$-symmetric

(vii) the obvious map $P^M$, and their $\fS_3$-symmetric.
\end{cor}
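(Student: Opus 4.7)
The plan is to traverse the full classification of $\bW$-equivariant bilinear maps produced by Theorems 1, 3.1 and 3.2 (together with Tables 2, 4, 5), and decide primitivity case by case. The key structural input is that, up to conjugacy, the class $\cN$ consists of only two orbit types of morphisms: $A\twoheadrightarrow\nc\hookrightarrow B$ and $B\twoheadrightarrow\overline A\hookrightarrow A$. Precomposing one slot of a bilinear map with an $\cN$-morphism therefore forces that slot to depend only on a $\Res$- or $\overline A$-projection, while postcomposing with an $\cN$-morphism confines the image to the trivial $\nc$-submodule of a $B$-family module or to the $\overline A$-submodule of an $A$-family module. This sharp constraint decides primitivity by inspection in each case.

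First, Lemma \ref{primitivity} handles the generic cases: whenever $M,N,P$ are irreducible tensor density modules, no $\cN$-morphism is available between them and the bilinear map is automatically primitive. This covers items (i), (ii), (v) and the generic range of (vi) of the corollary --- the Poisson products, the Poisson brackets when $\delta_1\delta_2(\delta_1+\delta_2)\neq 0$, the Grozman operator $G_{u,v}$, and $\eta(\xi_1,\xi_2,\xi_3)$ with three pairwise distinct points. For bilinear maps involving $AB$-family modules, I would inspect Tables 4 and 5 line by line: every entry displayed as a composition containing $d_\xi$ or $d^\xi$ is manifestly imprimitive, and the remaining entries correspond to the extended Lie brackets $B^{\delta_1,\delta_2}_{u,v}(\xi)$ of Section 4.2 (item iii) and the obvious maps $P^M$ (item vii). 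Primitivity of the former follows because the bracket in $\cP_\xi$ lands in the non-central direct summand, which cannot be produced by a composition factoring through $\nc$ or $\overline A$; primitivity of $P^M$ for $M=A_\xi$ or $B_\xi$ with $\xi\in\np^1$ follows since neither of the two $\cN$-factorisation patterns produces a non-zero bilinear map with the correct restriction to the $\nc$-subfactor of $\overline A\oplus\nc$.

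The main obstacle lies in the degenerate case of Theorem 1. Trivial maps factor as $\Res\times\Res$ composed with $\nc\hookrightarrow B$ and are imprimitive; maps $(m,n)\mapsto\Res(m)\psi(n)$ are imprimitive by the same device; $\eta^t_\xi$ and $\eta(\xi_1,\xi_2,\xi_3)$ with two of the $\xi_i$ coinciding decompose as linear combinations of imprimitive forms via the $\Res$-projection; and $\Theta_\xi$ for $\xi\neq\infty$ equals $-a^{-1}B^{0,0}_{0,0}(\xi)$ by Section 4.3 and is subsumed in item (iii). The delicate remaining points are item (vi) with all three $\xi_i$ distinct and item (iv), namely $\Theta_\infty$. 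For the former, I would argue that every imprimitive form $A_{\xi_1}\times A_{\xi_2}\to A_{\xi_3}$ vanishes when the $\xi_i$ are pairwise distinct in $\np^1$, using that $\Hom_\bW(A_{\xi_i},A_{\xi_j})=0$ for $\xi_i\neq\xi_j$ in $\np^1$ to kill single-$\cN$-morphism factorisations, and using that $A_{\xi_3}$ has no trivial submodule to kill the residual double-$\cN$-morphism factorisations. For $\Theta_\infty$, the same analysis shows that every imprimitive form $A_\infty\times A_\infty\to B_\infty$ is necessarily a trivial bilinear map with image in $\nc\subset B_\infty$ and support in $\{(0,0)\}$, using that $\Hom_\bW(A_\infty,B_\infty)$ is one-dimensional (spanned by $\rho$). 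Since $\Theta_\infty$ has support $H\cup V\cup D$ by Lemma \ref{lemma_class-deg2}, which strictly exceeds $\{(0,0)\}$, it is not a linear combination of imprimitive forms and is therefore primitive.
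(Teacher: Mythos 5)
The paper itself offers nothing beyond ``this follows easily by closed examination,'' so your case-by-case traversal of Theorems 1, 3.1, 3.2 and Tables 2, 4, 5 is exactly the intended route, and most of your verifications are sound: the irreducible cases via Lemma \ref{primitivity}, the disposal of the degenerate list of Theorem 1, and in particular the support argument for $\Theta_{\infty}$ (every imprimitive form $A_\infty\times A_\infty\rightarrow B_\infty$ is trivial because $\Hom_{\bW}(A_\infty,B_\infty)$ is spanned by $\rho$ and the only invariant pairing $A_\infty\times A_\infty\rightarrow\nc$ is $\Res\times\Res$, whereas $\Supp\Theta_\infty=H\cup V\cup D$). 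One internal inconsistency should be removed: your first paragraph claims Lemma \ref{primitivity} covers ``the generic range of (vi),'' but $A_{\xi_1},A_{\xi_2},A_{\xi_3}$ are all reducible, so that lemma does not apply there; you correctly re-treat this case in your last paragraph, and the right argument is the one you sketch there (no map $A_{\xi_1}\times A_{\xi_2}\rightarrow A_{\xi_3}$ has support contained in $H$ or in $V$ alone, nor image contained in $\overline A$, when the $\xi_i$ are pairwise distinct).

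The genuine gap is in your treatment of $P^M$. The projection $\overline A\oplus\nc\twoheadrightarrow\nc\hookrightarrow\Omega^0_0$ is a morphism of the class $\cN$ (it is of type $A_{0,0}\twoheadrightarrow\nc\hookrightarrow B_{0,1}$ between non-isomorphic modules of $\cS$), and composing the Poisson product $P^{0,\delta}_{0,u}:\Omega^0_0\times\Omega^\delta_u\rightarrow\Omega^\delta_u$ with it in the first slot gives exactly $P^M$ for $M=\Omega^\delta_u$. Hence $P^M$ \emph{is} an imprimitive form whenever there exists a nondegenerate equivariant product $\Omega^0_0\times M\rightarrow M$ sending $(1,m)$ to $m$ — that is, for every tensor density module $M$, including $M=A_\infty=\Omega^1_0$ and $M=B_\infty=\Omega^0_0$. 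Your claimed range ``$M=A_\xi$ or $B_\xi$ with $\xi\in\np^1$'' therefore fails at $\xi=\infty$; the argument you give (no $\cN$-factorisation reproduces the restriction to the $\nc$-factor) is correct precisely for $\xi\neq\infty$, because an indecomposable $B_\eta$ has no trivial quotient, so no degenerate map $B_\eta\times A_\xi\rightarrow A_\xi$ is supported on $V$, and the nondegenerate space $\overline\B_{\bW}(B_\eta\times A_\xi,A_\xi)$ vanishes unless $\eta=\xi=\infty$. You should state this restriction explicitly (it is arguably an imprecision already present in item (vii) of the corollary, which asserts primitivity of $P^M$ for unrestricted $M$).
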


This follows easily by closed examination. 

\begin{cor}
Let $M,\,N,\,P\in\cS^*$ such that 
$\B^0_\bW(M\times N,P)\neq 0$. Then the number of reducible modules among
$M$, $N$ and $P$ is $1$ or $3$.
\end{cor}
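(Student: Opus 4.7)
The plan is to deduce the corollary directly from the classification of degenerate $\bW$-equivariant bilinear maps given by Theorem 1 (equivalently Table \ref{table2}), via an elementary enumeration of cases.

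First I would recall from the Kaplansky--Santharoubane theorem that a module in $\cS$ is reducible iff it is isomorphic to some $A_\xi$ or $B_\xi$ ($\xi\in\np^1$) or to $\bar A\oplus\nc$; equivalently, the irreducible modules in $\cS$ are precisely the tensor density modules $\Omega^\delta_u$ with $(u,\delta)\neq (0,0),(0,1)$. By Lemma \ref{lemma_dual-density}, the class of reducible modules is stable under the restricted duality, so $X$ and $X^\ast$ are simultaneously reducible or simultaneously irreducible.

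Next, assuming that $M,N,P$ are indecomposable, I would inspect each of the six rows of Table \ref{table2}. Since the third column lists $P^\ast$ rather than $P$, the genuine triple $(M,N,P)$ is recovered by applying $A_\xi^\ast = B_\xi$ (and vice versa) to the third entry, possibly after permuting using the $\fS_3$-symmetry. In rows $1,3,4,5$ and $6$, each of the three entries $M$, $N$, $P$ belongs to the $AB$-family and is therefore reducible, giving reducible count $3$. Row $2$ corresponds to the $\fS_3$-orbit of the symmetric triple $\{A_\xi, X, X^\ast\}$ with $X\not\simeq A_\xi, B_\xi$; whichever permutation is chosen and whichever entry is dualized, the resulting $(M,N,P)$ consists of exactly one element of the $AB$-family together with two entries drawn from $\{X, X^\ast\}$. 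By the stability of reducibility under duality recalled above, this count is $1$ when $X$ is irreducible and $3$ when $X$ is reducible. Thus in every nonzero indecomposable case the count is $1$ or $3$, never $0$ or $2$.

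Finally I would dispose of the one remaining reducible module $\bar A\oplus\nc$, which is itself decomposable, by invoking the remark of Section \ref{sect_0.3}: any bilinear map involving $\bar A\oplus\nc$ splits along the two direct summands, reducing the problem to the indecomposable case handled above, and since $\bar A\oplus\nc$ is itself reducible its presence only preserves the ``$1$ or $3$'' alternative. The only non-routine step of the whole argument is the bookkeeping for row $2$ once the full $\fS_3$-orbit and the dualization of one entry are unfolded; even this is elementary.
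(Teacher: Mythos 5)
Your proof is correct and follows essentially the same route as the paper: both arguments reduce the statement to the classification of degenerate maps in Theorem 1 (equivalently Table 2), observe that in every case except the single-line-support one all three modules lie in the $AB$-family, and in the single-line case note that the two non-distinguished modules are simultaneously reducible or irreducible (the paper via the almost-isomorphism $N\rightarrow P$, you via the duality-stability of reducibility applied to the pair $\{X,X^{\ast}\}$, which amounts to the same thing). Your extra paragraph on $\overline{A}\oplus\nc$ addresses a case the paper silently ignores, though your assertion that its presence ``only preserves the alternative'' is asserted rather than proved; this does not affect the main argument.
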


\begin{proof}
Let $\pi\in\B_{\bW}^0(M\times N,P)$ be non-zero. It follows from 
Theorem 1 that the three modules are reducible whenever
$\Supp\,\pi$ is not one line. Otherwise, we can assume that.
Assume $\Supp\,\pi=V$. Then $M$, which admits a trivial quotient is reducible.
Moreover, there is an almost-isomorphism $\phi:N\rightarrow P$, which
proves that $N$ and $P$ are simultaneously reducible or irreducible.
\end{proof}

\begin{cor}
Let $M,\,N,\,P\in\cS^*$. If
$\B_\bW(M\times N,P)\neq 0$, then we have

\noindent\centerline{
$(\deg\,M,\,\deg\,N,\,\deg\,P)\in\fz$.}
\end{cor}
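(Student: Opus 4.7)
The plan is to exploit the exact sequence
\[ 0 \to \B^0_\bW(M\times N,P) \to \B_\bW(M\times N,P) \to \cG_\bW(M\times N,P) \]
and to treat separately the cases where a non-zero $\pi \in \B_\bW(M\times N,P)$ has non-vanishing germ and where its germ is zero. Since $\deg$ is multivalued on modules in the $AB$-family, the claim $(\deg M,\deg N,\deg P)\in\fz$ amounts to exhibiting some triple $(\delta_1,\delta_2,\gamma)$ with $\delta_i\in\deg M,\deg N$ and $\gamma\in\deg P$ that lies in~$\fz$.

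If $\cG(\pi)\neq 0$, then $\cG_\bW(M\times N,P)$ is non-zero, and Theorem~\ref{thm2} already states that some such triple lies in $\fz$; nothing further is needed in this case.

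Suppose instead that $\pi\in \B^0_\bW(M\times N,P)$ is non-zero and degenerate. I would invoke Theorem~1, which classifies such $\pi$, up to $\fS_3$-symmetry, into four types: (i) the trivial maps $A_{\xi_1}\times A_{\xi_2}\to B_{\xi_3}$; (ii) the maps $A_\xi\times N\to P$ of the form $(m,n)\mapsto \Res(m)\psi(n)$, where $\psi:N\to P$ is an almost-isomorphism; (iii) the maps $\eta(\xi_1,\xi_2,\xi_3)$ and $\eta^t_\xi$ between modules of the $A$-family; and (iv) $\Theta_\xi+\tau$ for $\tau$ trivial. In types (i), (iii) and (iv) every module involved lies in the $AB$-family, so each has degree $\{0,1\}$ and the triple $(0,0,0)\in H_0\subset\fz$ (or $(0,0,1)\in H_1\subset\fz$ in type (iv)) suffices. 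For type (ii), $\deg M=\{0,1\}$ because $M=A_\xi$, and since an almost-isomorphism between modules of $\cS$ is either an isomorphism or of the form $B_\xi\twoheadrightarrow\overline A\hookrightarrow A_\eta$, the modules $N$ and $P$ always share a common degree value $\delta$; hence $(0,\delta,\delta)\in H_0\subset\fz$.

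The main difficulty of the corollary is already absorbed in Theorems~1 and \ref{thm2}; what remains is the routine bookkeeping above. The only mildly subtle point is the handling of the multivalued $\deg$ for $AB$-family modules, but this is precisely what makes the verification trivial, since the plane $H_0=\{\gamma=\delta_1+\delta_2\}$ contains $(0,\delta,\delta)$ for every scalar $\delta$ and in particular absorbs all degenerate cases in which one of the arguments carries the full degree set $\{0,1\}$.
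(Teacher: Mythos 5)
Your proof follows essentially the same route as the paper: split along the exact sequence $0\to \B^0_\bW(M\times N,P)\to \B_\bW(M\times N,P)\to \cG_\bW(M\times N,P)$, invoke Theorem 2 when the germ is non-zero, and invoke the classification of degenerate maps (Theorem 1) when it is zero; the paper phrases the degenerate case via the support $\overline{\Supp\,\pi}$ rather than via the four types, but the content is the same. One small imprecision: since the corollary is stated for $M,N,P\in\cS^*$, each module comes with a \emph{chosen} degree, so it does not suffice to exhibit \emph{some} triple of degrees lying in $\fz$ --- you must verify the given one for every admissible choice. This costs nothing extra: all eight triples with entries in $\{0,1\}$ lie in $H_0\cup H_1\cup D_3\cup D_4\cup\{P_4\}$, and in your type (ii) with $\deg N=\deg P=\delta\notin\{0,1\}$ the remaining choice $(1,\delta,\delta)$ lies on $D_4$; but these checks should be stated.
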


\begin{proof}
If $\cG_\bW(M\times N,P)\neq 0$, then the corollary follows from Theorem \ref{thm2}.
Otherwise, we have $\B^0_\bW(M\times N,P)\neq 0$.  If 
$\{\deg\,M,\,\deg\,N,\,\deg\,P\}\subset\{0,1\}$, then 
$(\deg\,M,\,\deg\,N,\,\deg\,P)$ belongs to $\fz$. Otherwise,
$\overline{\Supp\,\pi}$ is one line for any  non-zero $\pi\in \B^0_\bW(M\times N,P)$.
By the $\fS_3$-symmetry, we can assume that  $\overline{\Supp\,\pi}=V$. In such a case,
$N$ is isomorphic to $P$ and we have
$\deg\,M\in\{0,1\}$ and $\deg\,N=\deg\,P\not\in\{0,1\}$. It follows that
$(\deg\,M,\,\deg\,N,\,\deg\,P)$ belongs to $\fz$ as well.
\end{proof}

Let $M,\,N,\,P\in\cS$. The triple $(M,\,N,\,P)$ is called {\it mixing}
if we have $\B^0_\bW(M\times N,P)\neq 0$ and $\overline{\B}_\bW(M\times N,P)\neq 0$.
Here is a table of example of mixing triples:

\begin{table}[h]
\caption{\textbf{Example of mixing triples $(M,N,P)$}}
\label{table_mixing}

\begin{center}
\begin{tabular}{|c|c|c||c|} \hline

 $M\times N$ or $N\times M$ &$P$& A non-degenerate $\pi_1$ 
&A degenerate $\pi_2$  
\\ \hline

$\Omega^0_u\times \Omega^1_{0}$ & $\Omega^1_{u}$& 
$P^{0,1}_{u,0}$ & $(f,\alpha)\mapsto (\Res\,\alpha)\d f $   \\ \hline

 $\Omega^0_u\times \Omega^0_{-u}$ & $\Omega^0_{0}$&$P^{0,0}_{u,-u}$ 
& $(f,g)\mapsto \Res\,f \d g$\\ \hline

\end{tabular}
\end{center}
\end{table}

\begin{cor}
Any mixing triple  $(M,\,N,\,P)$ appears in the Table \ref{table_mixing}
and in each case $\pi_1$ and $\pi_2$ form a basis of 
$\B_\bW(M\times N,P)$.
\end{cor}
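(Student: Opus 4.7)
The plan is to combine the two previous corollaries with the explicit classifications of Theorems 1, 2 and 3. By definition, a triple $(M,N,P)$ is mixing iff both $\B^{0}_\bW(M\times N,P)\neq 0$ and $\overline{\B}_\bW(M\times N,P)\neq 0$. The previous corollary forces the number of reducible modules among $M,N,P$ to be $1$ or $3$, while the third corollary forces $(\deg M,\deg N,\deg P)\in\fz$. Together with Theorem 3 (Tables \ref{table4} and \ref{table5}), these already rule out most configurations.

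First I would dispose of the case where all three of $M,N,P$ lie in the $AB$-family. Inspection of Tables \ref{table4} and \ref{table5} shows that any non-degenerate map with all three modules in the $AB$-family must have target in the $AB$-family, but Lemma \ref{prop_AAA} then forces it to be degenerate in the $A\times A$ case; the remaining $B\times B\to B$ subcases listed in Section 10 give either non-degenerate maps that are scalar multiples of $P^{0,0}_{0,0}$ (forcing $M=N=P=B_\infty$, in which case $\B^0$ vanishes by Theorem 1) or yield no non-degenerate lift. Hence no mixing triple occurs in this case.

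Second, the substantial case is when exactly one of $M,N,P$ is reducible. By the $\fS_3$-symmetry (on $\overline{\B}_\bW$, thanks to Lemma 3 of Section 8.1), I may place the reducible module wherever convenient. Looking at Theorem 1, the degenerate $\bW$-equivariant maps with exactly one reducible module arise only from line 2 of Table \ref{table2}: $\pi_2(m,n)=\phi(m)\psi(n)$, where $\phi:A_\xi\to\nc$ is the residue and $\psi:N\to P$ is an almost-isomorphism with $N\not\simeq A_\xi, B_\xi$. Since an almost-isomorphism between indecomposable modules of class $\cS$ is either an isomorphism or the composition $B_\eta\twoheadrightarrow\overline A\hookrightarrow A_\tau$, one obtains $P\simeq N$ or a very restricted alternative. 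Cross-referencing these options with the list of non-degenerate maps in Table \ref{table4} for triples involving $A_\xi\times N\to N$ (or $A_\xi\times N\to N^{*}$), the degree condition $(\deg M,\deg N,\deg P)\in\fz$ forces the reducible module to have degree in $\{0,1\}$ and the target to coincide with $N$; this leaves only $(M,N,P)=(\Omega^{0}_{u},\Omega^{1}_{0},\Omega^{1}_{u})$ with $u\not\equiv 0\mod\nz$, and its $\fS_3$-image $(\Omega^{0}_{u},\Omega^{0}_{-u},\Omega^{0}_{0})$. These are exactly the two rows of Table \ref{table_mixing}.

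For the dimensional claim, in each of these two triples $\pi_1$ is non-degenerate and $\pi_2$ is degenerate, hence $\cG(\pi_1)\neq 0$ while $\cG(\pi_2)=0$, so $\pi_1$ and $\pi_2$ are linearly independent in $\B_\bW(M\times N,P)$. Using the exact sequence from Section 8.1, I would then show $\dim\B^{0}_\bW(M\times N,P)=1$ (directly from Theorem 1: the degenerate map is unique up to scalar in each case, since the almost-isomorphism $\psi$ is unique up to scalar) and $\dim\overline{\B}_\bW(M\times N,P)=1$ (from Lemma \ref{prop_0dd+1} or the appropriate row of Table \ref{table4}, noting $\deg M=\{0,1\}$, $\deg N=\deg P$), giving $\dim\B_\bW(M\times N,P)=2$. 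The hard part will be the bookkeeping in the second case: cleanly enumerating, up to $\fS_3$-symmetry and up to isomorphism of almost-isomorphism targets, all sub-cases to make sure no triple has been missed, particularly distinguishing when $\psi:N\to P$ is a genuine isomorphism versus the non-iso almost-isomorphism $B_\eta\to A_\tau$.
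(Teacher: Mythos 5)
Your overall strategy is the same as the paper's (which simply cross-references the table of degenerate maps with the tables of non-degenerate ones), and your treatment of the "exactly one reducible module" case is sound. But there is a genuine error in your disposal of the case where all three modules lie in the $AB$-family. You claim that the non-degenerate $B\times B\to B$ maps force $M=N=P=B_\infty$, "in which case $\B^0$ vanishes by Theorem 1." That is false: the unordered triple $\{M,N,P^*\}=\{B_\infty,B_\infty,A_\infty\}$ is exactly line 3 of Table \ref{table2} with $\xi_1=\xi_2=\xi_3=\infty$, so $\dim\B^0_\bW(B_\infty\times B_\infty,B_\infty)=1$; concretely the map $(f,g)\mapsto\Res(f\,\d g)\cdot 1$ is a nonzero degenerate $\bW$-equivariant map $\Omega^0_0\times\Omega^0_0\to\Omega^0_0$, with support the antidiagonal $D$. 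Hence $(B_\infty,B_\infty,B_\infty)$ \emph{is} a mixing triple, as are $(B_\infty,A_\infty,A_\infty)$ and $(A_\infty,B_\infty,A_\infty)$ (same unordered triple $\{B_\infty,B_\infty,A_\infty\}$, with $\overline\B\neq 0$ by the $s=3$ case of the lemma in Section 10.3). These are precisely the $u\equiv 0$ specializations of the two rows of Table \ref{table_mixing}, so your added restriction "$u\not\equiv 0\bmod\nz$" on the first row both excludes genuine mixing triples and contradicts your own (incorrect) conclusion that the all-$AB$ case contributes nothing. The fix is straightforward: redo the all-$AB$ enumeration, noting that the only ordered all-$AB$ triples with $\overline\B_\bW\neq 0$ are $(B_{\xi_1},B_{\xi_2},A_{\xi_3})$ with at least two indices equal to $\infty$ (for which $\{M,N,P^*\}$ consists of three $B$'s, so $\B^0=0$ by Table \ref{table2}), together with the three orderings of $\{B_\infty,B_\infty,A_\infty\}$, which are mixing and are the $u\equiv 0$ rows of the table.

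A secondary slip: for the dimension count you cite Lemma \ref{prop_0dd+1} and Table \ref{table4}, but every entry of Table \ref{table_mixing} has $\deg M=\deg N=\deg P=\{0,1\}$, so the relevant statement is Theorem 3.2 and Table \ref{table5} (lines 2, 3 and 5), which give $\dim\overline\B_\bW=1$ in each case; combined with $\dim\B^0_\bW=1$ from Table \ref{table2} this yields $\dim\B_\bW=2$, and your observation that $\cG(\pi_1)\neq 0=\cG(\pi_2)$ with $\pi_2\neq 0$ correctly shows $\pi_1,\pi_2$ form a basis.
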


The corollary follows immediately from Tables \ref{table2} and \ref{table4}.

\begin{cor}
For any triple  $(M,\,N,\,P)$ of indecomposable $\bW$-modules in the class $\cS$,
we have $\dim \B_{\bW}(M\times N,P)\leq 2$.
\end{cor}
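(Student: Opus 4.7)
The plan is to deduce the bound from the short exact sequence
\[
0 \to \B^0_\bW(M\times N,P) \to \B_\bW(M\times N,P) \to \cG_\bW(M\times N,P)
\]
recalled in Section 8. Writing $\overline{\B}_\bW(M\times N,P)$ for the image of $\B_\bW$ in $\cG_\bW$, this exact sequence yields
\[
\dim \B_\bW(M\times N,P) = \dim \B^0_\bW(M\times N,P) + \dim \overline{\B}_\bW(M\times N,P).
\]
Theorem 2 provides $\dim \cG_\bW(M\times N,P)\leq 2$, hence $\dim \overline{\B}_\bW\leq 2$, and direct inspection of Table \ref{table2} gives $\dim \B^0_\bW\leq 2$. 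So each summand is individually bounded by $2$, but the naive sum only yields the bound $4$; the real content is to prevent both summands from being simultaneously maximal.

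I would then split into cases according to whether each summand vanishes. If $\overline{\B}_\bW(M\times N,P)=0$, then $\dim \B_\bW = \dim \B^0_\bW \leq 2$; if $\B^0_\bW(M\times N,P)=0$, then $\dim \B_\bW = \dim \overline{\B}_\bW \leq 2$. Hence the desired bound holds as soon as one of the two summands vanishes, and only the case where both summands are non-zero remains.

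Finally, for the remaining case, $(M,N,P)$ is by definition a mixing triple, and I would invoke the preceding corollary, which asserts that every mixing triple appears in Table \ref{table_mixing}; each of the two rows there has $\dim \B^0_\bW = \dim \overline{\B}_\bW = 1$, giving $\dim \B_\bW = 2$. The main conceptual point---and the only place where the naive additive bound genuinely has to be sharpened---is that the rows of Table \ref{table2} attaining $\dim \B^0_\bW = 2$ (namely rows 5 and 6, of $(A_\xi,A_\xi,A_\xi)$- and $(A_\xi,A_\xi,B_\xi)$-type) admit only degenerate bilinear maps by Lemma \ref{prop_AAA}, so $\overline{\B}_\bW$ vanishes in these cases and the potentially dangerous combination $\dim \B^0_\bW = \dim \overline{\B}_\bW = 2$ never arises. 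This is the only mild obstacle; everything else is pure bookkeeping on top of Theorems 1, 2, 3 and the mixing-triples corollary.
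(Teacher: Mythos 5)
Your proof is correct and follows essentially the same route as the paper, which deduces the bound from Theorem 1 (equivalently Table \ref{table2}, giving $\dim \B^0_{\bW}\leq 2$), Theorem 2 (giving $\dim \cG_{\bW}\leq 2$), and the mixing-triples corollary to handle the case where both summands are non-zero. Your case split and the observation that the mixing-triples corollary forces $\dim \B^0_{\bW}=\dim\overline{\B}_{\bW}=1$ whenever both are non-zero is exactly the intended argument.
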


This follows easily from Theorem 1, Theorem 2 and the previous corollary.
Note that the hypothesis that $M,N$ and $P$ are indecomposable is necessary.
For example we have $\dim \B_{\bW}(X\times X,X)=4$ if
$X=\overline{A}\oplus \nc$.

\appendix

\section{Complete expression for $\E_{\delta_1,\delta_2,\gamma}(x,y)$}

Recall that $\E_{\delta_1,\delta_2,\gamma}=\det\M$, where $\M$ is a $6\times 6$ matrix whose
entries are polynomials of degree $2$ in $x,y,\delta_1,\delta_2$ and $\gamma$. 
The appendix provides the explicit formula for $\E$.

It is quite obvious that 
$\E_{\delta_1,\delta_2,\gamma}(x,y)=\E_{\delta_1,\delta_2,\gamma}(-x-7,-y+7)$.
Thus, in order to provide more compact formulas, it is better to
list the polynomials $\tilde{q}_{i,j}$ defined by

\noindent \centerline{$\E_{\delta_1,\delta_2,\gamma}(x-7/2,y+7/2)=C(\delta_1,\delta_2,\gamma)
\sum_{i,j}\tilde{q}_{i,j}(\delta_1,\delta_2,\gamma)
x^iy^j$.}

\noindent The polynomials $\tilde{q}_{i,j}$ are calculated with MAPLE. 
It turns out that the only  non-zero polynomials are 
$\tilde{q}_{0,0}, \tilde{q}_{0,2},\tilde{q}_{1,1},\tilde{q}_{2,0},
\tilde{q}_{1,3},\tilde{q}_{2,2}$ and $\tilde{q}_{3,1}$.
It follows that $\E_{\delta_1,\delta_2,\gamma}$ is a polynomial of
degree four in $x$ and $y$ and therefore
$\tilde{q}_{1,3}=q_{1,3}$, $\tilde{q}_{3,1}=q_{3,1}$ and $\tilde{q}_{2,2}=q_{2,2}$,
where the polynomials $q_{1,3}$, $q_{3,1}$ and $q_{2,2}$ are given in
Section \ref{sect_poly-pij}. The other non-zero polynomials $\tilde q_{i,j}$ are given by
the following formulas:

\begin{align*}
16\tilde{q}_{0,0}=
&[(4\delta_1+1)(4\delta_2+1)\gamma(1-\gamma)+16(\delta_1^2+\delta_2^2-\delta_1\delta_2)\delta_1\delta_2 \\
&\phantom{(}+4(\delta_1^2+\delta_2^2-3\delta_1\delta_2(\delta_1+\delta_2))
+(13(\delta_1^2+\delta_2^2)-50\delta_1\delta_2)+11(\delta_1+\delta_2)+2] \\
\times
&[(4\delta_1+1)(4\delta_2+1)\gamma(1-\gamma)++16(\delta_1^2+\delta_2^2-\delta_1\delta_2)\delta_1\delta_2 \\
&\phantom{(}+4(\delta_1^2+\delta_2^2)(\delta_1+\delta_2)
-3(\delta_1^2+\delta_2^2+6\delta_1\delta_2)-7(\delta_1+\delta_2)+6],
\end{align*}

\vspace{-0.3 in}
\begin{align*}
-4\tilde{q}_{0,2}=
&(4\delta_1+1)^2\gamma^2(1-\gamma)^2 \\
+&2(4\delta_1+1)((4\delta_1+1)\delta_2^2-2(4\delta_1+1)(\delta_1+1)\delta_2+4\delta_1^3+5\delta_1^2+2\delta_1+4)\gamma(1-\gamma) \\
+&(4\delta_1+1)^2\delta_2^4-4(4\delta_1+1)^2(\delta_1+1)\delta_2^3
+(32\delta_1^4+112\delta_1^3+142\delta_1^2+52\delta_1-13)\delta_2^2 \\
-
&(64\delta_1^5+32\delta_1^4-92\delta_1^3+68\delta_1^2+82\delta_1-4)\delta_2 \\
-
&(4\delta_1+1)(\delta_1-1)(\delta_1+1)(\delta_1+2)(4\delta_1^2+\delta_1-6),
\end{align*}
\vspace{-0.3 in}

\vspace{-0.3 in}
\begin{align*}
-4\tilde{q}_{2,0}=
&(4\delta_2+1)^2\gamma^2(1-\gamma)^2 \\
+&2(4\delta_2+1)((4\delta_2+1)\delta_1^2-2(4\delta_2+1)(\delta_2+1)\delta_1+4\delta_2^3+5\delta_2^2+2\delta_2+4)\gamma(1-\gamma) \\
+&(4\delta_2+1)^2\delta_1^4-4(4\delta_2+1)^2(\delta_2+1)\delta_1^3
+(32\delta_2^4+112\delta_2^3+142\delta_2^2+52\delta_2-13)\delta_1^2 \\
-
&(64\delta_2^5+32\delta_2^4-92\delta_2^3+68\delta_2^2+82\delta_2-4)\delta_1 \\
-
&(4\delta_2+1)(\delta_2-1)(\delta_2+1)(\delta_2+2)(4\delta_2^2+\delta_2-6),
\end{align*}

\begin{align*}
\frac{1}{2}\tilde{q}_{1,1}=
&(28\delta_1^2\delta_2^2-4(\delta_1+\delta_2)\delta_1\delta_2+(\delta_1^2+\delta_2^2)-20\delta_1\delta_2-(\delta_1+\delta_2))\gamma(1-\gamma) \\
&+4(7(\delta_1^2+\delta_2^2)-10\delta_1\delta_2)(\delta_1\delta_2)^2
-4(\delta_1^3+\delta_2^3)\delta_1\delta_2 \\
&+(\delta_1^4+\delta_2^4-12(\delta_1^2+\delta_2^2)\delta_1\delta_2-6(\delta_1\delta_2)^2)\\
&+2(\delta_1^2+\delta_1\delta_2+\delta_2^2)(\delta_1+\delta_2)
-(\delta_1^2+\delta_2^2-14\delta_1\delta_2)-2(\delta_1+\delta_2).
\end{align*}

\newpage
{\it Authors addresses:}

Universit\'e Claude Bernard Lyon 1,

Institut Camille Jordan, UMR 5028 du CNRS

43, bd du 11 novembre 1918

69622 Villeurbanne Cedex

FRANCE

{\it Email addresses:}

iohara@math.univ-lyon1.fr

mathieu@math.univ-lyon1.fr

\end{document}